\newtheorem{theorem}[equation]{Theorem}
 \newtheorem{corollary}[equation]{Corollary}
 \newtheorem{lemma}[equation]{Lemma}
 \newtheorem{proposition}[equation]{Proposition}
 \newtheorem{definition}[equation]{Definition}
 \newtheorem{remark}{\emph{Remark}}
 \newtheorem{example}{\emph{Example}}
\newcommand{\mix}{\textsc{Mix}}
\newcommand{\MLL}{\textbf{MLL}\xspace}
\newcommand{\NP}{\textbf{NP}}
\newcommand{\MLLminus}{\textbf{MLL$^-$}\xspace}
\newcommand{\MLLplusmix}{\MLL+\mix}
\newcommand{\MLLmix}{\MLLplusmix}
\newcommand{\Par}{\mathbin{\bindnasrepma}}
 \newcommand{\rk}{\mathrm{rk}}
\newcommand{\finset}[1]{\{ #1 \}} 
\newcommand{\LK}{\textbf{LK}\xspace}
\newcommand{\W}{\mathsf{W}}
\newcommand{\ox}{\otimes}
\newcommand{\cut}{\mathit{cut}}
\newcommand{\C}{\textsc{C}}
\newcommand{\G}{\mathrm{\Gamma}}
\newcommand{\D}{\mathrm{\Delta}}
\newcommand{\Ax}{\textsc{Ax}}
\newcommand{\Cut}{\textsc{Cut}}
\newcommand{\turnstile}{\vdash}
\title{Canonical proof nets for classical logic}
\author{Richard McKinley}
\begin{document}
\newcommand{\str}{\mathrm{str}}
\newcommand{\var}{}
\newcommand{\LKvar}{\LK^*}
\newcommand{\LKvarnet}{\LK^*_\mathrm{net}}
\newcommand{\Mp}{\textbf{Mp}}
\newcommand{\LKED}{\LK_{ed}}
\newcommand{\LKE}{\LKvar_e}

\newcommand{\meld}{\mathsf{M}}


\maketitle
\begin{abstract}
  Proof nets provide abstract counterparts to sequent proofs modulo
  rule permutations; the idea being that if two proofs have the same
  underlying proof-net, they are in essence the same proof.  Providing
  a convincing proof-net counterpart to proofs in the classical
  sequent calculus is thus an important step in understanding
  classical sequent calculus proofs. By convincing, we mean that (a)
  there should be a canonical function from sequent proofs to proof
  nets, (b) it should be possible to check the correctness of a net in
  polynomial time, (c) every correct net should be obtainable from a
  sequent calculus proof, and (d) there should be a cut-elimination
  procedure which preserves correctness.

  Previous attempts to give proof-net-like objects for propositional
  classical logic have failed at least one of the above conditions.
  In \cite{McK10exnets}, the author presented a calculus of proof nets
  (expansion nets) satisfying (a) and (b); the paper defined a sequent
  calculus corresponding to expansion nets but gave no explicit
  demonstration of (c).  That sequent calculus, called $\LKvar$ in
  this paper, is a novel one-sided sequent calculus with both
  additively and multiplicatively formulated disjunction rules.  In
  this paper (a self-contained extended version of \cite{McK10exnets}) , we give a full proof of (c) for expansion nets with
  respect to $\LKvar$, and in addition give a cut-elimination
  procedure internal to expansion nets -- this makes expansion nets
  the first notion of proof-net for classical logic satisfying all
  four criteria.
\end{abstract}
\section{Introduction}
\label{sec:introduction}
\emph{Proof theory}, the study of formal proofs, was invented as a
tool to study the consistency of mathematical theories, one of
Hilbert's famous 23 problems.  However, Hilbert had originally
considered presenting at his Paris lecture a 24th problem~\cite{Thiele01hilbert}
which concerned proofs directly: he proposed ``develop(ing) a theory
of mathematical proof in general''.  Central to this question is the
idea that usual proofs, as written down by mathematicians, or
formalized in, for example, Gentzen's sequent calculus~\cite{Gentzen34}, are syntactic representations of much more abstract proof
objects. Given that, we should be able to tell when two syntactic
proofs represent the same abstract proof.

It is striking how difficult this question seems to be, even for
propositional classical logic.  In contrast to the well-developed
theory of proof-identity for intuitionistic natural deduction (given
by interpretation of proofs in a cartesian-closed category), the
theory of identity for proofs in classical logic is very poorly
understood.  Investigations by several researchers over the last ten
years~\cite{Rob03ProNetCla,Fuhr06OrdEnr,LamStra05NamProCla,LamStr05ConsFreeBoo,Beletal06CatProThe,DJDHughes06PWS}
have only served to underline the difficulty of the problem.  Many of
these difficulties concern proofs with cuts. The identity of
non-analytic proofs is not problematic for intuitionistic logic; since
each proof has a unique normal form, he problem reduces to that of the
identity of normal proofs. Reduction to normal form in the classical
sequent calculus is in general neither confluent nor strongly
normalizing, and so the identity of proofs containing cuts must also
be considered.

Yet even for cut-free proofs, opinions on the ``right notion'' of
proof-identity differ. It is not reasonable, as it is for natural
deduction proofs, to declare two cut-free sequent proofs equal only if
they are syntactically identical; a good minimum notion of equality is
that proofs differing by \emph{commuting conversions} of
non-interfering sequent rules should be equal.
Proof-nets~\cite{Gir96ProNetPar} are a tool for providing canonical
representants of such equivalence classes of proofs in linear
logic~\cite{Girard87Linear}. A proposal by
Robinson~\cite{Rob03ProNetCla}, following ideas from
Girard~\cite{Girard91NewCon}, gives proof-nets for propositional
classical logic, and these nets do indeed identify proofs differing by
commutative conversions. However, they fail to provide canonical
representants for sequent proofs owing to the presence of
\emph{weakening attachments}; explicit information about the context
of a weakening not present in sequent proofs.  As a result one sequent
proof corresponds to many different nets, the exact opposite of the
situation one expects.  In addition, the proof-identities induced by
Robinson's nets do not include, among other desirable equations,
commutativity/associativity of contraction, a key assumption in the
development of abstract models of proofs (such identities are assumed
in \cite{Fuhr06OrdEnr}, in \cite{Beletal06CatProThe} and also in
\cite{LamStr05ConsFreeBoo}).  Other notions of abstract proof for
classical logic (Combinatorial proofs \cite{DJDHughes06TowHilbProb}
and $\mathbb{B}$/$\mathbb{N}$-nets \cite{LamStra05NamProCla}) make
such identifications, but at the cost of losing sequentialization into
a sequent calculus.

The current paper concerns \emph{expansion-nets}: a calculus of
proof-nets for classical logic first presented in~\cite{McK10exnets}
which, unlike Robinson's nets, provide canonical representants of
equivalence classes of classical sequent proofs. 
To avoid the problems inherent in weakening, we restrict attention to
proofs in a new sequent calculus, $\LKvar$ (see
Figure~\ref{fig:LKvar}).  This calculus has no weakening rule, nor
does it have implicit weakening at the axioms: instead, it has both
the multiplicative and additive forms of disjunction rule.  This new
calculus has all the properties one might hope of a sequent calculus
for classical logic (except, perhaps, terminating proof search): it
has the subformula property, is cut-free complete, and even has
syntactic cut-elimination (although this is perhaps easier to see via
the proof nets than directly in the sequent calculus, owing to the
curious nature of the cut-elimination theorem: if $\G$ is provable in
$\LKvar$ with cut, then some subsequent $\D \subseteq \G$ is provable
without cut). Treating the introduction of weak formulae in this way
allows us to define a canonical function mapping sequent proofs in $\LKvar$ to
expansion nets. 
 Correctness for
expansion nets (whether a net really corresponds to a sequent proof) can
be checked in polynomial time, using small adaptations of standard
methods from the theory of proof nets for \MLLminus + \mix\ (multiplicative linear
logic, plus the mix rule, without units, as studied in
\cite{Bel97SubMix,Dan90Thesis,LinLogPrimer}) -- meaning that
expansion-nets form a \emph{propositional proof
  system}~\cite{CookReckhow79PropProo}.  Translating from sequent
proofs to expansion nets identifies, in addition to nets differing by
commuting conversions, nets differing by the order in which
contractions are performed. The current paper (a self-contained extension of~\cite{McK10exnets})
 gives a detailed account of the connection between expansion-nets
and their associated sequent calculus: in particular, an
explicit proof of sequentialization for expansion
nets as(Theorem~\ref{thm:seq}), which was missing in~~\cite{McK10exnets}.
In addition, we present a cut-elimination procedure for expansion nets
(proof transformations which we prove, in Propositions
\ref{def:logcutandor} -- \ref{def:structcutdefweak} to preserve
correctness) which are weakly normalizing
(Lemma~\ref{lem:prinlem} and Theorem~\ref{thm:cut-elim} detail a
strategy for reducing any net with cuts to a cut-free net).
This result was absent from~\cite{McK10exnets}: with it, we can see that 
expansion nets have polynomial-time proof checking,
sequentialization into a sequent calculus and cut-elimination
preserving sequent-calculus correctness -- the first notion of abstract proof for
propositional classical logic to satisfy all of these properties.

\subsection{Structure of the paper}
\label{sec:structure-paper}
Section~\ref{sec:preliminaries} gives some preliminaries, and then
Section~\ref{sec:vari-sequ-calc} introduces the variant sequent
calculus $\LKvar$, showing completeness and some other key properties.
Section \ref{sec:exist-noti-proof} surveys the existing notions of
abstract proof in propositional classical logic. 
Section~\ref{sec:expansion-nets} defines expansion nets, and
compares them with the existing notions of abstract proof in the literature.

The next two chapters contain most of the novel technical material in
the paper.  Section~\ref{sec:subnets} deals with the notion of
\emph{subnet}, a key analogue of the notion of subproof in sequent
calculus which we will need to define cut-reduction.  This technology
(including the new notion of \emph{contiguous empire}) also affords a
proof of sequentialization of expansion-nets into $\LKvar$.
Section~\ref{sec:cut-elim-expans} then provides the cut-reduction
steps themselves, and a proof of cut-elimination for expansion nets.

\subsubsection{Acknowledgements}
\label{sec:acknowledgements}
The author thanks Kai Br\"unnler, Lutz Strassburger, Michel Parigot,
Tom Gundersen, and the anonymous referees for their helpful comments
and criticisms.

\begin{figure}
  \centering
  \noindent\hrulefill
  \[
  \begin{array}{cccc}

  & \begin{prooftree}
    \justifies  a, \ \bar{a}
    \using \Ax
  \end{prooftree} &\quad
  \begin{prooftree}
    \justifies  \top
    \using \Ax_\top
  \end{prooftree}
  \\[2em]
\begin{prooftree}
   \G, \ A \justifies \G, A \lor B
     \using \lor_{0}
\end{prooftree}
   \quad & \quad \begin{prooftree}
   \G, \ A,  B \justifies \G, A \lor B
     \using \lor
  \end{prooftree} \quad & \quad \begin{prooftree}
   \G, \ B \justifies \G,  A \lor B
     \using \lor_{1} \end{prooftree}
&  \quad \begin{prooftree}
    \G, A \qquad \D, B \justifies 
    \G, \D, \ A \land B \using \land
  \end{prooftree}
  \\[2em]
  \begin{prooftree}
    \G \qquad \D \justifies 
    \G, \D \using \mix
\end{prooftree}&\quad \begin{prooftree}
   \G,\ a,  a \justifies \G, a
    \using \C
  \end{prooftree}
& \quad
  \begin{prooftree}
   \G,\ \bar{a},  \bar{a} \justifies \G, \bar{a}
    \using \C
  \end{prooftree} & \quad\begin{prooftree}
   \G,\ A\land B,  A\land B \justifies \G, A\land B
    \using \C
  \end{prooftree}
  \\
  \end{array}
  \]
  \noindent\hrulefill
  \caption{$\LKvar$: A variant sequent calculus without weakening}
  \label{fig:LKvar}
\end{figure}

\section{Preliminaries}
\label{sec:preliminaries}
\subsection{Formulae of propositional classical logic}
\label{sec:form-line-class}

  Let $\mathcal{P}$ be a countable set of \emph{proposition symbols}. An
  \emph{atom} is a pair $(a, i)$, where $a \in \mathcal{P}$ and $i \in
  \finset{+, -}$.  By an abuse of notation, but in line with common
  use, we will simply write $a$ for $(a, +)$, and write $\bar{a}$ for
  $(a,-)$.  Two atoms are \emph{dual} if they differ only in their
  second component.



The classical formulae over $\mathcal{P}$ are given by the following
grammar

\[ A ::= a \ | \ \bar{a} \ | \ \top \ | \ \bot \ | \  A \land A  \ | \
A \lor A. \]

\newcommand{\rank}{\mathrm{rk}}
Negation is not a connective in our systems, but is defined by
De~Morgan duality.  We will use the notation $\bar{A}$ to denote the
De~Morgan dual of the formula $A$. The \emph{rank} $\rk(A)$ of a
formula $A$ is
defined as follows: 
\[ \rk (\top) = \rk(\bot)= \rk(a) = \rk(\bar{a}) = 1 \] 
\[ \rk(A\land
B) = \rk (A \lor B) = 1+ \mathrm{max}(\rk(A), \rk(B))\]

\newcommand{\pr}{\mathrm{pr}}

\subsection{Forests and sequents}
\label{sec:forests-sequents}

  A \emph{forest} (in this paper) is a pair $(A, \pr)$ consisting of a
  set $A$ of nodes and a partial endofunction $\pr$ (predecessor) on
  $A$ (the elements of $A$ on which $\pr$ is undefined being the
  \emph{roots}) such that, for each element $x$ of $A$, there is an $n
  \geq 0$ such that $\pr^n(x)$ is a root.  Clearly, a forest with one
  root is a tree.  Given a $y$ such that $\pr(x)=y$, we will say that
  $x$ is a \emph{successor} of $y$.  A node with no successors is a
  \emph{leaf}.  A node $x$ in a forest is \emph{ordered} if it comes
  equipped with an injective function from its set of successors to
  $\mathbb{N}$ --- otherwise it is \emph{unordered}. 

A forest defines a natural partial order $\leq$ on its nodes derived
from predecessor: $x \leq y$ if there exists $n \geq 0$ with $x =
\pr^n (y).$ A forest also gives rise to a directed graph (the graph of
the forest) with nodes the same as the nodes of the forest, and a
directed edge from every node to its predecessor.

A \emph{subforest} of $F$ is a nonempty set
  $G$ of nodes of $F$ such that if $g_1$ is a member of $G$
  and $g_1\leq g_2$ then $g_2$ is a member of $G$.


  Given that a formula is a tree, it is natural to consider a sequent
  to be a forest: a \emph{classical sequent} will be, for us, a finite
  forest whose trees are classical propositional formulae.

\begin{remark}
  Sequents are typically defined either as sets, multisets or
  sequences of formulae: why then have we chosen to define sequents as
  forests?  For an fine-grained analysis of proofs, sets are a bad
  representation, as they throw away all explicit information about
  contraction.  Sequences, on the other hand, distinguish too much;
  what we need is a representation which allows us to distinguish
  individual occurrences of the same formula in a sequent without
  caring in which order they appear.  The problem with the multiset
  representation of sequents lies in confusion over the meaning of
  ``multiset'', which is different depending on context, and in
  essential ways.  In particular, problems arise for structural proof
  theory if the intended meaning of multiset is ``set with
  multiplicities''.  Suppose that from $\bar{A}, A$ we
  derive $\bar{A}, A, A$ by weakening. If we wish to form a cut
  against $A$, we must choose which copy of $A$ to cut against: the
  choice will have drastic consequences during cut-elimination.  But
  in the ``set with multiplicities'' understanding of multisets, there
  is no notion of an individual copy of $A$ in the sequent. 

  By defining a sequent to be a forest, we avoid this conceptual
  hurdle: each formula in the sequent corresponds to a distinct root
  of the forest.  When we want to think about sequents as multisets to
  make sense, for example, of the expression $\Delta \subseteq \Gamma$
  (``$\Delta$ is a \emph{subsequent} of $\Gamma$\ ''), we can use the
  \emph{set} of roots of the sequent (the above expression is
  interpreted as ``$\Delta$ is a subforest of $\Gamma$, each of whose
  roots is a root of $\Gamma$\ '').
\end{remark}

We write sequent proofs without turnstiles: if $\mathbf{L}$ is a
sequent system, we write $\mathbf{L} \turnstile \G$ to mean ``there is
a sequent derivation in $\mathbf{L}$ with $\G$ at the root and axioms
at the leaves.

\begin{figure}
  \centering
  \noindent\hrulefill
  \[
  \begin{array}{ccc}
  \begin{prooftree}

    \justifies  a, \ \bar{a}
    \using \Ax
  \end{prooftree} &\quad
  & 
  \begin{prooftree}
    \justifies  \top
    \using \Ax_\top
  \end{prooftree}
  \\[2em]
   & \begin{prooftree}
   \G, \ A,  B \justifies \G, A \lor B
     \using \lor
  \end{prooftree}   \qquad \qquad
  \begin{prooftree}
    \G, A \qquad \D, B \justifies 
    \G, \D, \ A \land B \using \land
  \end{prooftree}
  \\[2em]
  \begin{prooftree}
   \G,\ A,  A \justifies \G, A
    \using \C
  \end{prooftree} &

&
  \begin{prooftree}
  \G \justifies \G, B \using \W
  \end{prooftree}
  \\
  \end{array}
  \]
  \noindent\hrulefill
  \caption{Cut-free multiplicative $\LK$ (one-sided)}
  \label{fig:LK}
\end{figure}
\section{A variant sequent calculus for classical logic}
\label{sec:vari-sequ-calc}
The completeness of expansion nets relies on the completeness of a
variant sequent calculus $\LKvar$ (shown in Figure~\ref{fig:LKvar}).
This sequent calculus was introduced, along with expansion-nets,
in~\cite{McK10exnets}.  The calculus bears some similarities to
Hughes's ``minimal calculus'' \Mp~\cite{DJDHughes:hybrid}, in that it
has both multiplicatively and additively formulated disjunction rules.
However, while \Mp\ has a mixed additive/multiplicative conjunction
rule, $\LKvar$ has the standard multiplicative conjunction rule. Given
these logical rules, we need the contraction rule (which is absent
from \Mp) to be complete with respect to classical logic.  This would
ordinarily make the multiplicative disjunction rule redundant, as it
is derivable from the two additive rules plus contraction; however, in
$\LKvar$ contraction is forbidden on disjunctions.  Contraction is,
however, admissible in $\LKvar$; we will prove this using the
following two easy lemmata:

\begin{lemma}[Pseudo-invertibility of $\lor$]
\label{lem:orinv}
  If $\LKvar \turnstile \G, A \lor B$, then one of the following holds:
  \begin{itemize}
    \item $\LKvar \turnstile \G, A, B$
    \item $\LKvar \turnstile \G, A$
    \item $\LKvar \turnstile \G, B$
  \end{itemize}
\end{lemma}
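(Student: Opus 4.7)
The plan is to proceed by induction on the height of the derivation of $\G, A \lor B$ in $\LKvar$, keeping track of the designated occurrence of the disjunction $A \lor B$, and casing on the last rule. The crucial structural observation is that in $\LKvar$ the contraction rule is permitted only on atoms and on conjunctions, so the tracked occurrence of $A \lor B$ can never be principal in any contraction; it is either introduced at this step or is passively carried through from the context of a premise.

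The base cases are the axioms $a, \bar{a}$ and $\top$, which contain no disjunction, so those cases are vacuous. For the inductive step, if the last rule is $\lor_0$, $\lor$, or $\lor_1$ introducing exactly the tracked $A \lor B$, then the premise is $\G, A$, $\G, A, B$, or $\G, B$ respectively, and we are done in the first, second, or third clause of the disjunction. If the last rule is one of these three introducing a different disjunction $C \lor D$ lying in $\G$, we apply the induction hypothesis to the premise (which still contains the tracked $A \lor B$) and then reapply the same $\lor$-rule to the resulting sequent; since this rule only manipulates the $C \lor D$ side, it commutes with the three possible conclusions given by the IH.

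For the $\land$ rule, with premises $\G_1, E$ and $\G_2, F$ and conclusion $\G_1, \G_2, E \land F$, the tracked occurrence lies in one of the two premises, say $\G_1 = \G_1', A \lor B$; we apply the IH to that premise and then reapply $\land$ with the other premise unchanged, yielding one of $\G_1', \G_2, A, B, E \land F$, $\G_1', \G_2, A, E \land F$, or $\G_1', \G_2, B, E \land F$, as required. The $\mix$ case is treated identically (and is in fact simpler, since there is no principal formula to reintroduce). Finally, if the last rule is a contraction, the contracted formula is an atom or a conjunction, hence distinct from the tracked $A \lor B$, so the tracked occurrence sits in the context; apply the IH to the premise and reapply the contraction to the context.

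I do not expect any serious obstacle here: the only real concern is that contraction on $A \lor B$ itself would break the induction (one would obtain two tracked copies from the premise and would have to combine the three-way disjunctive outcomes), but this case is ruled out by the side condition on $\C$ in $\LKvar$. The remaining cases are purely routine rule commutations.
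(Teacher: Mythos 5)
Your proof is correct: the paper states this as one of two ``easy lemmata'' and gives no proof, and your induction on derivation height, tracking the designated occurrence of $A \lor B$ and permuting the last rule, is exactly the intended argument. You have also correctly identified the one point where the lemma could fail --- a contraction whose principal formula is the tracked disjunction --- and correctly observed that $\LKvar$'s restriction of $\C$ to atoms and conjunctions rules this out, which is precisely why the lemma holds in $\LKvar$ but not in ordinary $\LK$.
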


\begin{lemma}
\label{lem:topelim}
  If $\G$ is nonempty and $\LKvar \turnstile \G, \top$, then $\LKvar \turnstile \G$.
\end{lemma}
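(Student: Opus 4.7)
The plan is to proceed by induction on the height of a derivation $\pi$ of $\G, \top$ in $\LKvar$, with case analysis on the last rule. The key observation is that $\top$ is never the principal formula of any rule of $\LKvar$ except the $\Ax_\top$ axiom: the conjunction and disjunction rules act on a logical connective that is not $\top$, the contraction rule is restricted to atoms and conjunctions, and the $\mix$ and $\Ax$ rules have no principal formula containing $\top$. Hence in every rule other than $\Ax_\top$, the distinguished root $\top$ must already appear as a root in the context of exactly one premise, and the induction hypothesis will be applied to remove it there before reassembling the derivation.

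The base cases are immediate: an instance of $\Ax_\top$ would force $\G$ to be empty, contradicting the hypothesis, while an instance of $\Ax$ carries no $\top$ root at all. For the single-premise rules $\lor_0$, $\lor_1$, $\lor$, and $\C$, the premise has the form $\G', \top$ where $\G'$ always contains at least the active subformulae of the rule, and is therefore nonempty; the induction hypothesis yields $\LKvar \turnstile \G'$, and reapplying the same rule gives $\LKvar \turnstile \G$. For the $\land$ rule with conclusion $\G'_1, \G'_2, A \land B$, the $\top$ root sits (say) in $\G'_1$ alongside $A$; removing it leaves a sequent still containing $A$, on which the induction hypothesis applies, and pairing the resulting derivation with the unchanged right premise via $\land$ reconstructs $\G$.

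The one subtle case is the $\mix$ rule, where the conclusion $\G'_1, \G'_2$ may put $\top$ in a premise that otherwise has no formulas. If $\G'_1 \setminus \{\top\}$ is nonempty, the induction hypothesis applies and a single $\mix$ with the right-hand premise reconstructs $\G$. In the degenerate case where $\G'_1 = \{\top\}$, the induction hypothesis cannot be invoked --- this is the anticipated main obstacle --- but here $\G'_2$ must equal $\G$ itself, so the existing right-hand subderivation is already a proof of $\G$. This is the only place where the nonemptiness of $\G$ is exploited, and it is precisely what prevents the argument from ever being asked to produce a proof of the empty sequent.
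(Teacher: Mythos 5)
Your proof is correct; the paper itself states this lemma without proof (it is one of the ``two easy lemmata'' used to establish admissibility of contraction), and your induction on derivation height, driven by the observation that $\top$ is principal only in $\Ax_\top$, is exactly the intended argument. You also correctly identify and dispatch the one delicate point --- the $\mix$ rule with a bare $\Ax_\top$ premise --- which is the only place the nonemptiness hypothesis on $\G$ is actually needed.
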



\begin{proposition}
Contraction is admissible in $\LKvar$.
\end{proposition}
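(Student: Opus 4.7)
The plan is to prove, by induction on $\rk(A)$, that if $\LKvar \turnstile \G, A, A$ then $\LKvar \turnstile \G, A$. The atomic cases ($A = a$ or $A = \bar a$) and the case $A = B \land C$ are immediate from the contraction rules already present in $\LKvar$. The case $A = \bot$ is vacuous: no rule of $\LKvar$ produces a sequent with $\bot$ at the root (the axioms are $a,\bar a$ and $\top$, and every logical rule introduces a principal formula of the form $A\lor B$ or $A\land B$), so no hypothesis of the form $\G,\bot,\bot$ is ever derivable. The case $A = \top$ is handled directly by Lemma~\ref{lem:topelim}: the sequent $\G,\top$ is nonempty (it contains $\top$), and $\LKvar \turnstile (\G,\top),\top$ is exactly the hypothesis, so the lemma yields $\LKvar \turnstile \G,\top$.

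The only substantive case is $A = B \lor C$. Here I would apply Lemma~\ref{lem:orinv} twice, once to each occurrence of $B \lor C$ in $\G, B\lor C, B\lor C$. Each application produces three possibilities, giving nine branches in total; in each branch the resulting sequent has the form $\G$ extended by some multiset drawn from $\{B,C\}$, with at most one letter duplicated. For example, the ``worst'' branch yields $\LKvar \turnstile \G, B, C, B, C$, and applying the inductive hypothesis to both $B$ and $C$ (each of strictly smaller rank than $B\lor C$) produces $\LKvar \turnstile \G, B, C$, after which a single use of $\lor$ gives $\LKvar \turnstile \G, B\lor C$. Branches with only a duplicated $B$ contract by IH on $B$ and then close with $\lor_0$ or $\lor$; branches with only a duplicated $C$ are symmetric; branches with no duplication (e.g.\ $\G, B, C$, $\G, B$ or $\G, C$) close immediately with $\lor$, $\lor_0$ or $\lor_1$.

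The hardest part is simply the bookkeeping for this nine-way case split: one has to verify in each branch that every residual duplication is of a formula strictly smaller in rank than $B\lor C$, so that the inductive hypothesis genuinely applies, and that the correct choice among $\lor$, $\lor_0$, $\lor_1$ is available to rebuild the disjunction. Once that check is made, the induction goes through routinely, and no further appeal to cut-elimination or to the structure of derivations (beyond Lemmas \ref{lem:orinv} and \ref{lem:topelim}) is needed.
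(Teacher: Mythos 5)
Your proof is correct and follows essentially the same route as the paper: induction on rank, handling atoms and conjunctions by the contraction rule, $\top$ by Lemma~\ref{lem:topelim}, and $A \lor B$ by (iterated) pseudo-invertibility (Lemma~\ref{lem:orinv}) followed by the inductive hypothesis on the strictly smaller disjuncts and one of the disjunction rules --- your version just makes the nine-way case split and the vacuous $\bot$ case explicit where the paper compresses them. (One trivial wording slip: ``at most one letter duplicated'' is contradicted by your own worst branch $\G, B, C, B, C$, but you handle that branch correctly anyway.)
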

\begin{proof}
  Contraction is admissible for $\top$ by Lemma~\ref{lem:topelim},
  and for atoms/conjunctions by the contraction rule.  Now suppose
  that contraction is admissible for all formulae of rank $< n$, and let
  $A\lor B$ have rank $n$.  Given a proof of $\G, A \lor B, A\lor B$,
  apply pseudo invertibility (Lemma~\ref{lem:orinv}) to obtain a proof
  of $\G, A^{(n)}, B^{(m)}$, (here $C^{(n)}$ denotes $n$ copies of
  the formula $C$) where $0 \leq m,n \leq 2$ and $n+m \geq 2$.
  Using a
  combination of the induction hypothesis and one of the disjunction
  rules of $\LKvar$ we obtain a proof of $\G, A\lor B$.\qed
\end{proof}
In common with \Mp, $\LKvar$ has the curious property of being sound
and complete for formulae ($\turnstile A$ iff $\vDash A$) but not
``sequent complete'': that is, there are sequents provable in $\LK$
which cannot be proved in the variant system.  For example, if $a$ and
$b$ are distinct propositional letters, then $a, \bar{a}, b$ does not
have a proof in $\LKvar$.  For this reason, our proof of completeness
proceeds by showing that each $\LK$-provable sequent has an
$\LKvar$-provable subsequent:

\begin{proposition}
  Let $\G$ be provable in $\LK$ (we take as $\LK$ the system in
  Figure~\ref{fig:LK}).  Then we may partition the formulae in $\G$
  (in terms of forests, the roots of $\G$) into $\G_s$ (the
  \emph{strong} formulae of $\G$) and $\G_w$ (the \emph{weak} formulae
  of $\G$), such that $\LKvar$ proves $\G_s$.
\end{proposition}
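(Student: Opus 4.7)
The plan is to proceed by structural induction on the $\LK$ derivation. At each subderivation, the induction hypothesis provides a partition of its conclusion into strong and weak formulae together with an $\LKvar$ derivation of the strong part, and we extend this rule by rule. The axiom cases are immediate: the axioms $a, \bar{a}$ and $\top$ are themselves axioms of $\LKvar$, and every formula is strong. For a weakening step $\G \vdash \G, B$, mark $B$ as weak and keep the IH derivation unchanged. For a contraction step $\G, A, A \vdash \G, A$, case-split on the IH marking of the two premise copies of $A$: if both are weak, mark the contracted $A$ weak; if exactly one is strong, the IH derivation already proves $\G_s, A$; if both are strong, apply the admissibility of contraction (just proved) to the IH derivation of $\G_s, A, A$ to obtain one of $\G_s, A$.

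For the $\lor$ rule $\G, A, B \vdash \G, A \lor B$, the three disjunction rules of $\LKvar$ are tailored to the three possible combinations of strength: use the multiplicative $\lor$ when both $A$ and $B$ are strong, $\lor_0$ when only $A$ is strong, and $\lor_1$ when only $B$ is; if both are weak then so is $A \lor B$. The $\land$ case $\G, A \qquad \D, B \vdash \G, \D, A \land B$ is the delicate one. If both premises provide strong principal formulae, apply $\LKvar$'s $\land$ rule to the IH derivations to obtain a derivation of $\G_s, \D_s, A \land B$ in which all three pieces are strong. Otherwise, without loss of generality $A$ is weak in the left premise; the left IH then supplies an $\LKvar$ derivation of $\G_s$, which is already a subsequent of the conclusion, so we take it as our derivation and mark all of $\D$, $\G_w$ and $A \land B$ weak. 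The right IH must be abandoned in this subcase, because the formula $B$ in its strong part is not a root of the conclusion (only $A \land B$ is).

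The main obstacle is the $\land$ case, specifically the need to discard one branch entirely when a principal formula is weak; this is precisely what prevents the stronger statement (sequent-completeness of $\LKvar$) from holding, as witnessed by the counterexample $a, \bar{a}, b$ mentioned above. A secondary subtlety is the contraction case, which relies crucially on the admissibility result just established (and hence ultimately on Lemmata~\ref{lem:orinv} and~\ref{lem:topelim}). No other case requires new ideas: the mix rule of $\LKvar$ is in fact not needed in the translation, since the conjunction case either fully combines the two branches or discards one of them outright.
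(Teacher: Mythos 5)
Your proposal is correct and follows the same induction (on the $\LK$ derivation, with the same case analysis on which premise occurrences are strong) as the paper's proof; the axiom, weakening, contraction and disjunction cases are handled identically. The one genuine divergence is the conjunction case when \emph{both} principal formulae $A$ and $B$ are weak: you discard one branch outright and keep only, say, $\G_s$, whereas the paper keeps $\G_s, \D_s$ together as the strong part of the conclusion, combining the two $\LKvar$ derivations with the $\mix$ rule. Both choices are sound, and your observation that $\mix$ is therefore dispensable is exactly the content of the remark the paper places immediately after this proposition ($\LKvar$ is formula-complete without $\mix$). What the paper's use of $\mix$ buys is symmetry — it avoids an arbitrary choice of which premise to sacrifice — and it retains a larger $\LKvar$-provable subsequent, which matters for the intended correspondence with expansion nets (where the $\mix$ rule is present and the translation should not forget one whole branch of a conjunction over two weak formulae). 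Your version proves the stated proposition just as well, but yields a smaller $\G_s$ in that subcase.
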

\begin{proof}
  By induction on the length of an $\LK$ derivation.  Clearly, the
  proposition is true for consequences of the $\LK$ axiom.  We proceed by
  case analysis on the last rule $\rho$ used in the $\LK$ derivation:
  \vspace{1em}

  \noindent \textbf{[$\rho=\W$]}\qquad The induction hypothesis gives
  us the strong formulae $\G_s$ of the premiss $\G$ of $\rho$, such
  that $\LKvar \turnstile \G_s$.  The sequent $\G_s$ is also a
  subsequent of the conclusion $\G, B$ of $\rho$, and so we may take
  it as the strong formulae of the conclusion (i.e. $B$ is a weak
  formula in the conclusion). \vspace{1em}

\noindent [$\rho = \lor$]\qquad  Let $\G, A,
B$ be the premiss of $\rho$, and $\G, A \lor B$ the conclusion.  Apply
the induction hypothesis to $\G, A, B$, yielding a sequent $\G_s$ of
strong formulae provable in $\LKvar$:
  \begin{itemize}
  \item If $A$ and $B$ are both strong, then $\G_s = \D, A, B$ is
    provable in $\LKvar$, and $\D, A\lor B$ is an $\LKvar$ provable
    subsequent of the conclusion of $\rho$.
  \item If $A$ and $B$ are both weak, then $\G_s$ is also a subsequent
    of the conclusion of $\rho$, and so we may take $\G_s$ as the
    strong formulae of the concluion of $\rho$.
  \item If $A$ is weak and $B$ is strong, then $\G_s = \D, B$, and
    thus, using $\lor_1$, $\D, A \lor B$ is an $\LKvar$ provable
    subsequent of the conclusion of $\rho$.  Symmetrically if $A$
    strong and $B$ weak.
  \end{itemize}
\vspace{1em}
\noindent [$\rho = \C$]\qquad  This is similar to the case for
disjunction, with the added twist that we must use admissible
contraction where a contraction rule is not available in $\LKvar$.
Let $\G, A, A$ be the premiss of $\rho$, and $\G, A$ the
conclusion.  Apply the induction hypothesis to $\G, A, A$, yielding a
sequent $\G_s$ of strong formulae provable in $\LKvar$:
  \begin{itemize}
  \item If both copies of $A$ are strong, then $\G_s = \D, A, A$ is
    provable in $\LKvar$, and $\D, A$ is an $\LKvar$ provable
    subsequent of the conclusion of $\rho$ by contraction
    admissibility.
  \item If both copies of $A$ are weak, then $\G_s$ is also a
    subsequent of the conclusion of $\rho$, and so we may take $\G_s$
    as the strong formulae of the conclusion of $\rho$.
  \item If one copy of $A$ is weak, then $\G_s = \D, A$ is also an $\LKvar$ provable
    subsequent of the conclusion of $\rho$. 
  \end{itemize}
\noindent[$\rho=\land$]\qquad This is the most interesting case.
Let $\G, A$, be one premiss of $\rho$ and $\D, B$ the other.  The induction hypothesis applied to both
premisses gives us a subsequents $\G_s$ and $\D_s$ of strong
formulae respectively for each premiss.
  \begin{itemize}
  \item If $A$ and $B$ are both strong in their respective sequents,
    then $\G_s = \G', A$ and $\D_s = \D', B$, and so $\G', \D', A\land
    B$, a subsequent of the conclusion of $\rho$, is provable in
    $\LKvar$.
   \item If $A$ and $B$ are both weak, then $\G_s, \D_s$ is a
     subsequent of the conclusion of $\rho$, provable in $\LKvar$
     using the $\mix$ rule. 
   \item If $A$ is weak and $B$ is strong, then $\G_s$ does not
     contain $A$, and is therefore a subsequent of $\G, \D, A \land B$
     provable in $\LKvar$. Symmetrically if $A$ strong and $B$ weak.
  \end{itemize}
\end{proof}

\begin{remark}
  $\LKvar$ is also formula complete without the $\mix$ rule; we only use
  $\mix\ $ in the completeness argument once, where a conjunction is
  applied to two weak formulae; the $\mix$ rule allows us to translate
  this derivation into $\LKvar$ in a symmetric manner.  Without
  $\mix$, we would be forced to choose one or other of the premisses
  as the strong formulae of the conclusion.
\end{remark}

\section{Existing notions of proof-net for classical logic}
\label{sec:exist-noti-proof}
To underline the need for a new notion of proof-net, we consider the
existing notions of proof-net for classical logic, and underline their
strengths and weaknesses as canonical representatives of equivalence
classes of proofs.  
\subsection{Na\"ive classical nets}
\label{sec:naive-classical-nets}
The basic idea for a rudimentary form of classical proof-net comes
from Girard~\cite{Girard91NewCon}, and the details were first worked
out by Robinson in \cite{Rob03ProNetCla}: the underlying structure of
the nets is identical to that for \MLL nets, and correctness is given
by treating the conjunctions and axioms of classical logic in the same
way as the linear logic axiom and tensor,treating both contraction and
disjunction in the same way as the linear logic ``par'' connective,
and treating weakenings as $\bot$ is treated in \MLL nets.

\begin{remark}
  The following presentation of classical nets differs from that of
  Robinson, in that we work with one-sided proofs, and we use
  weakening attachments for correctness rather than explicit weakening
  nodes.  Since these nets represent the most basic idea for
  developing \MLL nets into nets for classical logic, and since they
  lack many of the properties we would desire of proof-objects for
  classical logic, we call them \emph{na\"ive classical nets}.
\end{remark}
\begin{figure}[t]
  \noindent\hrulefill
  \center
  \begin{tikzpicture}[level distance=6.5mm, sibling distance=20mm, edge
    from parent path= {[<-](\tikzparentnode) to (\tikzchildnode)}]]
    \matrix[row sep= 0.5cm,column sep= 0.5cm]
    {
      
      \node {$\top$}[grow=up] child {node {$1$}};& &\node{$\Ax$}[grow
      = down, edge from parent path= {[->](\tikzparentnode) to
        (\tikzchildnode)}] child {node {$a$}} child {node {$\bar{a}$}};& &
      \node {$A$}[grow=up] child {node {Wk}};\\
      \node {$A \land B$}[grow=up] child {node (plus) {$\land$} child
        {node {$B$}} child {node {$A$}} };&& \node {$A$}[grow=up] child {node (plus) {Ctr} child {node {$A$}}
        child {node {$A$}} };  && \node {$A \lor B$}[grow=up]
      child {node (plus) {$\lor$} child
        {node {$B$}} child {node {$A$}} };\\

      \\
    };
  \end{tikzpicture}

  \noindent\hrulefill
  \caption{Na\"ive classical nets: graph figures}
  \label{fig:robnets}
\end{figure}
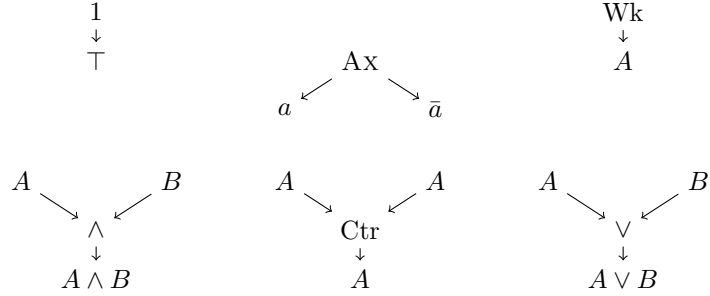

A graph-like presentation of na\"ive classical nets can be found in
Figure~\ref{fig:robnets}: a na\"ive classical proof-structure is a
graph built from the individual graph elements by matching types, such
that the resulting graph has no sources (nodes with no incoming edges)
labelled with formulae.  There is an inductive definition mapping
sequent-proofs in \LK to proof-structures, which can be very easily
obtained by considering proof-structures not as graphs, but as forests
of trees:

 \begin{definition}
  Let $\mathcal{X}$ be a countable set of \emph{wire symbols}. A
  \emph{wire variable} is an atom over $\mathcal{X}$, as defined in
  Section \ref{sec:form-line-class}: a pair of a member $x$ of
  $\mathcal{X}$ and a polarity ( $+$ or $-$).  Thus wire variables
  occur in dual pairs, for example $x$ and $\bar{x}$.  A
  \emph{contraction-weakening tree} (or \emph{cw-tree}) over
  $\mathcal{X}$ is a member of the following grammar.
  \[ s:: = 1 \ | \ \mathrm{Wk} \ | \   x \ | \ \bar{x} \ | \ (s \lor s) \ | \ (s
  \land s)  \ | \ \mathrm{Ctr}(s, s)\] 
  where $x$ and $\bar{x}$ are wire variables over $\mathcal{X}$.
\end{definition}
We use these cw-trees to define the mapping from sequent proofs to
proof structures, by \emph{annotating} formulae appearing in \LK
derivations with cw-trees. The system in Figure~\ref{fig:LKannot}
derives sequents of ``annotated formulae'', in which each formula has
an associated $cw$-tree: the tree attached to a formula provides a
history of how it was proved.
\begin{figure}
  \centering
  \noindent\hrulefill
  \[
  \begin{array}{ccc}
  \begin{prooftree}
    \justifies  x: a, \ \bar{x}: \bar{a}
    \using \Ax
  \end{prooftree} &
  \begin{prooftree}
    F \qquad G
    \justifies
    F, G
    \using \mix
  \end{prooftree}
  & 
  \begin{prooftree}
    \justifies  1:\top
    \using \Ax_\top
  \end{prooftree}
  \\[2em]
   & \begin{prooftree}
   G, \ t:A, \ s: B \justifies G, s \lor t : A \lor B
     \using \lor
  \end{prooftree}   \qquad
  \begin{prooftree}
    G,\ s: A \qquad F, \ t: B \justifies 
    G, F, \ s \land t: A \land B \using \land
  \end{prooftree}
  \\[2em]
  \begin{prooftree}
   G,\ s: A,\  t: A \justifies G, \ \mathrm{Ctr}(s,t): A
    \using \C
  \end{prooftree} &

&
  \begin{prooftree}
  G \justifies G, \ \mathrm{Wk}: B \using \W
  \end{prooftree}
  \\
  \end{array}
  \]
  \noindent\hrulefill
  \caption{$\LK_\mathrm{net}$: annotating $\LK$ plus $\mix$ with a
    na\"ive form of classical proof-net}
  \label{fig:LKannot}
\end{figure}

We can recover the more usual graph-like presentation of proof
structures by considering the \emph{graph} of an annotated sequent,
given by adding axiom links to the forest of cw-terms as suggested by
the dual wire variables.
\begin{example}
  The following annotated sequent represents a proof of Pierce's law
  \begin{equation}
((\var{\bar{x}} \lor \mathrm{Wk} ) \land \var{\bar{y}}):(\bar{p} \lor
q)\land \bar{p}, \quad  \mathrm{Ctr}(\var{x},
  \var{y}): p
\end{equation}
The graph of this annotated sequent is 
\begin{equation}
  \begin{tikzpicture}[level distance=8mm, grow=up, sibling
    distance=15mm, edge from parent path= {[<-](\tikzparentnode) to
      (\tikzchildnode)}]] 
    \node at (-2, 0) {$(\bar{p} \lor q)\land \bar{p}$} 
      child {node{$\land$} 
        child [level distance=6mm]{node (bary){$\bar{y}:\bar{p}$}}
        child [level distance=6mm]{node{$\lor$} 
          child [level distance=6mm]{node {$\mathrm{Wk}:\bar{q}$}}
          child [level distance=6mm]{node (barx){$\bar{x}:\bar{p}$}}}
      };
    \node at (2,0) {$p$} 
    child {node {$\mathrm{Ctr}$} 
      child {node (y) {$\var{y}:p$}} 
      child {node (x) {$\var{x}:p$}}
    };
    \draw (x) [in=30, out=150] to (barx);
    \draw (y) [in=30, out=150] to (bary);
\end{tikzpicture}\label{eq:1}
\end{equation}

\end{example}

To obtain a correctness criterion, it is necessary to
anchor each weakening to some other node of the proof.
In~\cite{Rob03ProNetCla} this anchoring is part of the structure of
the weakening node: we instead use the more usual notion of an
\emph{attachment}
\begin{definition}
  An \emph{attachment} $f$ for a na\"ive classical proof structure $F$
  is a function mapping each rule node labelled with Wk to some other
  rule-node of the proof-structure.  By an attached proof structure,
  we mean a pair $(F,f)$ of a proof structure $F$ and an attachment
  $f$ for $F$.
\end{definition}
\begin{example}
Below we see two different attachments of the same proof structure,
represented by the grey arrows:
\begin{equation}
\label{eq:2}
  \begin{tikzpicture}[level distance=6mm, grow=up, sibling
    distance=10mm, edge from parent path= {[<-](\tikzparentnode) to
      (\tikzchildnode)}]] 
    \node at (-1, 0) {$\top \lor \top$} 
    child {node {$\lor$}  
        child {node {$\top$} child {node (X1) {$1$}}}
        child {node {$\top$} child {node (Y1) {$1$}}}};
\node at (1, 0) {$\bot \land \bot$} 
    child {node {$\land$}  
        child {node {$\bot$} child {node (X2) {Wk}}}
        child {node {$\bot$} child {node (Y2) {Wk}}}};
\draw [->, gray, in=30, out=150] (X2) to (X1) ;
\draw [->, gray, in=30, out=150] (Y2) to (Y1) ;

    \node at (3, 0) {$\top \lor \top$} 
    child {node {$\lor$}  
        child {node {$\top$} child {node (X1) {$1$}}}
        child {node {$\top$} child {node (Y1) {$1$}}}};
\node at (5, 0) {$\bot \land \bot$} 
    child {node {$\land$}  
        child {node {$\bot$} child {node (X2) {Wk}}}
        child {node {$\bot$} child {node (Y2) {Wk}}}};
\draw [->, gray, in=30, out=150] (X2) to (Y1) ;
\draw [->, gray, in=30, out=150] (Y2) to (X1) ;
\draw (2,2) to (2,0) ;
  \end{tikzpicture}
\end{equation}
\end{example}

The annotated sequent calculus in Figure~\ref{fig:LKannot} provides a
function from LK proofs to proof structures.  To extend this to
\emph{attached} proof structures, we must give an attachment for each
weakening in the sequent proof.  We may choose any one of the formulae
present in the context of the weakening rule; this arbitrary choice
means that attached proof-nets themselves cannot be the canonical
proof objects we seek.  For \MLL, the right notion of canonical proof
object is a \emph{quotient} of attached proof-nets by so-called
\emph{Trimble rewiring}~\cite{Trim94Thesis}, whereby two proof-nets
are equivalent if they can be transformed into one another by several
steps of ``rewiring'' a single unit: a rewiring is a change of
attachment for the unit which yields a correct net.  According to
Trimble rewiring, the two attached nets in \eqref{eq:2} are different,
as rewiring any one unit would result in a structure which is not a
net; this is important, as the corresponding morphisms are
distinguished in some $*$-autonomous categories.

The standard problem in the theory of proof-nets is to give a global
\emph{correctness criterion} for identifying, among the
proof-structures, those which can be obtained from desequentializing a
sequent proof.  This then leads to a \emph{sequentialization theorem},
allowing one to reconstruct a sequent proof out of a correct
proof-net. Na\"ive classical nets are very closely modelled on \MLL
nets; this means we may adapt any of the many equivalent formulations
of correctness for $\MLL$ nets to provide a correctness criterion for
them.  For example, the following is the switching graph criterion
\cite{DanReg89StrMul}, suitably altered for our setting:

\begin{definition} Let $F$ be a na\"ive classical proof-structure.

  \begin{enumerate}
  \item A rule-node of $F$ is \emph{switched}
    if it is a Ctr or $\lor$ node. A \emph{switching} of a na\"ive classical
    proof-structure is a choice, for each switched node, of one of its 
    successors.
  \item Given an attachment $f$ for $F$, and a switching $\sigma$ for
    $F$, the \emph{switching graph} $\sigma(F,f)$ is the graph
    obtained by deleting from $F$ all edges from a switched node to
    its successor not chosen by $\sigma$, forgetting directedness of
    edges, and adding an edge from each Wk node to its image
    under $f$.
  \item $(F,f)$ is \emph{ACC-correct} if, for
    each switching $\sigma$, $\sigma(F,f)$ is acyclic and connected. 
  \item $F$ is a \emph{na\"ive classical net} if, for some $f$, $(F,f)$ is ACC-correct.
  \end{enumerate}
\end{definition}

\begin{theorem}[Robinson]
  \begin{enumerate}
  \item   Every proof-structure arising from an $\LK$ proof is a
    na\"ive classical net.
  \item Every na\"ive classical net can be obtained by desequentializing an 
    $\LK$ proof.
  \end{enumerate}
\end{theorem}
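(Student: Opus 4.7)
The plan is to prove the two parts separately, with part (1) proceeding by induction on the LK derivation and part (2) proceeding by a splitting-style sequentialization argument adapted from the \MLLminus + \mix case.

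For part (1), I will induct on the length of the \LK derivation, strengthening the statement to produce an explicit attachment $f$ together with ACC-correctness of $(F,f)$. The base cases (the axioms $\Ax$ and $\Ax_\top$) produce a two-node tree and an isolated $1$-node respectively, both trivially ACC-correct since there are no switched nodes and no weakening nodes to attach. For the $\land$ rule, I combine the two inductively given attached nets by adding an $\land$-node joining the two principal formulae; since conjunction behaves exactly as the multiplicative tensor, every switching of the combined structure restricts to switchings of the premiss nets, and the added tensor edge glues the two connected acyclic graphs into a single one. The $\lor$ and $\C$ rules both add a \emph{switched} node above the conclusion, behaving as $\Par$; the usual \MLLminus{} argument shows that picking either successor in a switching still yields a single connected acyclic graph. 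The only delicate rule is $\W$: the weakening node requires an attachment, so I pick any rule-node already present in the premiss net (it exists because the $\W$ rule has a nonempty context in the version of Figure~\ref{fig:LK}), and verify that in any switching of the extended structure, the new $\mathrm{Wk}$-edge attaches a single new node to an existing connected acyclic component, preserving ACC.

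For part (2), I will adapt the standard \MLLminus{}+\mix{} sequentialization: with conjunctions playing the role of tensors and $\lor$/$\C$ playing the role of pars. The core lemma is a \emph{splitting lemma}: any ACC-correct na\"ive classical net either has at its root a conclusion-formula whose topmost node is an $\Ax$, $\Ax_\top$, $\C$, or $\lor$, or else it has a \emph{splitting} $\land$-node whose removal partitions the net into two ACC-correct subnets. In the first case, the relevant $\C$ or $\lor$ is pulled up as the last rule and the induction hypothesis is applied to the (still ACC-correct) net obtained by erasing it; an $\Ax$ at the root gives an immediate base case. In the splitting $\land$ case, the two subnets, when reattached via their attachments, give two smaller ACC-correct nets which by induction are desequentializations of \LK proofs; glueing them with the $\land$ rule produces the desired proof. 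Weakening nodes are handled last: once a formula rooted at a $\mathrm{Wk}$-node needs to be discharged, the attachment $f$ tells us in which subderivation this $\W$ rule must appear, so the $\W$ rule is inserted at the point where its attachment target is already present in the context.

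The chief obstacle is the splitting lemma itself, in particular the interaction of weakening attachments with $\land$-node splittings: one must show that if a tensor $\land$-node is splitting in the switching-graph sense, then no weakening attachment crosses it incompatibly, so the partitioned attachment functions still produce ACC-correct nets for each half. This is the classical point where the proof depends on having chosen (via Trimble rewiring or the ACC criterion) a ``good'' attachment, and it is where the bulk of the case-analytic work lies; the remaining cases reduce, after standard massaging, to the \MLLminus{}+\mix{} sequentialization argument of Danos--Regnier that the excerpt cites.
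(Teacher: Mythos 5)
Your proposal is essentially the argument the paper intends: the paper does not prove this theorem itself but attributes it to Robinson and states that correctness and sequentialization ``can be developed easily by analogy with \MLL nets,'' which is precisely what you do --- induction over the $\LK$ derivation with an explicitly constructed attachment for part (1), and a splitting-$\land$ sequentialization in the style of Danos--Regnier for part (2), with the weakening attachments as the only genuinely new ingredient. The sketch is sound; the one simplification you could note is that since the attachment edges appear in \emph{every} switching graph, a splitting $\land$-node automatically separates the attachments into the two halves, so the ``chief obstacle'' you identify is less delicate than you suggest.
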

\noindent Using the techniques developed in
\cite{Dan90Thesis,LinLogPrimer}, we can capture classical reasoning in
the presence of the $\mix$ rule (which does not allow us to prove any
new theorems, but extends the space of cut-free proofs):
\[
  \begin{prooftree}
    \turnstile \G \qquad \turnstile \D  \justifies \turnstile \G, \D
    \using \mix
  \end{prooftree}
\]

\begin{definition}  Let $F$ be a Robinson proof-structure, and $f$ an
  attachment for $F$
  \begin{enumerate}
  \item $(F,f)$ is \emph{AC-correct} if, for
    each switching $\sigma$, $\sigma(F,f)$ is acyclic. 
  \item $F$ is a \emph{\mix-net} if there is an
    attachment $f$ such that $(F,f)$ is AC-correct.
  \end{enumerate}
\end{definition}
\begin{theorem}
  \begin{enumerate}
  \item   Every proof-structure arising from a sequent proof in the
    system in Figure~\ref{fig:LK} plus $\mix$ is a $\mix$-net.
  \item Every \mix-net can be obtained by desequentializing a sequent
    proof with $\mix$.
  \end{enumerate}
\end{theorem}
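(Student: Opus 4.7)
The plan is to adapt the proof of the previous theorem (Robinson's theorem for ACC-correctness) to the AC-correctness setting, the main addition being the treatment of the $\mix$ rule.

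For part~(1), I would proceed by induction on the sequent derivation, adding only the $\mix$ case to Robinson's argument. For each non-$\mix$ rule, the reasoning is identical to the corresponding case of Robinson's theorem: the same rule combines proof-structures and attachments in the same way, and acyclicity of every switching graph follows immediately from AC-correctness of the premises. The $\mix$ case is trivial: the attached structure produced is the disjoint union $(F_1 \sqcup F_2,\, f_1 \sqcup f_2)$ of those coming from the two premises, any switching of the union is a disjoint union of switchings of the parts, and a disjoint union of acyclic graphs is acyclic.

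For part~(2), I would proceed by induction on the size of an AC-correct attached proof-structure $(F,f)$ with conclusion $\G$, splitting into cases on the structure of $(F,f)$:
\begin{itemize}
\item If $F$ is a single $\Ax$ or $\Ax_\top$, we are done.
\item If the underlying graph of $(F,f)$ (that is, $F$ together with all attachment edges added by $f$) is disconnected, write it as the union of two nonempty parts $(F_1,f_1)$ and $(F_2,f_2)$. Each is AC-correct, since any switching of $F_i$ extends to one of $F$ whose switching graph has the switching graph of $(F_i,f_i)$ as a connected component; hence both are acyclic. By induction hypothesis each is sequentializable and we combine the two proofs with $\mix$.
\item Otherwise $(F,f)$ is connected, and I would mirror Robinson's sequentialization in the connected case: locate a ``bottom'' rule-node that can be peeled off while preserving AC-correctness. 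A standard empire/Danos--Regnier argument adapted to AC-correctness furnishes a splitting $\land$-node when needed; the Ctr and $\lor$ cases are handled as usual; a bottom Wk node is removed via a rewiring of its attachment to ensure that the residual attached structure remains AC-correct.
\end{itemize}

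The principal technical obstacle is this connected case of part~(2), specifically exhibiting a splitting $\land$-node or, failing that, showing that some bottom Ctr, $\lor$, or Wk can be peeled off correctly. These steps mirror the corresponding ones for $\MLL+\mix$ in \cite{Bel97SubMix,Dan90Thesis} and in Robinson's theorem above; in fact the AC setting is marginally simpler than the ACC setting, since connectedness need not be maintained across the inductive step. The rewiring argument required for eliminating a bottom Wk while preserving acyclicity of every switching graph is the most delicate piece of the adaptation.
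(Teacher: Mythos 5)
Your outline coincides with the paper's own treatment: the paper gives no explicit proof of this theorem, remarking only that correctness and sequentialization for \mix-nets ``can be developed easily by analogy with \MLL nets'' via the techniques of Danos, Bellin and Robinson, and what you sketch (induction on the derivation for part (1) with a trivial disjoint-union argument for \mix, and for part (2) an induction that splits off disconnected components with \mix\ and otherwise peels a bottom $\lor$/Ctr/Wk or finds a splitting $\land$ by the standard AC-adapted splitting-tensor argument) is exactly that adaptation. One small correction: in the AC setting a bottom Wk node is simply deleted together with its attachment edge, and since deleting edges and a pendant vertex cannot create a switching cycle, no rewiring is required there--the genuinely delicate step is only the splitting $\land$ in the connected case, for which Bellin's argument applies.
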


Correctness for na\"ive classical nets, and sequentialization, can be
developed easily by analogy with \MLL nets; for details see \cite{Rob03ProNetCla}.

As intrinsic representations of proofs, na\"ive classical nets have a
number of drawbacks:

\subsubsection{Either correctness is \NP, or weakening introduces noncanonicity}
Correctness for na\"ive classical proof structures is \NP-complete; it
is in \NP, since the correctness criterion goes via guessing an
attachment for each $\mathrm{Wk}$: without an attachment it is not
possible to adapt the correctness criterion from \MLL.  Correctness
for unattached na\"ive nets is \NP\ hard since there is an evident
surjective map from cw-annotated sequents to unattached \MLL nets, for
which correctness is known to be \NP\
hard~\cite{LinWin92constant-only}.  We could, instead, take attached
na\"ive nets as our abstract proof objects, having, as in Robinson's
original formulation, an explicit attachent for each weakening.  Then
correctness would be checkable in polynomial time (so we would have a
propositional proof system) but there would no longer be a canonical
function mapping sequent proofs to proof-nets -- that is, we would not
have a calculus of abstract proofs.
\subsubsection{Contraction is not associative, commutative}
Given a $cw$-annotated sequent $F, \ t:A, \ s:A, \ u:A$, there are
twelve distinct ways to contract the three displayed terms in the
sequent calculus, each leading to a different na\"ive net.  For
example, the net 
\[ F, \  \mathrm{Ctr}(\mathrm{Ctr}(t, s),u):A \] 
is syntactically distinct from the net
\[ \ F, \ \mathrm{Ctr}(t,
\mathrm{Ctr}(s,u)):A\] 
\noindent  Na\"ive classical nets satisfy neither the identity
$\mathrm{Ctr}(\mathrm{Ctr}(t, s),u)=\mathrm{Ctr}(t,
\mathrm{Ctr}(s,u))$, nor the identity $\mathrm{Ctr}(s,t) =
\mathrm{Ctr}(t,s)$; taken together, these equations ensure a
canonical way to contract multiple instances of the same formula.

%
\subsubsection{Weakening is not a unit for contraction}
Given a net $G=F, t:A$, we can weaken to arrive at a net $F, t:A,
\mathrm{Wk}:A$, and then contract to form a net $F, \mathrm{Ctr}(t, \mathrm{Wk}):A$. This
net differs from $G$, but we would prefer it to be identified with
$G$: that is, $\mathrm{Wk}$ should be a unit for the contraction operation.

\subsubsection{Contraction on disjunctions is not pointwise}

Given a cw-annotated sequent \[F,\  t_1:A, \, t_2:A, \, s_1:B, \, s_2:B,\] we can
apply the rules of $\LK$ to obtain a single term of type $A\lor B$ in
five distinct ways, which once again we would prefer were identified.
Two of them are displayed below
\[F, \ \mathrm{Ctr}((t_1\lor s_1), (t_2\lor s_2)):A\lor B \ | \ F,\ 
(\mathrm{Ctr}(t_1, t_2)\lor \mathrm{Ctr}(s_1,s_2)):A \lor B. \] If these
two derivations, are identified, we will say that contraction on
disjunctions is \emph{constructed pointwise}: in na\"ive classical
nets this is clearly not the case.



Two further proposals for proof-net-like objects exist in the
literature. They do not suffer from the above problems but pay a heavy
price for doing so, lacking as they do  a strong connection with the
sequent calculus.  We will not discuss these proposals in as great a
depth as na\"ive classical nets, as there is not such a close
connection between them and expansion-nets.

\subsection{Lamarche-Strassburger nets}
\label{sec:lamarche-strassb-net}
The Lamarche-Strassburger approach to classical
proof-nets~\cite{LamStra05NamProCla} (hereafter LS-nets) are a
generalization of \MLLminus proof nets which allow classical logic to
be captured: instead of changing the underlying forests, as with
na\"ive proof structures, this approach changes the behaviour of the
\emph{links}.  Specifically, while in \MLLminus nets each leaf takes
part in precisely one axiom link, in LS-nets a leaf may take part in
several links, or indeed none -- it is this liberalized notion of
axiom link that allows LS-nets to capture classical logic.  Depending
on the particular flavour of net, there may even be more than one link
between a pair of dual atoms.  The ``proof-structures'' of these
calculi of nets are the following:

\begin{itemize}
\item A $\mathbb{B}$-prenet over $\G$ is a set $\mathcal{L}$ of pairs of
  leaves of $\G$, such that the first member of each pair is labelled
  with a positive atom $a$, and the second member of the pair is
  labelled with the dual $\bar{a}$ of that atom.
\item A $\mathbb{N}$-prenet over $\G$ is a multiset $\mathcal{L}$ of pairs of
  leaves of $\G$, such that the first member of each pair is labelled
  with a positive atom $a$, and the second member of the pair is
  labelled with the dual $\bar{a}$ of that atom.
\end{itemize}

\begin{figure}
  \centering
  \begin{tikzpicture}[level distance=6.5mm, grow=up, sibling
    distance=7mm, edge from parent path= {[<-](\tikzparentnode) to
      (\tikzchildnode)}]]
\node at (-2.8,0) {$\land$} child {node
  (a1) {$a$}} child {node (barb1) {$\bar{b}$}};
\node at (-1.4,0) {$\land$} child {node
  (barb2) {$\bar b$}} child {node (bara1) {$\bar{a}$}} ;
\node at (0,0) {$\land$}  child {node
  (a2) {$a$}} child {node (b1) {$b$}};
\node at (1.4,0) {$\land$}child {node
  (b2) {$b$}} child {node (bara2) {$\bar{a}$}};
\draw [color=gray,out = 60, in = 120] (barb1) to (b2);
\draw [color=gray,out = 50, in = 130] (barb1) to (b1);
\draw [color=gray,out = 50, in = 130] (barb2) to (b2);
\draw [color=gray,out = 30, in = 150] (barb2) to (b1);
\draw [color=gray,out = 30, in = 150] (a1) to (bara1);
\draw [color=gray,out = 30, in = 150] (a2) to (bara2);
  \end{tikzpicture} \quad   \begin{tikzpicture}[level distance=6.5mm, grow=up, sibling
    distance=7mm, edge from parent path= {[<-](\tikzparentnode) to
      (\tikzchildnode)}]]
\node at (-2.8,0) {$\land$} child {node
  (a1) {$a$}} child {node (barb1) {$\bar{b}$}};
\node at (-1.4,0) {$\land$} child {node
  (barb2) {$\bar b$}} child {node (bara1) {$\bar{a}$}} ;
\node at (0,0) {$\land$}  child {node
  (a2) {$a$}} child {node (b1) {$b$}};
\node at (1.4,0) {$\land$}child {node
  (b2) {$b$}} child {node (bara2) {$\bar{a}$}};
\draw [color=gray,out = 60, in = 120] (barb1) to (b2);
\draw [color=gray,out = 50, in = 130] (barb1) to (b1);
\draw [color=gray,out = 50, in = 130] (barb2) to (b2);
\draw [color=gray,out = 30, in = 150] (barb2) to (b1);
\draw [color=gray,out = 30, in = 150] (a1) to (bara1);
\draw [color=gray,out = 60, in = 120] (a1) to (bara1);
\draw [color=gray,out = 30, in = 150] (a2) to (bara2);
  \end{tikzpicture}\quad 

 \begin{tikzpicture}[level distance=7.5mm, grow=up, sibling
    distance=7mm, edge from parent path= {[<-](\tikzparentnode) to
      (\tikzchildnode)}]]
    \node at (-7,0) (c1l){$\land$} child
    {node (a1) {$a$}} child {node (barb1){$\bar{b}$}};
\node at (-5.6,0) (c2l){$\land$} child {node
  (barb2) {$\bar b$}} child {node (bara1) {$\bar{a}$}};
\node at (-4.2,0) (c1r){$\land$} child {node
  (a2) {$a$}} child {node (barb3) {$\bar{b}$}};
\node at (-2.8,0) (c2r){$\land$} child {node
  (barb4) {$\bar b$}} child {node (bara2) {$\bar{a}$}};
\node at (-1,-0.75) {$\land$} child {node
  (a3) {$a$}} child {node (b1) {Ctr} child {node (b12) {$b$}}child {node (b11) {$b$}}};
\node at (0.5,-0.75) {$\land$}  child {node
  (b2) {Ctr}child {node (b22) {$b$}}child {node (b21) {$b$}}} child {node (bara3) {$\bar{a}$}};
\node at (-5.6,-0.75) (ctr1) {Ctr};
\draw[->] (c1l) to (ctr1);
\draw[->] (c1r) to (ctr1);
\node at (-4.2,-0.75) (ctr2) {Ctr};
\draw[->] (c2l) to (ctr2);
\draw[->] (c2r) to (ctr2);
\draw[color=gray, out = 60, in = 120] (barb1) to (b22);
\draw[color=gray, out = 60, in = 120] (a1) to (bara1);
\draw[color=gray, out = 60, in = 120] (a2) to (bara2);
\draw[color=gray, out = 60, in = 120] (a3) to (bara3);
\draw[color=gray, out = 60, in = 120] (barb2) to (b21);
\draw[color=gray, out = 60, in = 120] (barb3) to (b12);
\draw[color=gray, out = 80, in = 100] (barb4) to (b11);
  \end{tikzpicture}
  \caption{The same $\LK^*$ proof, rendered as a $\mathbb{B}$-net, $\mathbb{N}$-net, and na\"ive net}
  \label{fig:BvsNvsnaive}
\end{figure}
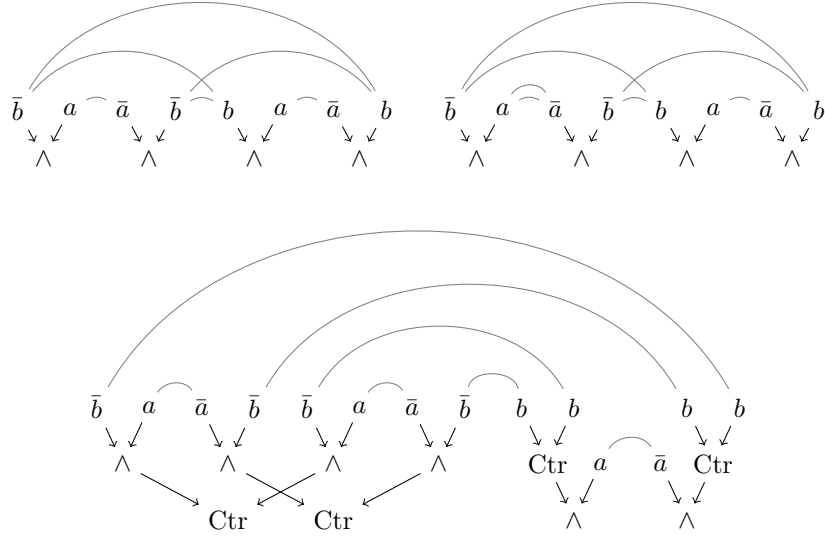

The difference between $\mathbb{B}$-prenets, $\mathbb{N}$-prenets and
na\"ive classical nets can be readily seen in
Figure~\ref{fig:BvsNvsnaive}: while contraction is explicit in na\"ive
nets, in a $\mathbb{B}$-prenet it is represented by an atom's
participation in multiple axiom links. In a $\mathbb{N}$-prenet, there
can, in addition, be multiple links between the same pair of atoms:
thus more information about contraction is present in naive nets than
in $\mathbb{N}$-nets, and more in $\mathbb{N}$-nets than in
$\mathbb{B}$-nets.

The translation from sequent proofs to pre-nets is almost immediate:
it arises simply by tracing the occurrences of atoms through the
sequent proof (for full details see \cite{LamStra05NamProCla}).  If we
are interested in extracting a $\mathbb{B}$-prenet, we only care if
there is a path between two atoms: in the case of $\mathbb{N}$-prenets
we are also interested in \emph{how many} paths there are.  Neither
flavour of LS-net suffers from the non-canonicity problems of
Robinson-style nets, but they introduce new problems:

\subsubsection{No polynomial-time correctness algorithm for $\mathbb{B}$-nets}
Strassburger and Lamarche give in \cite{LamStra05NamProCla} an exponential-time
criterion singling out those $\mathbb{B}$-prenets which correspond to
sequent proofs; since the size of a $\mathbb{B}$-net is polynomially
bounded by the size of its conclusion, we cannot reasonably hope to do
better. The condition given for $\mathbb{N}$-nets in
\cite{LamStra05NamProCla} simply collapses a $\mathbb{N}$-net to a
$\mathbb{B}$-net and checks correctness of the $\mathbb{B}$-net, the
result being that there are ``correct'' $\mathbb{N}$-nets that are not
the translation of any sequent proof. There is some hope that a
different polynomial-time correctness criterion might be found for
these nets, or for the similar \emph{atomic
  flows}~\cite{GugGunStra10BreFlows}, but none has been found so far,
despite substantial effort.  Consequently, there is currently no
notion of sequentializing $\mathbb{N}$-nets, either into a sequent
system or some other calculus.

\subsubsection{Cut-elimination does not preserve  correctness}
Cut-elimination is easy to define on LS-nets: as shown
in~\cite{LamStra05NamProCla}, it suffices, when opposing atomic
contractions in a cut, to simply count the number of paths through the
cut between each pair of atoms. This procedure is proved
in~\cite{LamStra05NamProCla} to be strongly normalizing, confluent,
and correctness preserving on $\mathbb{B}$-nets. However, applying
this procedure to $\mathbb{N}$-nets, there is a $\mathbb{N}$-net which is
the image of a sequent proof, but whose cut reduct is not the image of
a sequent proof; cut-reduction does not preserve correctness with
respect to the sequent-calculus.

\subsection{Hughes's Combinatorial proofs}
\label{sec:hugh-comb-proofs-1}
The combinatorial proofs of
Hughes~\cite{DJDHughes06PWS,DJDHughes06TowHilbProb} are a more radical
departure from the standard notions of proof net than
Lamarche-Strassburger or Robinson-style nets.  Broadly, combinatorial
proofs represent classical proofs as ``fibered'' linear proofs, with
the fibring representing the structural rules.  The
``semi-combinatorial'' presentation of combinatorial proofs given in
\cite{DJDHughes06TowHilbProb} is the most immediately graspable for a
proof-theorist: a combinatorial proof of a sequent $\G$ of classical
propositional logic is a function $f$ from the leaves of an $\MLLmix$
proof net $\pi$ (which we can represent as a \emph{binary} \MLL
formula, in which atoms occur in dual pairs) to the leaves of $\G$
preserving
\begin{itemize}
\item Duality (if leaves $X$ and $Y$ are dual, then so are $f(X)$ and
  $f(Y))$
\item Conjunctive relationships (If the topmost connective between $X$
  and $Y$ is a $\ox$, then the topmost connective between $f(X)$ and
  $f(Y)$ is a $\land$.
\end{itemize}
and such that $f$ is a \emph{contraction-weakening}: 
\begin{itemize}
\item $f$ is built from pure contraction ($c: A \land A \to A$), weakening
($w: A\land B \to A$), and associativity/commutativity of the
connectives, using function composition and ``horizontal'' composition
(if $f: A \to B$ and $g: A' \to B'$ are contraction-weakenings, then
so are the evident functions $f\land g: A \land A' \to B \land B'$ and
$f\lor g: A \lor A' \to B \lor B'$).
\end{itemize}
\begin{example}
\label{ex:combproof}
  An example of a semi-combinatorial proof is the following:
  \[
  \begin{tikzpicture}
    \node (top) at (0,2){ $((\bar{x} \Par \bar{y}) \Par \bar{z}),
      \quad (\bar{w} \ox \bar{v}), \quad ((x \ox v)\Par
      (y\ox w)) \ox z$}; \node (bottom) at (0,0) {$\bar{q}, \quad
      (\bar{p}\land \bar{p}), \quad (((q \lor q) \land p)\land q)$};
    \draw [->]($(top)+(-3.4,-.2)$) to ($(bottom)+(-2.5,.2)$); \draw
    [->]($(top)+(-2.8,-.2)$) to ($(bottom)+(-2.4,.2)$); \draw
    [->]($(top)+(-2.1,-.2)$) to ($(bottom)+(-2.3,.2)$); \draw
    [->]($(top)+(-1.1,-.2)$) to ($(bottom)+(-1.5,.2)$); \draw
    [->]($(top)+(-0.5,-.2)$) to ($(bottom)+(-0.9,.2)$); \draw
    [->]($(top)+(0.6,-.2)$) to ($(bottom)+(0.3,.2)$); \draw
    [->]($(top)+(1.3,-.2)$) to ($(bottom)+(1.45,.2)$); \draw
    [->]($(top)+(2.1,-.2)$) to ($(bottom)+(1,.2)$); \draw
    [->]($(top)+(2.8,-.2)$) to ($(bottom)+(1.6,.2)$); \draw
    [->]($(top)+(3.7,-.2)$) to ($(bottom)+(2.3,.2)$);
  \end{tikzpicture}
  \]
\end{example}
Semi-combinatorial proofs suffer from the same problems as na\"ive
nets with regard to associativity of contraction: differences in the
association of contractions manifest in the $\MLLmix$ formula: for
example, the following is also a semi-combinatorial proof, differing
from the one above only by the association of the left-hand $\Par$:

\[ 
\begin{tikzpicture}
  \node (top) at (0,2){ $(\bar{x} \Par  (\bar{y} \Par \bar{z})), \quad (\bar{w} \ox
  \bar{v}), \quad ((x \ox v)\Par
  (y\ox w)) \ox z$};
\node (bottom) at (0,0)  {$\bar{q}, \quad (\bar{p}\land \bar{p}), \quad (((q
  \lor q) \land p)\land q)$};
\draw [->]($(top)+(-3.4,-.2)$) to ($(bottom)+(-2.5,.2)$);
\draw [->]($(top)+(-2.8,-.2)$) to ($(bottom)+(-2.4,.2)$);
\draw [->]($(top)+(-2.1,-.2)$) to ($(bottom)+(-2.3,.2)$);
\draw [->]($(top)+(-1.1,-.2)$) to ($(bottom)+(-1.5,.2)$);
\draw [->]($(top)+(-0.5,-.2)$) to ($(bottom)+(-0.9,.2)$);
\draw [->]($(top)+(0.6,-.2)$) to ($(bottom)+(0.3,.2)$);
\draw [->]($(top)+(1.3,-.2)$) to ($(bottom)+(1.45,.2)$);
\draw [->]($(top)+(2.1,-.2)$) to ($(bottom)+(1,.2)$);
\draw [->]($(top)+(2.8,-.2)$) to ($(bottom)+(1.6,.2)$);
\draw [->]($(top)+(3.7,-.2)$) to ($(bottom)+(2.3,.2)$);
\end{tikzpicture}
\]
Combinatorial proofs themselves avoid this problem by
representing the binary $\MLLmix$ theorem not as a formula, but as its
\emph{co-graph}: two \MLL formulae have the same co-graph
if and only if they differ by associativity and commutativity of
connectives.  Thus, combinatorial proofs provide a sufficiently
abstract notion of proof for our purposes.

The contraction-weakening requirement is equivalent to two other
requirements, as proved by Hughes: the \emph{skew fibration} condition
and the fact that $f$ preserves maximal cliques of conjunctively
related leaves.  The surprising result
of~\cite{DJDHughes06TowHilbProb} is that these conditions can be
checked in polynomial time: thus Combinatorial proofs, unlike
unattached na\"ive classical nets or LS-nets, form a propositional
proof system.


Combinatorial proofs fail to satisfy our other two
specifications for a good notion of abstract classical proof:

\subsubsection{Sequentialization into a nonstandard calculus}
There are combinatorial proofs which are not the image of any
sequent-calculus proof, as shown in~\cite{DJDHughes06TowHilbProb};
Hughes introduces in that paper an extended calculus (the
\emph{Homomorphism calculus} for which the map from proofs to
invariants is surjective.  This calculus can be seen as a
generalization of the sequent calculus which replaces the usual 
structural rules with a homomorphism rule
\[
\begin{prooftree}
  \G, A  
\justifies \G, B \using \mbox{$f:A \to B$ is a contraction-weakening}
\end{prooftree}
\]
 but is less well understood
than the sequent calculus: in addition, it lacks certain desirable
properties, such as the subformula property.
\subsubsection{Cut-reduction does not preserve sequent correctness}
We might hope that some other, more sophisticated correctness
condition might identify the combinatorial proofs arising from sequent
calculus derivations.  This may be so, but such a correctness
criterion would be incompatible with the dynamic aspects of
combinatorial proofs shown in~\cite{DJDHughes06TowHilbProb}.  In that
paper Hughes defines a notion of combinatorial proof with cut, gives a
strongly normalizing cut-elimination procedure for combinatorial
proofs which preserves his correctness criterion.  However, this
procedure does not stay within this subclass of sequent-correct
combinatorial proofs. 
\label{sec:hugh-comb-proofs}

\section{Expansion nets}
\label{sec:polyt-check-class}
As we saw in the previous section, weakening causes substantial
problems in na\"ive classical proof-nets, but the alternatives
($\mathbb{N}$-nets and combinatorial proofs) lack
correctness/sequentialization with respect to a sequent calculus.  In
this section we give a calculus of nets which retains a connection to
the sequent calculus while also having a polynomial-time correctness
criterion, without the need for weakening attachment and its attendant
noncanonicity.

The basic idea can be seen already in na\"ive classical nets: if
weakening only happens within a disjunction, then attachment is
redundant.  Let $F$ be a na\"ive proof-structure.  If a weakening
subterm $\mathrm{Wk}$ of $F$ is the successor of a disjunction, and if
the other successor $t$ of that disjunction is not an instance of
$\mathrm{Wk}$, we will say that the weakening subterm has a
\emph{default attachment}, namely $t$.  If every weakening subterm of
$F$ has a default attachment, we will say it is
\emph{default-attached}.  If $F$ is default-attached, the
\emph{default attachment} of $F$ is the function from instances of
$\mathrm{Wk}$ to nodes of $F$ assigning each instance of $\mathrm{Wk}$
to its default attachment.

\begin{example}
  The net \eqref{eq:2} for Pierce's formula is default attached: the
  only weakening in that net appears as an immediate subtree of a
  disjunction, and the setting the other disjunct $\bar{x}:\bar{p}$ as the
  attachment for it yields an ACC correct attached net.

  The cw-annotated sequent $x:p, \ \bar{x}: \bar{p}, \ \mathrm{Wk}:q$ is not
  default-attached, as the weakening appears outside of a disjunction.
  The following is also not default-attached: 
\[  1: \top, \ \mathrm{Wk} \land \mathrm{Wk}:\bot \land \bot, \ 1:\top, \ \mathrm{Wk}
\land \mathrm{Wk}:\bot \land \bot,\  1:\top \] 
\end{example}

Since the difficult part of correctness for na\"ive nets is guessing
the attachment, correctness for default-attached nets is easy:

\begin{proposition}
  Correctness of default-attached na\"ive proof-structures can be
  checked in polynomial time.
\end{proposition}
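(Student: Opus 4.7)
The plan is to reduce the correctness check for a default-attached net to a single application of a known polynomial-time correctness criterion for \MLLmix-style nets. The key observation is that, unlike the general case where we must guess an attachment (which is where the \NP-hardness enters), here the attachment is uniquely determined by the structure itself.

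First, given a default-attached na\"ive proof-structure $F$, I would compute its default attachment $f$ explicitly. By hypothesis, every $\mathrm{Wk}$ subterm of $F$ occurs as the immediate successor of a $\lor$-node whose other successor is not $\mathrm{Wk}$; so for each such subterm we simply locate its parent $\lor$-node and read off the sibling. A single linear pass through the forest of $F$ computes $f$ and verifies that $F$ is in fact default-attached (rejecting otherwise). This takes time linear in the size of $F$.

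Second, with $f$ in hand, correctness reduces to checking that $(F,f)$ is ACC-correct --- that is, that for every switching $\sigma$, the graph $\sigma(F,f)$ is acyclic and connected. A na\"ive enumeration of switchings is exponential, but structurally $(F,f)$ is exactly an \MLLminus{}+\mix\ proof-structure with $\land$/$\Ax$ playing the role of tensor/axiom-link and $\lor$/Ctr/Wk playing the role of par (with Wk supplying a single fixed attachment edge, exactly as a $\bot$-node supplies its jump-edge in the standard formulation). Any of the well-known polynomial-time criteria for \MLL+\mix\ correctness --- Danos's contractibility, the Guerrini-style linear-time algorithm, or a direct union-find cycle/component count over switchings --- therefore applies verbatim and runs in polynomial time in $|F|$.

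The only point requiring verification, and the one I would write out with some care, is that the substitution of Wk-with-default-attachment for a $\bot$-node in the \MLLminus{}+\mix\ correctness proof is sound: since the attachment edge contributed by a Wk-node is fixed (not varying with the switching), it enters every switching graph identically, just as $\bot$'s jump-edge does in the standard criterion. Granted this, the polynomial-time bound is immediate, which is essentially why default-attachment sidesteps the \NP-hardness of the general case.
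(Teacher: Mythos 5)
Your proposal is correct and follows essentially the same route as the paper's own (very brief) proof: compute the default attachment in linear time, then run the standard polynomial-time correctness algorithm for \emph{attached} nets, the \NP-hardness of the general case residing entirely in guessing the attachment. Your additional care in checking that a fixed Wk-attachment edge behaves like a $\bot$-jump in the \MLLminus{}+\mix\ criteria is a reasonable elaboration of a step the paper leaves implicit, but it is not a different argument.
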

\begin{proof}
  Correctness for na\"ive structures is \NP\ because the attachment of
  the weakenings must be guessed.  For a default-attached structure,
  the default attachment can be computed in linear time, and the
  polynomial correctness algorithm for attached nets may then be
  applied. \qed
\end{proof}

\label{sec:expansion-nets}
Default-attached nets improve on general na\"ive nets by having a
polynomial-time verifiable correctness criterion, without the need for
an explicit weakening attachment (which compromises the canonicity of
na\"ive nets). However, we still have the problem that contraction is
neither associative, commutative, nor pointwise on disjunctions. The
first two of these problems were noticed by Girard at the same time he
proposed nets for classical logic, and there is an evident solution:
make contraction n-ary, while at the same time forbidding either
weakenings or contractions from being the successors of a contraction.
The last of these problems (pointwise contraction) can be solved by
forbidding contraction on disjunctions.  We enforce those conditions
by moving to a new kind of proof-net, which we call \emph{expansion
  nets}: these nets were introduced in \cite{McK10exnets}. The
terminology is inspired by Miller's \emph{expansion-tree
  proofs}~\cite{Mil87ComRep}, which are a representation of proofs in
first- and higher-order logic.  Expansion-tree proofs represent n-ary
contraction in a similar fashion to expansion nets; in expansion trees
contraction happens only on existentially quantified subformulae (not
on universally quantified formula), and is represented by formal sums
(expansions) of witnessing terms rather than binary
contractions. Expansion-tree proofs provide a compact,
bureaucracy-free representation of proofs for first- and higher-order
classical logic; expansion-nets provide a similar technology for
propositional classical logic.

Expansion-nets are built from trees we call \emph{propositional
  expansion trees} (to distinguish from Miller's expansion trees):

\begin{definition}[Propositional Expansion trees]
  \label{def:alel-terms}
  Let $\mathcal{X}$ be a set of wire symbols, with = $\var{x},
  \var{y}, \bar{x}, \bar{y}\dots$ the corresponding \emph{wire
    variables} -- atoms over $\mathcal{X}$.  An \emph{propositional
    expansion tree} over $\mathcal{X}$ is of the form $t$ below:
  \[ t ::= 1 \ | \ ( w + \dots + w) \ | \ (t\lor t) \ | \ (t \lor *) \
  | \ (* \lor t) \qquad w ::= \var{x} \ | \ \var{\bar{x}} \ | \
  t\otimes t \]
  \noindent where $(w +\dots + w)$ denotes a nonempty finite formal
  sum, which we call an \emph{expansion}.  We call the members of the
  grammar $w$ ``witnesses''.  In line with the previous section we
  will call trees of the form $(t \lor *)$ and $(* \lor t)$
  \emph{default weakenings}.
\end{definition}

Just as cw-trees gave us a succinct way to write down and reason about
na\"ive nets, so propositional expansion trees will give us a nice way
to present expansion nets.  However, it will be just as important to
think of expansion-nets as a graphical proof calculus, in particular
when we want to talk about paths in a net.  For this purpose, we will
need to consider the tree (in the sense of
Section~\ref{sec:forests-sequents}) defined by a propositional
expansion tree: that is, a set of nodes and a predecessor function.
We should also consider which of the nodes of this tree are ordered.

The parse tree for an expansion-tree/witness (given by the grammars in
Definition~\ref{def:alel-terms}) gives us an immediate reading of a
propositional expansion tree (or witness) as a tree: for example, the propositional
expansion trees
\[   (\bar{x} + \bar{y} + \bar{z}) \quad ((\bar{w}) \ox (\bar{v})) \quad ((((* \lor (x)) \ox (v))+ (((y) \lor *)\ox (w))) \ox (z)) \] 
can be seen as trees
\begin{equation}
  \begin{tikzpicture}[level distance=6.5mm, grow=up, sibling
    distance=20mm, edge from parent path= {[<-](\tikzparentnode) to
      (\tikzchildnode)}]]
  \node at (-3, 1) {$+$} 
    child [sibling
    distance=10mm]{node {$\bar{x}$}} 
    child [sibling
    distance=10mm] {node{$\bar{y}$}}
    child [sibling
    distance=10mm] {node{$\bar{z}$}};
  \node at (0, 1) {$+$} 
    child {node {$\ox$} 
      child {node{$+$} child {node{$\bar{v}$}}}
      child {node{$+$} child {node{$\bar{w}$}}}};
  \node at (5, 0) {$+$} 
    child {node {$\ox$} 
      child {node{$+$} child {node{$z$}}}
      child {node{$+$} child [sibling
    distance=20mm] {node{$\ox$} 
                               child [sibling
    distance=10mm]{node{$+$} child {node{$w$}}}
                               child [sibling
    distance=10mm] {node {$(\;  \lor \ *)$} child
                                 {node{$+$} child {node{$y$}}}}}
                       child [sibling
    distance=20mm]{node{$\ox$} 
                               child [sibling
    distance=10mm] {node{$+$} child {node{$v$}}}
                               child [sibling
    distance=10mm] {node {$(* \ \lor \; ) $} child {node{$+$} child {node{$x$}}}}}
}};
\end{tikzpicture}
\end{equation}
However, this tree-reading of an expansion-tree treats the subtrees
$(t\lor *)$ and $(* \lor t)$ as having only one successor.  It will be
useful at certain points to regard $*$ as a subtree of $(t\lor *)$
(resp $(* \lor t)$) even though the symbol $*$ never appears outside
of a default weakening.  Treating the occurrences of $*$ as nodes, we
obtain the tree
\begin{equation}
  \begin{tikzpicture}[level distance=6.5mm, grow=up, sibling
    distance=25mm, edge from parent path= {[<-](\tikzparentnode) to
      (\tikzchildnode)}]]
  \node at (0, 0) {$+$} 
    child {node {$\ox$} 
      child {node{$+$} child {node{$z$}}}
      child {node{$+$} child [sibling
    distance=25mm] {node{$\ox$} 
                               child [sibling
    distance=10mm]{node{$+$} child {node{$w$}}}
                               child [sibling
    distance=10mm] {node {$\lor$} child {node{$*$}} child
                                 {node{$+$} child {node{$y$}}}}}
                       child [sibling
    distance=25mm]{node{$\ox$} 
                               child [sibling
    distance=10mm] {node{$+$} child {node{$v$}}}
                               child [sibling
    distance=10mm] {node {$\lor$} child {node{$+$} child {node{$x$}}}
                                  child {node{$*$}}}}
}};
\end{tikzpicture}
\end{equation}

  We will call the nodes of a propositional expansion tree
which are not instances of $*$ \emph{proper nodes}.

When showing examples of expansion-nets, we will sometimes 
not show the expansion structure on trivial expansions of atomic type:
this improves readability and makes some diagrams smaller.  For
example, using this shorthand the three expansion trees above are:
\begin{equation}
  \begin{tikzpicture}[level distance=6.5mm, grow=up, sibling
    distance=20mm, edge from parent path= {[<-](\tikzparentnode) to
      (\tikzchildnode)}]]
  \node at (-3, 1) {$+$} 
    child [sibling
    distance=10mm]{node {$\bar{x}$}} 
    child [sibling
    distance=10mm] {node{$\bar{y}$}}
    child [sibling
    distance=10mm] {node{$\bar{z}$}};
  \node at (0, 1) {$+$} 
    child {node {$\ox$} 
      child {node{$(\bar{v})$}}
      child {node{$(\bar{w})$}}};
    \node at (5,0) {$+$} 
    child {node {$\ox$} 
      child {node{$(z)$}}
      child {node{$+$} child [sibling
    distance=25mm] {node{$\ox$} 
                               child [sibling
    distance=10mm]{node{$(w)$}}
                               child [sibling
    distance=10mm] {node {$\lor$} child {node{$*$}} child
                                 {node{$(y)$}}}}
                       child [sibling
    distance=25mm]{node{$\ox$} 
                               child [sibling
    distance=10mm] {node{$(v)$}}
                               child [sibling
    distance=10mm] {node {$\lor$} child {node{$(x)$}}
                                  child {node{$*$}}}}
}};
\end{tikzpicture}
\end{equation}

The successors of a node $t \lor s$ or $t \ox s$ are the nodes $t$ and
$s$: these nodes are ordered, as they correspond to the
sequent-calculus introduction rules for the connectives.  The
successors of a node $(w_1+ \cdots + w_n)$ are the nodes $w_1$ to
$w_n$. Since $+$ denotes a formal sum, the successors of an expansion
are \emph{unordered}: this corresponds to the fact that contraction is
a symmetric operation.


The successors of a node $(t \lor *)$ are the node $t$ and a node
labelled $*$, ordered such that the order of $t$ is 0 and the order of
the $*$ is 1.  Similarly for $(* \lor t)$, but with the orders
reversed.  The nodes labelled with $*$, $x$, $\bar{x}$ and $1$ have no
successors: they are the leaves of the tree.

Our proof structures will be \emph{typed} forests of propositional
expansion trees: we type propositional expansion trees with formulae
of classical propositional logic.  To maintain the associativity and
commutativity of the formal sum (which interprets contraction), we
make a distinction at the level of types between witnesses and
expansions: the expansions recieve a special ``witness types'', while
the expansion is typed with a formula.  This enforces that
contractions are n-ary and of maximum size.

\begin{definition}
  \label{def:types}
  A \emph{type} is either
  \begin{enumerate}
  \item A formula of classical propositional logic;
  \item A \emph{witness type}: one of the three following forms:
    \begin{itemize}
    \item A \emph{positive witness type}, written $[a]$, where $a$ is
      a positive atom;
    \item A \emph{negative witness type}, written $[\bar a]$, where
      $\bar{a}$ is a negative atom; or
    \item A conjunctive witness type, written $A \otimes B$, where $A$
      and $B$ are formulae of propositional classical logic.
    \end{itemize}
  \end{enumerate}
  Each witness type has an underlying classical formula: for $A\ox B$
  this is $A \land B$, for $[a]$ this is $a$ and for $[\bar{a}]$ this
  is $\bar{a}$.

  A \emph{typed tree}/\emph{typed witness} is a pair of a
  propositional expansion tree/witness and a type, derivable in the
  typing system shown in Figure~\ref{fig:typing-derivations}. This
  typing system should be thought of as an analogue of
  Figure~\ref{fig:robnets} for expansion-nets: it specifies the shape
  of the ``proof-structures'' we consider.
\end{definition}


\begin{example}
 The wire variable $x$ can be assigned the witness type $[p]$, while the expansions
$(x)$ (a \emph{trivial} expansion) and $(x + y)$ can be assigned as a type the
propositional formula $p$.
\end{example}

\begin{example}
\label{ex:forestexample}
The following are correctly typed propositional expansion trees:
  \[ (\bar{x} + \bar{y} + \bar{z}):\bar{q} \qquad ((\bar{w}) \ox
  (\bar{v})):(\bar{p}\land \bar{p})  \]\[((((* \lor (x)) \ox (v))+
  (((y) \lor *)\ox (w))) \ox (z)):(((q \lor q) \land p)\land q)  
\]
\end{example}

\begin{figure}[t]
  \noindent\hrulefill
  \centering
  \[
  \begin{prooftree}
    \justifies \var{\bar{x}} : [\bar{p}]
  \end{prooftree} \qquad
  \begin{prooftree}
    \justifies 1 : \top
  \end{prooftree}
  \qquad \begin{prooftree} t:B \justifies (*\lor t) : A\lor B
  \end{prooftree}\qquad
  \begin{prooftree}
    t:A \justifies (t\lor *) : A\lor B
  \end{prooftree}
  \qquad
  \begin{prooftree}
    \justifies \var{x} : [p]
  \end{prooftree}
  \]\vspace{0.7em}
  \[
  \begin{prooftree}
    t : A \quad s: B \justifies (t \lor s) : A \lor B
  \end{prooftree}
  \qquad
  \begin{prooftree}t : A \quad s: B \justifies t \otimes s : A \otimes
    B \end{prooftree}\]\vspace{0.7em}
  \[
  \begin{prooftree} w_1 : [p] \ \cdots \ w_n:[p] \justifies (w_1+
    \cdots + w_n) : p \end{prooftree} \qquad
  \begin{prooftree} w_1 : [\bar{p}] \ \cdots \ w_n:[\bar{p}]
    \justifies (w_1+ \cdots + w_n) : \bar{p} \end{prooftree}\qquad
  \begin{prooftree} w_1 : A\otimes B \ \cdots \ w_n:A \otimes B
    \justifies (w_1+ \cdots + w_n) : A \land B \end{prooftree}
  \] \vspace{0.7em}
  \noindent\hrulefill
  \caption{Typing derivations for propositional expansion trees}
  \label{fig:typing-derivations}
\end{figure}

\begin{definition}
  A \emph{typed forest} is a finite forest $F$ of typed propositional
  expansion trees and witnesses, in which axiom variables occur in
  dual pairs, i.e.
  \begin{enumerate}
  \item each axiom variable $\var{x}$, and each negated variable
    $\bar{y}$, occurs at most once in $F$, and
  \item there is an occurrence of $\var{\bar{x}}$ in $F$ if and only
    if there is an occurrence of $\var{x}$.
  \end{enumerate}
  The type of a typed forest $F$ is the forest of types of the terms
  in $F$.  We will say that $F$ is an $e$-annotated sequent if all the
  terms in $F$ are expansion-trees: equivalently, if the type of $F$
  is a sequent of classical propositional logic (that is, it contains
  no witness types).
\end{definition}

\begin{example}
  The forest consisting of the three typed propositional expansion trees shown in 
  Example~\ref{ex:forestexample} is an e-annotated sequent. 
\end{example}

\begin{example}
  The following is a typed forest:
\[ ((\bar{w}) \ox (\bar{v})): \bar{p} \land \bar{p}, \ w:[p], \ v:[p]\]
\noindent It is not an e-annotated sequent, since some of its roots
are witnesses.
\end{example}

The e-annotated sequents are our notion of \emph{proof-structure}; the
more general notion of typed forests is needed to study subproofs and
cut-elimination.

\begin{example}
  The following e-annotated sequent arises by annotating the standard
  proof of Pierce's law
  \begin{equation}
    (((\var{\bar{x}}) \lor * ) \otimes (\var{\bar{y}})):(\bar{p} \lor
    q)\land \bar{p}, \quad  (\var{x} +
    \var{y}): p
  \end{equation}
  As with cw-annotated sequents, we can consider the \emph{graph} of
  this annotated sequent by adding in the axiom wires, giving a
  representation of our proof-structures closer to that usually seen
  for proof-nets:

\begin{definition}
  The \emph{graph} of an e-annotated sequent $F$ is a directed graph
  with vertices identical to the nodes of the forest of $F$.  The
  edges of the graph are given by the forest structure (with edges
  directed toward the root), plus an edge from $\var{x}$ to
  $\var{\bar{x}}$ for each wire variable $\var{x}$ appearing in $F$.
\end{definition}

For example, this graph represents the proof of Pierce's formula given
above:

\begin{equation}
  \begin{tikzpicture}[level distance=6.5mm, grow=up, sibling
    distance=20mm, edge from parent path= {[<-](\tikzparentnode) to
      (\tikzchildnode)}]] 
    \node at (-2, 0) {$(\bar{p} \lor q)\land \bar{p}$} 
    child {node {$+$} 
      child {node{$\otimes$}  
        child [level distance=6mm]{ node {$+$} 
          child [level distance=8mm]{node (bary) {$\var{\bar{y}}$}}}
        child [level distance=4mm]{node{$\lor$} 
          child [level distance=8mm]{ node {$*$}}
          child [level distance=8mm] { node {$+$} 
            child {node (barx) {$\var{\bar{x}}$}}}}
      }};
    \node at (2,1.5) {$p$} 
    child {node {$+$} 
      child {node (y) {$\var{y}$}} 
      child {node (x) {$\var{x}$}}
    };
    \draw [->] (x) [in=30, out=150] to (barx);
    \draw [->] (y) [in=30, out=150] to (bary);
  \end{tikzpicture}\label{eq:1}
\end{equation}

\end{example}

The $e$-annotated sequents are our notion of proof structure: the
expansion nets are those $e$-annotated sequents which arise from
sequent proofs in $\LKvar$.  The procedure of inductively constructing
a proof-net from a sequent proof is given via the annotated sequent
calculus shown in Figure~\ref{fig:LKE}.

\begin{definition}
\label{def:exnets}
  An \emph{expansion-net} is an e-annotated sequent derivable in the
  system shown in Figure~\ref{fig:LKE}.
\end{definition}
\begin{remark}
  Notice that the order in which contractions occur in the sequent
  proof is no longer relevant to the net derived, as it was in na\"ive
  classical nets, since we represent contractions by the formal sum of
  witnesses. This can be seen in the following two examples of
  annotated derivations:
\end{remark}
 \begin{equation*}
  \begin{prooftree}
    \[
       \[
          \[
             \[ 
             \justifies 
             (\var{\bar{x}}):\bar{a}, \  (\var{x}):a
             \using \Ax \]
             \qquad 
             \[
             \justifies
             (\var{\bar{y}}):\bar{a}, \ (\var{y}):a
             \using \Ax \]
             \justifies (\var{\bar{x}}):\bar{a}, \ (\var{\bar{y}}):\bar{a},
             (\var{x} \otimes \var{y}): a \land a
             \using \land
          \] \qquad 
               \[
               \justifies (\var{\bar{z}}):\bar{a}, \ (\var{z}):a \using
               \Ax \] 
          \justifies (\var{\bar{x}}):\bar{a},\
               (\var{\bar{y}}):\bar{a},\ \var{\bar{z}}:\bar{a} ,\
               ((\var{x} \otimes \var{y}) \otimes \var{z}): (a \land
               a) \land a 
          \using \land
          \]
    \justifies 
         (\var{\bar{x}}):\bar{a}, \
         (\var{\bar{y}}+\var{\bar{z}}):\bar{a} ,\
         ((\var{x} \otimes \var{y}) \otimes \var{z}): (a \land a) \land a
    \using \C
    \]
    \justifies
    (\var{\bar{x}} +\var{\bar{y}}+\var{\bar{z}}): \bar{a}, \ ((\var{x} \otimes
    \var{y}) \otimes \var{z}): (a \land a) \land a
    \using \C
  \end{prooftree}
\end{equation*}

 \begin{equation*}
  \begin{prooftree}
    \[
       \[
          \[
             \[ 
             \justifies 
             (\var{\bar{x}}):\bar{a}, \  (\var{x}):a
             \using \Ax \]
             \qquad 
             \[
             \justifies
             (\var{\bar{y}}):\bar{a}, \ (\var{y}):a
             \using \Ax \]
             \justifies (\var{\bar{x}}):\bar{a}, \ (\var{\bar{y}}):\bar{a},
             (\var{x} \otimes \var{y}): a \land a
             \using \land
          \] \qquad 
               \[
               \justifies (\var{\bar{z}}):\bar{a}, \ (\var{z}):a \using
               \Ax \] 
          \justifies (\var{\bar{x}}):\bar{a},\
               (\var{\bar{y}}):\bar{a},\ \var{\bar{z}}:\bar{a} ,\
               ((\var{x} \otimes \var{y}) \otimes \var{z}): (a \land
               a) \land a 
          \using \land
          \]
    \justifies 
         (\var{\bar{x}}+\var{\bar{z}}):\bar{a}, \
         (\var{\bar{y}}):\bar{a} ,\
         ((\var{x} \otimes \var{y}) \otimes \var{z}): (a \land a) \land a
    \using \C
    \]
    \justifies
    (\var{\bar{x}} +\var{\bar{y}}+\var{\bar{z}}): \bar{a}, \ ((\var{x} \otimes
    \var{y}) \otimes \var{z}): (a \land a) \land a
    \using \C
  \end{prooftree}
\end{equation*}

\begin{landscape}
\begin{figure}
\centering
\[
\begin{prooftree}
\[
\[
\[
\[
\[
\[
\justifies
(\bar{x}):\bar{q}, (x):q
\using \Ax
\]  
\justifies
(\bar{x}):\bar{q}, (* \lor (x)):(q \lor q)
\using \lor_1
\]  \quad 
\[\justifies  (w):p, (\bar{w}):\bar{p} \using \Ax\]
\justifies
(\bar{x}):\bar{q}, t:((q \lor q)\land p), (\bar{w}):\bar{p}
\using
\land
\]
\[
\[
\[
\justifies
(\bar{y}):\bar{q}, (y):q
\using \Ax
\]  
\justifies
(\bar{y}):\bar{q}, ((y)\lor *): (q \lor q)
\using \lor_0
\]  \quad 
\[\justifies  (v):p, (\bar{v}):\bar{p} \using \Ax\]
\justifies
(\bar{y}):\bar{q}, s:((q \lor q)\land p), (\bar{v}):\bar{p}
\using
\land
\]
\justifies 
((\bar{w})\otimes(\bar{v})):(\bar{p} \land \bar{p}), (\bar{x}):\bar{q}, (\bar{y}):\bar{q}, t:((q \lor q)\land p), s:((q \lor
q)\land p)
\using \land
\]
\justifies
((\bar{w})\otimes(\bar{v}) ):(\bar{p} \land \bar{p}), \
(\bar{x}+\bar{y}) :\bar{q}, \ s+t:((q \lor q)\land p)
\using \C^2
\]\ \hspace{-6em} \[ \justifies (z):q, (\bar{z}):\bar{q} \using\Ax \]
\justifies
(\bar{x}+\bar{y}):\bar{q}, \ (\bar{z}):\bar{q},
((\bar{w})\otimes(\bar{v}) ):(\bar{p}\land \bar{p}), \ s+t:(((q \lor q) \land p)\land
q)
\using \land
\]
  \justifies
(\bar{x}+\bar{y}+\bar{z}):\bar{q},\ ((\bar{w})\otimes(\bar{v}) )
:(\bar{p}\land \bar{p}),\ ((s+t) \ox (z)):(((q \lor q) \land p)\land q)
\using \C
\end{prooftree}
\]
\caption{A sample derivation in annotated $\LKvar$}
\label{fig:derivex}
\end{figure}
\end{landscape}
\begin{example}
  The e-annotated sequent 
  \[ F =  (\bar{x} + \bar{y} + \bar{z}):\bar{q}, \quad ((\bar{w}) \ox
  (\bar{v})):(\bar{p}\land \bar{p}),\]\[ \quad ((((* \lor (x)) \ox (v))+
  (((y) \lor *)\ox (w))) \ox (z)):(((q \lor q) \land p)\land q)  \] 
is an expansion-net: if we let $t = (((y) \lor *)\ox (w))$, and $s=((*
\lor (x)) \ox (v))$, then the derivation in Figure~\ref{fig:derivex}
is a derivation of $F$. 

\end{example}
Cut-free formula-completeness of $\LKvar$ gives us the following:

\begin{theorem}
  A formula $A$ of classical propositional logic is valid if and only
  if there is an expansion net $t:A$.
\end{theorem}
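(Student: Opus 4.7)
The claim packages together soundness and completeness of expansion nets, and both directions should follow almost immediately from results already established in the paper.

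For the soundness direction (existence of $t:A$ implies $\vDash A$), I would appeal to Definition~\ref{def:exnets}: an expansion net $t:A$ is by definition derivable in the annotated system of Figure~\ref{fig:LKE}, which is just $\LKvar$ with terms decorating the formulae. Erasing the term annotations yields an ordinary $\LKvar$ derivation of $A$. Each rule of $\LKvar$ (including $\mix$ and both additive/multiplicative $\lor$-rules) is easily seen to be sound for classical truth semantics, so $\vDash A$ follows.

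For the completeness direction ($\vDash A$ implies existence of an expansion net $t:A$), I would chain together three ingredients: (i) standard completeness of $\LK$, which gives $\LK \turnstile A$; (ii) the proposition proved in Section~\ref{sec:vari-sequ-calc} which partitions any $\LK$-provable sequent $\G$ into $\G_s, \G_w$ with $\LKvar \turnstile \G_s$; and (iii) the annotation process of Figure~\ref{fig:LKE}, which constructs an expansion net from any $\LKvar$ derivation. Applying (ii) to the singleton sequent $\{A\}$, the formula $A$ is either strong or weak. If it were weak, then $\G_s$ would be empty and $\LKvar$ would prove the empty sequent, which is impossible: every axiom of $\LKvar$ has at least one root, and every rule of $\LKvar$ takes nonempty premises to a nonempty conclusion, so by an easy induction no derivation in $\LKvar$ has empty conclusion. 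Hence $A$ must be strong, giving $\LKvar \turnstile A$; annotating this derivation via the rules in Figure~\ref{fig:LKE} then produces an expansion net $t:A$.

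The only substantive step is the observation that $\LKvar$ does not prove the empty sequent, and this is routine. No new technology is needed beyond what has already been developed; the theorem is essentially a corollary packaging cut-free formula-completeness of $\LKvar$ together with the definition of expansion nets.
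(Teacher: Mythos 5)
Your proposal is correct and follows essentially the same route as the paper, which states this theorem as an immediate consequence of cut-free formula-completeness of $\LKvar$ (the partitioning proposition of Section~\ref{sec:vari-sequ-calc}) together with the definition of expansion nets as the e-annotated sequents derivable in $\LKE$. Your additional observation that $\LKvar$ cannot prove the empty sequent, so that a valid formula $A$ must itself be the strong part of the partition, is a small but genuine detail the paper leaves implicit.
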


\begin{figure}[t]
  \noindent\hrulefill
  \centering
  \[
  \begin{prooftree}
    \justifies 1:\top \using \Ax_\top
  \end{prooftree} \qquad
  \begin{prooftree}
    F \quad G \justifies F, \ G \using \mix
  \end{prooftree}
  \qquad
  \begin{prooftree}
    \justifies (\var{\bar{x}}): \bar{p}, \ (\var{x}): p \using \Ax
  \end{prooftree}
  \]\vspace{0.7em}
  \[
  \begin{prooftree}
    F, \ t : A, \ s:B \justifies F, \ t\lor s : A \lor B \using \lor
  \end{prooftree} \qquad
  \begin{prooftree}
    F, \ t : A \justifies F, \ t\lor * : A \lor B \using \lor_0
  \end{prooftree}
  \quad
  \begin{prooftree}
    F, \ s : B \justifies F, \ *\lor s : A \lor B \using \lor_1
  \end{prooftree} \qquad\] \vspace{0.7em}
  \[
  \begin{prooftree}
    F, \ t : A \qquad G, \ s: B \justifies F, G,\ ( t\otimes s ) : A
    \land B \using \land
  \end{prooftree} \] \vspace{0.7em}
  \[
  \begin{prooftree}
    F, \ t: A \land B, \ s: A \land B \justifies F, \ t + s : A \land
    B \using \C_\land
  \end{prooftree}\qquad\qquad
  \begin{prooftree}
    F, \ s: p, \ t: p \justifies F, \ s+t : p \using \C_p
  \end{prooftree}
  \qquad\qquad
  \begin{prooftree}
    F, \ s: \bar{p}, \ t: \bar{p} \justifies F, \ s + t : \bar{p}
    \using \C_{\bar{p}}
  \end{prooftree}
  \]
  \noindent\hrulefill
  \caption{$\LKE$: an annotated version of $\LKvar$ deriving expansion
    nets}
  \label{fig:LKE}
\end{figure}

Each e-annotated sequent corresponds to an equivalence-class of
default-attached cw-annotated sequents, modulo the associativity and
commutativity of contraction and the pointwise construction of
contractions.  Furthermore, it is easy to verify that, given an
equivalence class of cw-annotated sequents induced by an e-annotated
sequent, either all or none of them are correct.  Thus, correctness of
a member of the equivalence class can be used to define a notion of
correctness for expansion-nets.  However, it will be useful later to
consider the idea of a switching path in an expansion-net, and for
this reason we give now an independent definition of correctness for
expansion-nets -- actually, for all typed forests.  We give here the
notion of AC-correctness (AC for ACyclic, as distinct from ACC,
ACyclic and Connected, the usual criterion for \MLLminus nets) for
typed forests:

\begin{definition} Let $F$ be a typed forest
\label{def:correctness}
  \begin{enumerate}
  \item A node $X$ of $F$ is a \emph{switched node} if it is an
    expansion, or if it is a $\lor$ node $t \lor s$ where neither $t$
    nor $s$ is an instance of $*$.
  \item A switching $\sigma$ for $F$ is a choice of successor for each
    switched node.
  \item The switching graph $\sigma(F)$ is obtained from the graph of
    $F$ by:
    \begin{enumerate}
    \item[1:] deleting all incoming edges to each switched node other
      than those coming from the nodes chosen by the switching, and
    \item[2:]forgetting the directedness of edges.
    \end{enumerate}
  \item $F$ is $AC$-correct if, for every switching $\sigma$ of $F$,
    $\sigma(F)$ is acyclic.
  \end{enumerate}
\end{definition}
\begin{remark}
  Notice that nodes of the form $(t \lor *)$ and $(* \lor t)$ are
  \emph{unswitched}; we can see this as implicitly adding to the
  switching graph an attachment from $*$ to $t$.
\end{remark}

While the switching graph definition of correctness suggests an
exponential-time correctness algorithm, it is essentially the same as
the \MLL + \mix\ switching criterion, and can therefore be checked in
polynomial time; such a polynomial time algorithm is given, for
example, by attempting to sequentialize by searching for
\emph{splitting pars}, a technique first described
in~\cite{Dan90Thesis}, and available in English translation in the
Linear Logic Primer~\cite{LinLogPrimer}.  

A useful notion arising from the switching graphs is that of a
\emph{switching path}: a nonempty sequence $P = X_1, \dots X_n$ of
nodes of $F$ which defines a path in some switching graph $F\sigma$ of
$F$.  The AC correctness criterion can, using this notion, be stated
as follows: an annotated sequent is correct if it all its switching
paths are acyclic.  We will refer to a switching path as ``entering a
node $X$ through its successor $Y$'' (or ``entering $X$ from above'')
on a switching path $P$ if the node $Y$ is immediately followed by the
node $X$ in $P$, and ``entering a node $X$ through its predecessor
$Z$'' (or ``entering $X$ from below'') if $X$ is immediately preceded
by $Z$ in $P$.  Terminology related to a path leaving a node through
predecessors/successor is defined analogously.

The $AC$ correctness criterion characterizes, of course, those
e-annotated sequents derivable in $\LKE$:

\begin{theorem}
\label{thm:correctness}
  An e-annotated sequent $F$ is an expansion-net (i.e. $\LKE
  \turnstile F$) if and only if $F$ is $AC$-correct.
\end{theorem}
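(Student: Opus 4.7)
The plan is to establish the two directions separately: soundness ($\LKE \turnstile F$ implies AC-correctness) by a direct induction on the $\LKE$ derivation, and sequentialization (AC-correctness implies $\LKE \turnstile F$) by induction on the number of proper nodes of $F$, relying on the subnet/empire technology promised in Section~\ref{sec:subnets}.

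For soundness I proceed by induction on the derivation of $F$ in $\LKE$. The base cases $1:\top$ and $\var{\bar{x}}:\bar{p},\var{x}:p$ have a unique switching graph, trivially acyclic. The $\mix$ case is immediate since every switching graph of $F,G$ is the disjoint union of switching graphs of $F$ and $G$. For $\lor$, the new node $t\lor s$ is switched: in any switching exactly one of the two incoming edges survives, giving a pendant attachment which cannot create a cycle. The cases $\lor_0,\lor_1$ are even easier because the new node is unswitched and one of its two successors is the leaf $*$. The $\land$ case is the standard \MLLminus+\mix\ argument: the new $\otimes$-node is unswitched, but its two successors come from disjoint typed forests in the premises, so any putative cycle through the new node would have to leave and re-enter one of the two sides, contradicting the induction hypothesis on that side. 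The three contraction rules merge expansions $s=(w_1+\cdots+w_n)$ and $t=(v_1+\cdots+v_m)$ into $s+t$; here the key observation is that every switching $\sigma$ of the conclusion selects a single witness $w$ for the merged expansion, and this switching corresponds to a switching of the premise that selects $w$ for (say) $s$ and some arbitrary witness for $t$. Any cycle in $\sigma(F,s+t:p)$ either avoids the merged node (and already existed in the premise) or passes through it via its unique chosen successor $w$; in the latter case the same path lifts to a cycle in the premise's switching graph, contradicting the induction hypothesis.

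For sequentialization I induct on the number of proper nodes of $F$. If $F$ is empty, or consists of a single axiom, we are done. Otherwise I try to find a rule that can serve as the last rule of a derivation, and show that undoing it yields a smaller AC-correct typed forest:
\begin{itemize}
\item If some root of $F$ is $t\lor s$, $t\lor *$ or $*\lor s$, remove the $\lor$-node (and the leaf $*$, if present), exposing $t$ and/or $s$ as new roots. AC-correctness of the residue follows because any switching of the residue extends to a switching of $F$ by a compatible choice at the removed $\lor$.
\item If some root is an expansion $(w_1+\cdots+w_n)$ with $n\geq 2$, split off $w_n$ to obtain two expansions, one of type the same as the original and one a singleton. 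The resulting forest is still AC-correct: cycles in switchings of the smaller forest would correspond to cycles in switchings of $F$.
\item Otherwise every root is a singleton expansion over an axiom variable, the constant $1:\top$, or a conjunctive expansion of the form $(t_1\otimes t_2)$. At least one such $\otimes$-rooted formula (or, failing that, a pair of forests separable by $\mix$) must be \emph{splitting}, so that its two immediate subtrees sit in disjoint subnets of $F$; applying the induction hypothesis to each subnet and then the $\land$ (or $\mix$) rule finishes the case.
\end{itemize}

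The main obstacle is the third case: locating a splitting $\land$, or otherwise a $\mix$-decomposition of $F$. This is precisely the splitting-tensor argument for $\MLLminus+\mix$, but the presence of arbitrary-arity expansions (which behave like $\Par$-nodes with unboundedly many successors) and of default weakenings (whose $*$-leaves are neither standard units nor free nodes) prevents a purely verbatim import from the linear-logic literature. The contiguous empire machinery of Section~\ref{sec:subnets} is developed exactly to handle this: it lets one identify, for each root of $F$, a canonical subnet ``generated'' by that root, and the splitting lemma then guarantees that among the $\land$-rooted trees at least one has both subtrees contained in its empire, exhibiting it as splitting. Once the splitting $\land$ (or the $\mix$-decomposition) is in hand, the induction goes through and the derivation can be assembled by concatenating the derivations produced by the induction hypothesis with the rule that was identified as ``last''.
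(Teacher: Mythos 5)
Your proposal takes essentially the same route as the paper, which proves this statement as Theorem~\ref{thm:seq}: a bottom-up induction in which disjunction roots and non-trivial expansions are always gates, disconnected nets are split by $\mix$, and the remaining case is resolved by a splitting $\ox$ located via the kingdom ordering $\ll$ and contiguous empires (Lemma~\ref{lem:splitting}); you additionally spell out the soundness direction, which the paper leaves implicit. The only imprecision is your closing gloss of ``splitting'' as having ``both subtrees contained in its empire'' (trivially true of every $\ox$-root) --- the correct criterion is the one you state earlier, namely that $F$ decomposes as $F_1, F_2, (t\ox s)$ with $F_1, t$ and $F_2, s$ separately $AC$-correct, every path between the two parts passing through the $\ox$ node itself.
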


This result can be proved via a number of techniques, including the
aforementioned ``splitting pars'' technique, or the earlier
``splitting tensors'' technique.  The latter was adapted for \MLL +
\mix\ by Bellin in~\cite{Bel97SubMix}.  In Section~\ref{sec:subnets}
below, we give a proof directly for expansion-nets which uses the new
notion of a contiguous subnet.

\subsection{Comparison with other notions of invariant}
It should be clear that expansion nets identify more proofs of
$\LKvar$ than na\"ive classical nets: two proofs differing only by the
order in which contractions are performed will have different na\"ive
nets but the same expansion net.  We take some time now to compare the
equivalence classes of proofs defined by expansion nets and the other
existing notions of abstract proof -- $\mathbb{N}$-nets and
combinatorial proofs (since $\mathbb{B}$-nets identify strictly more
proofs than $\mathbb{N}$-nets, we will not consider them further).  

\subsubsection{$\mathbb{N}$-nets identify more $\LK^*$ derivations
  than expansion nets}
To obtain an $\mathbb{N}$-net from a given derivation, one simply
traces paths from positive to negative atoms in the conclusion of the
proof.  For both $\LK^*$ derivations and expansion-nets, there is an
obvious way to this:  and it is not difficult to establish he
following:
\begin{proposition}
  Let $\Phi$ be an $\LK^*$ derivation, and let $F$ be its
  corresponding expansion net.  The $\mathbb{N}$-nets of $\Phi$ and
  $F$ coincide.
\end{proposition}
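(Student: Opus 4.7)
The plan is to prove this by a straightforward induction on the structure of the $\LK^*$ derivation $\Phi$, showing that the $\mathbb{N}$-net extracted directly from $\Phi$ agrees, multiset by multiset, with the one extracted from the expansion net $F$ that $\LKE$ produces. First I would pin down explicitly what it means to ``trace paths'' in each of the two settings. For an $\LK^*$ derivation, the $\mathbb{N}$-net of $\Phi$ is the multiset of all pairs $(a, \bar a)$ such that a positive atom occurrence $a$ and a dual negative occurrence $\bar a$ are introduced together at some axiom in $\Phi$ and then traced down to roots of the conclusion sequent through the (logical and structural) rules, where contractions merge two occurrences of the same formula into one, so the corresponding atom leaves in the conclusion each inherit the links originating from their respective predecessors in the proof. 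For an expansion net $F$, I would use the explicit wire variables: each axiom $(\bar x):\bar p, (x):p$ in $\LKE$ yields a pair $(x,\bar x)$ of dual wires, and the $\mathbb{N}$-net is read off from the conclusion simply by taking, for every pair of dual wire variables in $F$, a link between the unique conclusion-leaves at which they appear.

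Next I would do the induction. The base cases ($\Ax$ and $\Ax_\top$) are immediate: a single axiom gives one link in both constructions, and $\Ax_\top$ gives none. For the logical rules $\land$, $\lor$, $\lor_0$, $\lor_1$ and for $\mix$, no atom identifications occur; in $\LKE$ these rules simply combine existing typed forests, which does not alter the multiset of wire-variable pairs, and in $\Phi$ these rules similarly do not alter the multiset of axiom-traced pairs. So the induction hypothesis transfers immediately in each of these cases. The only substantive cases are the three contraction rules $\C_p$, $\C_{\bar p}$ and $\C_\land$: here the annotated sequent calculus replaces two sibling terms $s,t$ of the same type by the formal sum $s+t$. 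Traced through the extraction, this operation has the effect of unioning the multisets of wire-variable pairs emanating from $s$ and from $t$ into the single expansion now sitting at the conclusion; in $\Phi$, contraction has the analogous effect of merging the two occurrences of the contracted formula into one, whose links are the multiset union of the two predecessor link-multisets. Thus both operations produce the same multiset of links on the conclusion leaves.

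The step requiring slightly more care is showing that the graph-theoretic ``trace'' used to define an $\mathbb{N}$-net from $F$ really does produce the same multiset of root-atom pairs as reading off dual wire pairs directly from $F$. This is essentially because the axiom-wire edges added to form the graph of $F$ (Section 5) give, by construction, a bijection between dual wire-variable pairs in $F$ and atom-paths between dual leaves in the graph of $F$; once this is noted, the inductive argument goes through exactly as above.

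I do not expect any real obstacle here: the main conceptual work was already done when we introduced expansions as the canonical representation of contraction, so the identification of $\mathbb{N}$-nets is essentially forced. The only care needed is bookkeeping, to ensure that the same conclusion-leaves are being referred to on both sides after applying each rule. A reader familiar with the construction of $\mathbb{N}$-nets from sequent derivations in~\cite{LamStra05NamProCla} will find the induction entirely routine.
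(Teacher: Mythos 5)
Your proposal is correct. The paper in fact states this proposition without proof (remarking only that it ``is not difficult to establish''), and your induction over the $\LKvar$ derivation --- base case at the axioms, trivial transfer through the logical and $\mix$ rules, and multiset union of links at the contraction rules, together with the observation that dual wire-variable pairs in $F$ correspond bijectively to the traced atom-paths --- is exactly the routine argument the paper is alluding to.
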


This means that expansion-nets cannot distinguish two proofs
identified by their $\mathbb{N}$-nets; said differently, expansion
nets contain at least as much information as $\mathbb{N}$-nets.  In
fact, they contain strictly more information.  Consider the following
two sequent derivations proving the same sequent:
\[
\begin{prooftree}
  \[
  \[
  \[
  \bar{a}, a \justifies \bar{a} \lor \bar{b}, a \using \lor_0
  \] \qquad
  \[\bar{c}, c
  \justifies \bar{c} \lor \bar{d}, c \using \lor_0
  \]
  \justifies (\bar{a} \lor \bar{b}) \land (\bar{c} \lor \bar{d}), a
  \lor c \using \land,\lor
  \]
  \qquad
  \[
  \[\bar{b}, b
  \justifies \bar{a} \lor \bar{b} , b \using \lor_1\] \qquad
  \[\bar{d}. d
  \justifies \bar{c} \lor \bar{d}, d \using \lor_1\] \justifies
  (\bar{a} \lor \bar{b}) \land (\bar{c} \lor \bar{d}), b \lor d \using
  \land,\lor
  \]
  \justifies (\bar{a} \lor \bar{b}) \land (\bar{c} \lor \bar{d}),\
  (\bar{a} \lor \bar{b}) \land (\bar{c} \lor \bar{d}) , \ (a \lor c)
  \land (b \lor d) \using \land
  \]
  \justifies (\bar{a} \lor \bar{b}) \land (\bar{c} \lor \bar{d}), (a
  \lor c) \land (b \lor d) \using \C
\end{prooftree}
\]

\[
\begin{prooftree}
  \[
  \[
  \[
  a, \bar{a} \justifies a \lor c, \bar{a} \using \lor_0
  \] \qquad
  \[b, \bar{b} \justifies b \lor d, \bar{b} \using \lor_0
  \]
  \justifies (a \lor c) \land (b \lor d), \bar{a} \lor \bar{b} \using
  \land,\lor
  \]
  \qquad
  \[
  \[\bar{c}, c
  \justifies a \lor c , \bar{c} \using \lor_1\] \qquad
  \[d, \bar{d} \justifies b \lor d, \bar{d} \using \lor_1\] \justifies
  (a \lor c) \land (b \lor d), (\bar{c} \lor \bar{d}) \using
  \land,\lor
  \]
  \justifies (\bar{a} \lor \bar{b}) \land (\bar{c} \lor \bar{d}) , \
  (a \lor c) \land (b \lor d), (a \lor c) \land (b \lor d) \using
  \land
  \]
  \justifies (\bar{a} \lor \bar{b}) \land (\bar{c} \lor \bar{d}), (a
  \lor c) \land (b \lor d) \using \C
\end{prooftree}
\]

These two proofs have the same $\mathbb{N}$-net, with one link between
each pair of dual atoms, but different expansion nets:
\[
\begin{tikzpicture}[level distance=8mm, grow=up, sibling
    distance=30mm, edge from parent path= {[<-](\tikzparentnode) to
      (\tikzchildnode)}] 
 \node at (-4, 0) {$(\bar{a} \lor \bar{b}) \land (\bar{c} \lor \bar{d})$} 
    child {node {$+$} child{node {$\ox$} child [sibling
    distance=15mm]{node {$\lor$}
                                                     child [sibling
    distance=10mm] {node
                                                       {$\bar{w}$}} 
                                                     child [sibling
    distance=10mm] {node{{$*$}}}} 
                                          child [sibling
    distance=15mm] {node {$\lor$} child [sibling
    distance=10mm]
                                                {node {$\bar{z}$}} child [sibling
    distance=10mm] {node{$*$}}}}
child {node {$\ox$} child [sibling
    distance=15mm]{node {$\lor$}
                                                     child [sibling
    distance=10mm]{node
                                                       {$*$}} 
                                                     child [sibling
    distance=10mm] {node{$\bar{y}$}}} 
                                          child [sibling
    distance=15mm] {node {$\lor$} child [sibling
    distance=10mm]
                                                {node {$*$}} child [sibling
    distance=10mm] {node{$\bar{x}$}}}}};
\node at (1,0) {$(a
  \lor c) \land (b \lor d)$} child { node {$+$} child {node {$\ox$} child [sibling
    distance=15mm]{node {$\lor$}   child [sibling
                                     distance=10mm]{node {$w$}} 
                                   child [sibling
                                     distance=10mm] {node{$z$}}}
                       child [sibling
    distance=15mm] {node {$\lor$}  child [sibling
                                     distance=10mm] {node {$y$}} 
                       child [sibling
                                     distance=10mm] {node{$x$}}}}};
\end{tikzpicture}
\]

\[
\begin{tikzpicture}[level distance=8mm, grow=up, sibling
    distance=30mm, edge from parent path= {[<-](\tikzparentnode) to
      (\tikzchildnode)}] 
 \node at (4, 0) {$(a \lor c) \land (b \lor d)$} 
    child {node {$+$} child{node {$\ox$} child [sibling
    distance=15mm]{node {$\lor$}
                                                     child [sibling
    distance=10mm] {node
                                                       {$w$}} 
                                                     child [sibling
    distance=10mm] {node{{$*$}}}} 
                                          child [sibling
    distance=15mm] {node {$\lor$} child [sibling
    distance=10mm]
                                                {node {$z$}} child [sibling
    distance=10mm] {node{$*$}}}}
child {node {$\ox$} child [sibling
    distance=15mm]{node {$\lor$}
                                                     child [sibling
    distance=10mm]{node
                                                       {$*$}} 
                                                     child [sibling
    distance=10mm] {node{$y$}}} 
                                          child [sibling
    distance=15mm] {node {$\lor$} child [sibling
    distance=10mm]
                                                {node {$*$}} child [sibling
    distance=10mm] {node{$x$}}}}};
\node at (-1,0) {$(\bar{a}
  \lor \bar{b}) \land (\bar{c} \lor \bar{d})$} child { node {$+$} child {node {$\ox$} child [sibling
    distance=15mm]{node {$\lor$}   child [sibling
                                     distance=10mm]{node {$\bar{w}$}} 
                                   child [sibling
                                     distance=10mm] {node{$\bar{z}$}}}
                       child [sibling
    distance=15mm] {node {$\lor$}  child [sibling
                                     distance=10mm] {node {$\bar{y}$}} 
                       child [sibling
                                     distance=10mm] {node{$\bar{x}$}}}}};
\end{tikzpicture}
\]

Identifying these two proofs, as suggested by their $\mathbb{N}$-nets, does not seem
at all natural in the multiplicatively formulated sequent calculus (it
arises very naturally, however, in the \emph{deep inference} proof
theory of classical logic
\cite{BruLCL06,BruTh}, which
provided inspiration for the design of $\mathbb{N}$-nets.) In light of
this, and the sequentialization theorem, we claim that expansion-nets
provide a better notion of abstract proof for sequent proofs than
$\mathbb{N}$-nets.

\subsubsection{Combinatorial proofs identify at least as many $\LKvar$
  derivations as expansion nets}

To see how to extract a combinatorial proof from an expansion-net,
we will need the following intuitive notion: an
expansion tree $F$ of type $\G$ induces a function $f$ from the wire
variables of $F$ to the leaves (atom occurrences) of $\G$.  This
function arises in much the same way as the $\mathbb{N}$-net of an
expansion-net: by tracing the atoms through the tree.   
Given an expansion net $F$, extract a co-graph
from $F$ as follows: the vertices of the co-graph are the wire
variables of $F$, and there is an edge between two wire variables if
and only if smallest subtree of $F$ containing both variables is an
$\ox$ tree.  The function $f$ from wire variables to atoms is a
contraction-weakening,
by the structure of propositional expansion trees, and so the pair of
co-graph and function given by an expansion-net defines a correct
combinatorial proof.  For example, the expansion-net in Example~\ref{ex:forestexample} yields the
(semi-)combinatorial proof in Example~\ref{ex:combproof}.

This
combinatorial proof is the same proof as would be extracted directly
from an $\LKvar$ derivation giving rise to $F$: thus combinatorial
proofs identify, at the very least, all the proofs identified by
expansion-nets.  It is likely that, in fact, combinatorial proofs
identify the same $\LKvar$ derivations as expansion-nets; if so, this would
provide a criterion identifying just those combinatorial proofs
arising from $\LKvar$ derivations.

\section{Subnets of expansion nets}
\label{sec:subnets}
 In the sequent
calculus, we have a clear notion of ``subproof of a sequent proof'',
given by subtrees.
In proof-nets,
it is harder to see, intuitively, the correct notion of subproof, and this
causes a number of conceptual problems when manipulating proofs. The
notion of ``subnet'' captures, in proof nets, the concept of subproof.

Subproofs play two important roles in the proof theory of classical
logic.  The first is that proving cut-elimination often relies on a
principal lemma in  which it is shown that a single cut can be
eliminated from an otherwise cut-free proof: in this case the cut is
always the final rule in the proof.  Full cut-elimination then follows
by considering uppermost cuts: the subproof introducing an uppermost
cut contains no other cuts.  In proof nets, there is
no clear notion of uppermost cut, or of the subproof containing a cut.
It is with a view to obtaining such a notion that we define the
subnets of a net.
 
The second role that subproofs play is in the definition of
cut-reduction steps, where one of the cut-formulae is the result of a
structural rule.  For example, the usual way to reduce a cut against
contraction, such as
\begin{equation}
  \begin{prooftree}
    \[\leadsto
    F,  A \using \Phi
    \]
    \qquad
    \[
    \[
    \leadsto \quad F', \bar{A}, \bar{A}\quad \using
    \Psi
    \]
    \justifies F, \bar{A} \using C \] \using \Cut
    \justifies F, F'
  \end{prooftree}
  \label{eq:structredex}
\end{equation}
\noindent is to duplicate the subproof $\Phi$, and then contract the
resulting duplicated conclusions:
\begin{equation}
  \begin{prooftree}
\[
\[
    \leadsto
    F,  A \using \Phi
    \]
\qquad 
    \[
        \[
        \leadsto
         F,  A \using \Phi
         \]
    \qquad
         \[
         \leadsto F' \bar{A}, \bar{A} \qquad \using
         \Psi
         \]
    \justifies F, F', \bar{A}  \using \Cut 
    \] 
\using \Cut
\justifies F,F, F'
\]
\justifies
F, F'
\using \C^*
  \end{prooftree}
  \label{eq:structredex}
\end{equation}

To perform such an operation in proof-nets requires that we know what
a subproof is, and can find them.  In linear logic proof nets with
exponentials, duplication and deletion are typically mediated by
\emph{boxes} -- that is, the subgraphs to be duplicated are explicitly
marked regions of the net.  Expansion-nets are box-free, and so the
appropriate subgraph to delete or duplicate must be calculated;
further, we must ensure that this duplication or deletion does not
break correctness. 

A \emph{subnet} of an
expansion-net $F$ (a concept first introduced for \MLLminus nets
in~\cite{212898}) is a graph corresponding to a subproof of $F$: we
define them as follows:


\begin{definition}
  Let $F$ be a typed forest: a substructure of $F$ is a subforest
  $G$ of $F$ which is 
  \begin{itemize}
  \item closed under axiom links: that is, if the leaf annotated with
    $x$ is in $G$, then so is the leaf annotated with $\bar{x}$.
\item closed under default attachment: that is, if an instance of $*$
  occurs in $G$, then its predecessor $(t \lor *)$ or $(* \lor t)$ is
  in $G$.
  \end{itemize}
  If $X$ is a node of $F$, let $\str(X)$ be the smallest substructure
  of $F$ containing $X$.
\end{definition}


\begin{definition}
\label{def:subnet}
  Let $F$ be an $AC$ typed forest: a \emph{subnet} of $F$ is a
  substructure $G$ of $F$ such that, 
  for any two roots $X$, $Y$ of $G$, every switching path
   between $X$ and $Y$ lies inside $G$.
\end{definition}

  A more obvious (but incorrect) notion of subnet for expansion-nets
  would be, simply, a subforest which is, itself, an expansion net.
  This simplistic kind of definition works for $\MLLminus$ proof-nets,
  for example.   Consider, however, the following sequent proof in classical logic:

\[\pi = 
\begin{prooftree}
\[
\[
\[
\[
\[ \justifies \bar{p}, p \using \Ax\] \quad \[  \justifies \bar{p}, p \using \Ax \]
\justifies 
(\bar{p} \land \bar{p}), p, p
\using \C
\]
\justifies
(\bar{p} \land \bar{p}), p
\using
\C
\]
\quad\[ \justifies \bar{p}, \quad p  \using \Ax \]
\justifies
(\bar{p} \land \bar{p}), \bar{p}, \quad p, p
\using \mix
\]
\justifies
(\bar{p} \land \bar{p}) \lor \bar{p}, \quad p, p 
\using \lor
\]
  \justifies 
(\bar{p}\land \bar{p})\lor\bar{p},  \quad p  
\using \C
\end{prooftree}
\]

The expansion net $F$ corresponding to $\pi$ is:

\begin{equation}
  \begin{tikzpicture}[level distance=6.5mm, grow=up, sibling
    distance=20mm, edge from parent path= {[<-](\tikzparentnode) to
      (\tikzchildnode)}]] 
    \node at (-2, 0) {$(\bar{p} \land \bar{p})\lor \bar{p}$} 
      child {node{$\lor$}  
          child [level distance=4.5mm]{ node {$+$} 
            child [level distance=6mm]{node (bary) {$\var{\bar{y}}$}}}
        child [level distance=4mm]{node {$+$} child {node{$\ox$} 
            child [level distance=6mm]{ node {$+$} 
              child {node (barz) {$\var{\bar{z}}$}}}
            child [level distance=6mm] { node {$+$} 
              child {node (barx) {$\var{\bar{x}}$}}}}}
      };
    \node at (2,1) {$p$} 
    child {node {$+$} 
      child {node (y) {$\var{y}$}}
      child {node (z) {$\var{z}$}}
      child {node (x) {$\var{x}$}}
    };
    \draw [->] (x) [in=30, out=150] to (barx);
    \draw [->] (y) [in=50, out=150] to (bary);
     \draw [->] (z) [in=30, out=150] to (barz);
\end{tikzpicture}\label{eq:1}
\end{equation}

Now consider the sub-proof of the sequent proof proving $p\land p, p, p$.
The expansion-net corresponding to that proof is the following:
\begin{equation}
  \begin{tikzpicture}[level distance=6.5mm, grow=up, sibling
    distance=20mm, edge from parent path= {[<-](\tikzparentnode) to
      (\tikzchildnode)}] 
    \node at (-2, 0) {$(\bar{p} \land \bar{p})$} 
        child [level distance=6mm]{node {$+$} child {node{$\ox$}
            child [level distance=6mm]{ node {$+$} 
              child {node (barz) {$\var{\bar{z}}$}}}
            child [level distance=6mm] { node {$+$} 
              child {node (barx) {$\var{\bar{x}}$}}}}};
    \node at (1.5,0) {$p$} 
    child {node {$+$} 
      child {node (y) {$\var{x}$}}};
\node at (2.5,0) {$p$} 
    child {node {$+$} 
      child {node (z) {$\var{z}$}}};
\end{tikzpicture}\label{eq:9}
\end{equation}

This does not appear as a subforest of $F$; in other words,
the subforests of $F$ which are themselves expansion-nets do not 
suffice to express the sub-proofs of $F$.  The subnet corresponding to 
the subproof is in this case \emph{not} an expansion net: it is the
shaded subgraph in the following:
\begin{equation}
  \begin{tikzpicture}[level distance=6.5mm, grow=up, sibling
    distance=20mm, edge from parent path= {[<-](\tikzparentnode) to
      (\tikzchildnode)}]] 
    \node at (-2, 0) {$(\bar{p} \land \bar{p})\lor \bar{p}$} 
      child {node{$\lor$}  
          child [level distance=4mm]{ node {$+$} 
            child [level distance=6mm]{node (bary) {$\var{\bar{y}}$}}}
        child [level distance=7mm]{node (plus){$+$} child {node{$\ox$} 
            child [level distance=6mm]{ node {$+$} 
              child {node (barz) {$\var{\bar{z}}$}}}
            child [level distance=6mm] { node {$+$} 
              child {node (barx) {$\var{\bar{x}}$}}}}}
      };
    \node at (2,0.5) {$p$} 
    child {node {$+$} 
      child [level distance=4mm]{node (y) {$\var{y}$}}
      child [level distance=19mm] {node (z) {$\var{z}$}}
      child [level distance=19mm] {node (x) {$\var{x}$}}
    };
    \draw [->] (x) [in=30, out=150] to (barx);
    \draw [->] (y) [in=30, out=150] to (bary);
     \draw [->] (z) [in=30, out=150] to (barz);
 \begin{pgfonlayer}{background}
       \node [fill=black!20,fit=(plus) (barz) (barx)] {}; 
        \node [fill=black!20,fit=(z) (x)] {}; 
     \end{pgfonlayer}
\end{tikzpicture}\label{eq:7}
\end{equation}
\noindent or, alternatively, $(\bar{x}+\bar{y}):\bar{p} \land \bar{p},
x:[p] , y:[p]$, which is not an expansion net, as it has witnesses as
roots.

Now consider the following shaded substructure of $F$, which is \emph{not} a
subnet of $F$:

\begin{equation}
  \begin{tikzpicture}[level distance=6.5mm, grow=up, sibling
    distance=20mm, edge from parent path= {[<-](\tikzparentnode) to
      (\tikzchildnode)}]] 
    \node at (-2, 0) {$(\bar{p} \land \bar{p})\lor \bar{p}$} 
      child {node{$\lor$}  
          child [level distance=4mm]{ node {$+$} 
            child [level distance=6mm]{node (bary) {$\var{\bar{y}}$}}}
        child [level distance=4mm]{node (plus){$+$} child {node{$\ox$} 
            child [level distance=6mm]{ node {$+$} 
              child {node (barz) {$\var{\bar{z}}$}}}
            child [level distance=6mm] { node {$+$} 
              child {node (barx) {$\var{\bar{x}}$}}}}}
      };
    \node at (2,1) {$p$} 
    child {node {$+$} 
      child {node (y) {$\var{y}$}}
      child {node (z) {$\var{z}$}}
      child {node (x) {$\var{x}$}}
    };
    \draw [->] (x) [in=30, out=150] to (barx);
    \draw [->] (y) [in=50, out=150] to (bary);
     \draw [->] (z) [in=30, out=150] to (barz);
 \begin{pgfonlayer}{background}
       \node [fill=black!20,fit=(barz) (barx)] {}; 
        \node [fill=black!20,fit=(z) (x)] {}; 
     \end{pgfonlayer}
\end{tikzpicture}\label{eq:11}
\end{equation}
This typed forest satisfies the AC correctness criterion: each of its
switching graphs is acyclic.  However, it is not a subnet of $F$,
since there is a switching path from $\bar{x}$ to $\bar{z}$ which
passes outside the shaded substructure.  This shaded substructure does
\emph{not} correspond to any subproof of $\pi$, nor of any other
sequentialization of $F$.  For more discussion on subnets in the
presence of the mix rule, see~\cite{Bel97SubMix}.



\subsection{Kingdoms and Empires}
\label{sec:kingdoms-empires}

Given a proper node $X$ (that is, a node which is not an instance of
$*$), the subnets with $X$ as a root correspond to subproofs with $X$
in the conclusion.  

 

Given any proper node $X$, the set of subnets with $X$ as a root are
closed under intersection:
\begin{lemma}
\label{lem:inserseckingdoms}
  Let $G_1$ and $G_2$ be subnets of an AC typed forest $F$ having the
  node $X$ as a root.  Let $G_1 \cap G_2$ denote the substructure of
  $F$ defined by the nodes of $F$ common to $G_1$ and $G_2$.  Then
  $G_1 \cap G_2$ is a subnet of $F$.
\end{lemma}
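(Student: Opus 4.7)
The plan is to reduce the question to an extension of the subnet property that applies to paths between arbitrary nodes of a subnet, not only between its roots. Concretely, I will first prove the following extension lemma: if $G$ is a subnet of $F$ and $Y_1, Y_2 \in G$ are any two nodes, then every switching path between $Y_1$ and $Y_2$ lies inside $G$. With this in hand, the conclusion follows in a few lines.

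For the extension lemma, the key observation is that the substructure closures (subforest, axiom links, default attachment) force every edge of $\sigma(F)$ with one endpoint $Z \in G$ and one endpoint outside $G$ to be either an edge from $Z$ up to its predecessor in $F$ (with that predecessor outside $G$) or, in the implicit $\ast$-attachment case, an edge between a companion $t$ and its $\ast$ where the enclosing node $(t \lor \ast)$ is absent from $G$. In each case $Z$ must in fact be a root of $G$: successor edges retained by the switching lead to descendants of $Z$, which lie in $G$ by subforest-closure; axiom edges are internal by the axiom-link closure; and in the implicit $\ast$-to-companion subcase the default-attachment and subforest closures force $(t \lor \ast) \notin G$, making $t$ a root. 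Hence any switching path $P$ from $Y_1$ to $Y_2$ that exits $G$ must first exit through some root $R$ of $G$ and later re-enter through some distinct root $R'$; the subpath from $R$ to $R'$ is then a switching path between two roots of $G$ that leaves $G$, contradicting the subnet property of $G$.

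Applying the extension lemma, I first check that $G_1 \cap G_2$ is a substructure: it is nonempty because $X \in G_1 \cap G_2$, a subforest because descendant-closure is preserved under intersection, and closed under axiom links and default attachment for the same reason. Next, any root $Y$ of $G_1 \cap G_2$ is a root of $G_1$ or a root of $G_2$, since otherwise the predecessor of $Y$ in $F$ would lie in both $G_1$ and $G_2$, hence in $G_1 \cap G_2$, contradicting that $Y$ is a root of the intersection. Finally, given two roots $Y_1, Y_2$ of $G_1 \cap G_2$ and a switching path $P$ between them, the extension lemma applied to $G_1$ gives $P \subseteq G_1$ (since $Y_1, Y_2 \in G_1$), and applied to $G_2$ gives $P \subseteq G_2$; so $P \subseteq G_1 \cap G_2$, as required.

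The main obstacle is the extension lemma itself, where the case analysis on edge-types in the switching graph must be carried out against each of the three substructure closures; the subtle case is the implicit $\ast$-to-companion edge, where one must combine $t \in G$ with $\ast \notin G$ to force the enclosing $(t \lor \ast)$-node out of $G$ and thereby make $t$ a root of $G$.
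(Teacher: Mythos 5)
Your proof is correct, and it is built on the same germ of an idea as the paper's: a switching path that exits a subnet must do so through a root (the three substructure closures kill every other kind of boundary edge), so an excursion outside the subnet yields a sub-path between two distinct roots lying outside it, contradicting the subnet condition. Where you differ is in the packaging. The paper applies this idea directly to $G_1\cap G_2$, arguing by a WLOG case split on which of $G_1$, $G_2$ each endpoint of the offending path is a root of, and leaves the boundary analysis and the substructure verification implicit; as written, its first case (``both endpoints are roots of $G_1$'') even conflates ``outside $G_1\cap G_2$'' with ``outside $G_1$''. You instead isolate a clean intermediate lemma --- a subnet contains every switching path between \emph{any} two of its nodes, not just between its roots --- after which the intersection statement is a two-line corollary needing no case analysis at all, and you explicitly check that $G_1\cap G_2$ is a substructure. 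This costs a little more work up front (the edge-by-edge boundary analysis, including the $*$-attachment case, which you handle correctly) but buys a reusable closure property and a proof that is uniformly robust over all the cases the paper's WLOG has to juggle. The only point worth making explicit is why the exit root $R$ and re-entry root $R'$ are distinct: since a switching path does not repeat nodes, $R=R'$ is impossible, so the sub-path from $R$ to $R'$ really is a path between two roots.
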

\begin{proof}
  Suppose there is a switching path $P$ between two roots of $G_1 \cap
  G_2$ but outside of $G_1\cap G_2$.  If both $X$ and $Y$ are roots of
  $G_1$, then $G_1$ is not a subnet, similarly for $G_2$: therefore
  without loss of generality $X$ is only a root of $G_1$, and $Y$ only
  a root
  of $G_2$.  Since $Y$ is not a root of $G_1$, $P$ passes through some
  root of $G_1$: but then the path from that root to $X$ is a path
  between two roots of $G_1$, outside of $G_1$, and so $G_1$ is not a subnet.
 \end{proof}

This means, particular, that we can consider the smallest subnet with
$X$ as a root, given by taking the intersection of all such subnets : the
following terminology originates in~\cite{212898}.

\begin{definition}
  Let $F$ be an AC typed forest, and let be $X$ a node of $F$, such
  that at least one subnet of $F$ has $X$ as a root.  The kingdom
  $k(X)$ of $X$ in $F$ is the smallest subnet of $F$ which has $X$ as
  a root.
\end{definition}
\noindent Notice that, by this definition, only proper nodes can have
a kingdom or empire: there is no substructure of any expansion net
having a $*$ as a root.

\begin{example}
  The shaded net shown in example \ref{eq:7} is the kingdom
  of its leftmost root.
\end{example}

Kingdoms are of interest because they allow us to see additional dependencies
between nodes in an expansion-net.  If in an expansion-net
$F$ a node $Y$ is in the kingdom of a node $X$, then in every
sequentialization of F (every $\LKE$ derivation resulting in $F$) the
rule introducing $Y$ will occur in a subproof of the rule introducing
$X$.  We will use the relation symbol $\ll$ to denote this
\emph{kingdom ordering}:
\[ X \ll Y \mbox{ if and only if $X$ is in the kingdom of $Y$}.\]
\noindent This relation plays a key role in our proof of
cut-elimination for expansion-nets (Theorem~\ref{thm:cut-elim}).  It
allows us to recover a notion of ``uppermost cut'' in an expansion
net: a cut which is $\ll$-maximal corresponds to a cut which can be
sequentialized such that no other cut lies above it.


The relation $\ll$ also plays an important role in our proof of
sequentialization for expansion-nets (Theorem~\ref{thm:seq}).  In
fact, sequentialization is nothing more than the completion of the relation $\ll$
to a tree-relation on the nodes of an expansion net. We will need, in
the proof of the sequentialization theorem, the following fact: two
nodes of an AC typed forest have the same kingdom if and only if they
are a pair of dual wire variables.
\begin{proposition}
\label{prop:kingdomordering}
$\ll$ is a preorder on the proper nodes of an AC typed forest $F$, and
moreover is a partial order on the nonatomic proper nodes of $F$.
\end{proposition}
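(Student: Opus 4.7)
The plan is to verify reflexivity and transitivity of $\ll$ directly from the definition of kingdom, and to handle antisymmetry on nonatomic proper nodes separately, using the auxiliary fact that two nodes of an AC typed forest share a kingdom only when they coincide or form a pair of dual wire variables (flagged just before the proposition). Reflexivity is immediate: by definition $k(X)$ has $X$ as a root, so $X \in k(X)$ and $X \ll X$.

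For transitivity, I assume $X \ll Y$ and $Y \ll Z$, i.e.\ $X \in k(Y)$ and $Y \in k(Z)$, and aim to establish $k(Y) \subseteq k(Z)$; this gives $X \in k(Z)$ and hence $X \ll Z$. The idea is to consider $G := k(Y) \cap k(Z)$ and argue that $G$ is a subnet of $F$ having $Y$ as a root; the minimality in the definition of $k(Y)$ then gives $k(Y) \subseteq G \subseteq k(Z)$. That $Y$ is a root of $G$ follows because $Y \in k(Y) \cap k(Z)$ while $Y$'s forest-predecessor is absent from $k(Y)$ (where $Y$ is a root), hence absent from the intersection. The main obstacle is the subnet verification for $G$: I would adapt the proof of Lemma~\ref{lem:inserseckingdoms}, using the observation that a switching path can leave a substructure only through the forest-predecessor of a root (substructures are closed under descendants, axiom partners, and default attachments, so every other edge out of an interior node stays inside). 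A hypothetical switching path between two roots of $G$ with some node outside $G$ therefore decomposes into excursions, each lying strictly outside $k(Y)$ or strictly outside $k(Z)$; each exit point must be a root of the corresponding larger subnet, and the same for each re-entry point, so the excursion is a switching path between two roots of $k(Y)$ outside $k(Y)$, or two roots of $k(Z)$ outside $k(Z)$, contradicting the subnet property of that larger subnet.

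For antisymmetry on nonatomic proper nodes, suppose $Y \ll Z$ and $Z \ll Y$ with $Y, Z$ nonatomic and proper. Transitivity gives $k(Y) \subseteq k(Z) \subseteq k(Y)$, so $k(Y) = k(Z)$. The dual-wire-variable case of the auxiliary kingdom-sharing fact is excluded by nonatomicity, forcing $Y = Z$ and hence antisymmetry on this subclass.
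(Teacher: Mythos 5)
Your reflexivity and transitivity arguments are sound, and in fact supply detail that the paper dismisses with the word ``clearly'' (your observation that a switching path can leave a substructure only through the predecessor edge of one of its roots is exactly the right tool for adapting Lemma~\ref{lem:inserseckingdoms} to the case where $Y$ is a node, but not necessarily a root, of $k(Z)$). The problem is the antisymmetry step, which is circular. The ``auxiliary fact'' you invoke --- that two nodes share a kingdom only if they coincide or are dual wire variables --- is not an established lemma: the paper merely \emph{announces} it in the sentence before the proposition (``We will need, in the proof of the sequentialization theorem, the following fact\dots''), and the proof of this very proposition is where that fact gets established. The hard direction of the fact (a nonatomic node cannot share its kingdom with a distinct node) is literally equivalent to the antisymmetry claim you are trying to prove, so citing it assumes the conclusion.

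What is actually needed, and what the paper supplies, is a case analysis on the shape of the nonatomic node $X$ once one has $k(X)=k(Y)$ with $X\neq Y$. If $X$ is a disjunction or a nontrivial expansion, then deleting $X$ from $k(Y)$ while retaining its successors yields a strictly smaller subnet still having $Y$ as a root, contradicting minimality of $k(Y)$. If $X = X_1 \ox X_2$, one uses the decomposition $k(X) = k(X_1)\cup k(X_2)\cup \finset{X}$ to place $Y$ inside some $k(X_i)$; combined with $k(X_i)\subseteq k(Y)$ this forces $k(X_i)=k(Y)=k(X)$, which is impossible since $X\notin k(X_i)$. Your write-up contains none of this, so the substantive content of the ``moreover'' clause is unproved.
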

\begin{proof}
  The relation $\ll$ is clearly reflexive and transitive.  We show
  that it is antisymmetric if restricted to the nonatomic nodes of $F$.
  Suppose that $X$ and $Y$ are distinct nodes of $F$, and that $X \in
  k(Y)$ and $Y \in k(X)$.  Then clearly $k(X)=k(Y)$, since otherwise
  the intersection of $k(X)$ and $k(Y)$ would be a smaller subnet with
  both $X$ and $Y$ as roots.  This equality holds in the case where
  $X$ and $Y$ are dual wire variables: the two ends of a wire arising
  from an axiom link: we must now show that it cannot hold if either
  $X$ or $Y$ is nonatomic.  Suppose first that $X$ is a disjunction or
  nontrivial expansion; then by removing $X$ from $k(Y)$ (but keeping
  its successors) we find a smaller subnet with $Y$ as a root:
  contradiction.  Now suppose that $X = (X_1 \ox X_2)$.  Then $k(X) =
  k(X_1) \cup k(X_2) \cup \finset{X}$, and so $Y$ is a member of
  $k(X_i)$ for $i \in \finset{0,1}$.  Since $Y\in k(X_i)$ and $k(X_i)
  \in k(Y)$, we have as above that $k(X_i) = k(Y)$.  But $X \notin
  k(X_i)$; contradiction.
\end{proof}

We have not yet shown that every proper node of an expansion-net has a
kingdom.  Bellin shows directly in~\cite{Bel97SubMix} that every node
has a kingdom, but this is a rather difficult proof: for an easier
proof we turn now to the new notion of contiguousness.

\subsection{Contiguous subnets}
\label{sec:contiguous-subnets}
A natural counterpart to the notion of kingdom, the smallest subnet
with a given node as root, is the notion of empire:

\begin{definition}
  \label{def:empire}
Let $F$ be an AC-correct typed forest, and $X$ a proper node of $F$.
The \emph{empire} $e(X)$ of $X$ in $F$ is the largest subnet of $F$ with $X$ as a root.
\end{definition}

\begin{example}
Continuing our example from above, the shaded subnet in the following
is the empire of its leftmost root:
\begin{equation}
  \begin{tikzpicture}[level distance=6.5mm, grow=up, sibling
    distance=20mm, edge from parent path= {[<-](\tikzparentnode) to
      (\tikzchildnode)}]] 
    \node at (-2, 0) {$(\bar{p} \land \bar{p})\lor \bar{p}$} 
      child {node{$\lor$}  
          child [level distance=4mm]{ node (aplus){$+$} 
            child [level distance=6mm]{node (bary) {$\var{\bar{y}}$}}}
        child [level distance=6mm]{node (plus){$+$} child {node{$\ox$} 
            child [level distance=6mm]{ node {$+$} 
              child {node (barz) {$\var{\bar{z}}$}}}
            child [level distance=6mm] { node {$+$} 
              child {node (barx) {$\var{\bar{x}}$}}}}}
      };
    \node at (2,1) {$p$} 
    child {node {$+$} 
      child {node (y) {$\var{y}$}}
      child {node (z) {$\var{z}$}}
      child {node (x) {$\var{x}$}}
    };
    \draw [->] (x) [in=30, out=150] to (barx);
    \draw [->] (y) [in=50, out=150] to (bary);
     \draw [->] (z) [in=30, out=150] to (barz);
 \begin{pgfonlayer}{background}
       \node [fill=black!20,fit=(plus) (barz) (barx)] {}; 
        \node [fill=black!20,fit=(z) (x)(y)] {};
        \node [fill=black!20,fit=(aplus) (bary)] {};
     \end{pgfonlayer}
\end{tikzpicture}\label{eq:11}
\end{equation}

\end{example}

In the absence of the mix rule (that is, if we assume that every
switching graph is not only acyclic, but also connected), the empire
is a very useful concept: it is very easy to show that every proper
node of an AC-correct typed forest has an empire (indeed, there is a
simple inductive definition of the empire of a node, see
\cite{212898}).  However, the simple proof of the existence of the
empire does not carry over for proof nets with mix.  In addition, the
very notion of ``empire'' is less appealing in the presence of mix.
Without mix, we have that the union of two intersecting subnets is a
subnet, and therefore that the empire of a node $X$ exists if any
subnet with $X$ as a root exists.  Furthermore, we have the following
``simultaneous empire lemma'': if $X$ and $Y$ are two proper nodes,
and $Y$ is not in $e(X)$, then either $e(X) \subset e(Y)$ or $e(X)\cap
e(Y) = \emptyset$.  The following example shows that neither of these
properties hold in the presence of mix:
\begin{example}
\label{ex:badempires}
  Consider the following expansion net, which cannot be derived with
  the \mix rule:
\[
\begin{tikzpicture}[level distance=6mm, grow=up, sibling
    distance=20mm, edge from parent path= {[<-](\tikzparentnode) to
      (\tikzchildnode)}]] 
  \node at (-2,0) (p) {$p$} child {node (x){$(x)$}};
\node at (0,0) (barp) {$\bar p\land q$} child {node{$+$} child {node
    {$\ox$} child {node (y){$(y)$}} child {node (barx){$(\bar x)$}}
            }};
\node at (2,0) (barq) {$\bar q$} child {node (bary){$(\bar y)$}};
\node at (3,0) (r) {$r$} child {node (z) {$(z)$}};
\node at (4,0) (barr) {$\bar r$} child {node (barz){$(\bar z)$}};
\draw[->, in = 90, out = 150] (barx) to (x);
\draw[->, in = 30, out = 90] (bary) to (y);
\draw[->, in = 30, out = 150] (barz) to (z);
\end{tikzpicture}
\]
\noindent The empire of $(\bar x)$ is $(x), (\bar{x}), (z),
(\bar{z})$.  Similarly, the empire of  $(y)$ is $(y), (\bar{y}), (z),
(\bar{z})$.  However, the union of those two subnets is not a subnet,
since there is a switching path from $(\bar{x})$ to $(y)$ outside of it.
Notice also that, while $(\bar x)$ is not in $e((y))$, and  $(y)$ is
not in $e((\bar{x}))$, the two empires intersect (that is, the
simultaneous empire property fails).
\end{example}

In this section we define a more appealing counterpart to the empire
for proof-nets with mix: the ``contiguous empire'' of a node.  It is
easier to show directly that each node has a contiguous empire than to show
directly that each node has a kingdom: moreover, the notion is useful
in proving sequentialization of expansion-nets, and allows us to
define in Section~\ref{sec:basic-cut-reduction} a more pleasing
notion of cut-reduction.

The new notion we introduce, to define the contiguous empire, is the 
property that an AC typed forest is \emph{contiguous} with respect to
one of its roots:
\begin{definition}
\begin{enumerate}
\item Let $F$ be an $AC$ typed forest, and let $X$ be a root of $F$.
  We say that $F$ is \emph{contiguous with respect to $X$} if there is
  a switching path from $X$ to every other node $Y$ of $F$.

\item Let $F$ be an AC typed forest and let $X$ be any proper node of
  $F$.  The \emph{contiguous empire} of $X$ is defined to be the
  largest subnet of $F$ having $X$ as a root which is contiguous with
  respect to $X$.
\end{enumerate}
\end{definition}

\begin{example}
  The expansion net shown in Example~\ref{ex:badempires} is not
  contiguous with respect to any of its roots.  Neither is the empire
  of $(\bar x)$ contiguous with respect to $(\bar x)$.  The
contiguous empire of $(\bar x)$ is $(\bar x), (x)$.
\end{example}

As we will see later, the kingdom of a node is always contiguous, and
so there is no need to consider a concept of ``contiguous kingdom''.
The advantage of the contiguous empire over the usual empire is that
it admits a simple definition, which is a minor variation on the
inductive definition of empires in $ACC$ nets found in~\cite{212898}:

\begin{definition}
  Let $F$ be an $AC$ typed forest and let $X$ be a proper node of $F$. We
  define the substructure $ce(X)$ as the smallest substructure of $F$
  containing $X$ and satisfying the following:
  \begin{itemize}
  \item[$(\ox)$] If $Y = t \ox s$ is a node of $F$, if $t\in ce(X)$ or
    if $s \in ce(X)$, and if and $t,s \neq X$, then $Y$ is in $ce(X)$;
  \item[$(W)$] If $Y = (t \lor *)$ (resp. $(* \lor t)$) is a node of $F$,
    if $t\neq X$, and if $t \in ce(X)$, then $Y$ is
    in $ce(X)$;
  \item [($\Par$ 1)]If $Y$ is a switched node of $F$, and if all the
    successors of $Y$ are in $ce(X)$ and not equal to $X$, then $Y \in
    ce(X)$.
  \item[($\Par$ 2)]If $Y$ is a switched node of $F$, if none of the
    successors of $Y$ are equal to $X$, and if one of the successors
    of $Y$ is in $ce(X)$, then $Y \in ce(X)$ if there is a switching
    path from $X$ to $Y$ which does not pass through any of the
    successors of $Y$ (that is, the path passes into $Y$ from below).
  \end{itemize}
\end{definition}
\begin{remark}
Items $(\ox)$, $(W)$ and $(\Par 1)$ in the above definition are
derived from Girard's inductive definition of the empire in an ACC
net, as described in~\cite{212898}.  This inductive definition fails
in the presence of mix: consider, for example, the following expansion-net (based on an
example from~\cite{Bel97SubMix}):
\[
\begin{tikzpicture}[level distance=7mm, grow=up, sibling
    distance=25mm, edge from parent path= {[<-](\tikzparentnode) to
      (\tikzchildnode)}]]
  \node at (0,0) {$(\bar s \lor \bar q)\land( \bar p \lor \bar r)$}
       child {node {$+$} child {node {$\ox$} child {node {$\lor$} 
                            child [sibling
    distance=15mm]{node  {$(\bar{w})$}}
                            child [sibling
    distance=15mm]{node {$(\bar{x})$}}}
                         child {node {$\lor$} 
                            child [sibling
    distance=15mm]{node  {$(\bar{y})$}}
                            child [sibling
    distance=15mm]{node {$(\bar{z})$}}}}};
\node at (-3,1) {$s$}
       child {node {$(z)$}};
\node at (3,1) {$r$}
       child {node {$(w)$}};
\node at (5.5,0.6) {$p \lor q$}
       child {node {$\lor$} child {node{$(y)$}}
                            child {node{$(x)$}}};
\end{tikzpicture}
\]
Applying the (faulty) inductive definition of empire to the rightmost
root $((x) \lor (y))$, we only obtain
the substructure $((x) \lor (y)), (\bar{x}), (\bar{y})$, which is not a
subnet, since there is a switching path through the $\ox$ node from 
$(\bar x)$ to $(\bar{y})$.  However, by using the novel extra
condition $(\Par 2)$, we can observe that since  there is a switching path from
 $((x) \lor (y))$ to $((\bar x) \lor (\bar{w}))$ from below (i.e., via 
$(y)$, $(\bar y)$, and $((\bar{z}) \lor (\bar y))\ox ((\bar x) \lor
(\bar{w}))$),  $((\bar x) \lor (\bar{w}))$ is in the contiguous empire
of $((x) \lor (y))$.  Similarly, $((\bar{z}) \lor (\bar y))$ is in
$ce(((x) \lor (y)))$: from which, applying the other conditions, we
obtain that $ce(((x) \lor (y)))$ is the whole net.
\end{remark}


From the definition of $ce(X)$, we can not immediately see that it is contiguous
with respect to $X$: it is clear that there is a switching path from
$X$ to $Y$ in $F$ for every $Y$ in $ce(X)$, but not clear that
this path lies entirely within $ce(X)$.  The following lemma shows
that $ce(X)$ has an equivalent definition which clearly \emph{is}
contiguous:

\begin{lemma}
Let $F$ be an AC typed forest and $X$ be a proper node of $F$.
 Let $ce'(X)$ be the smallest set of nodes of $F$ containing $\str(X)$
 (the smallest substructure containing $X$) and closed under:
  \begin{itemize}
  \item[$(\ox')$] If $Y = t \ox s$ is a node of $F$, if $t\neq X$ and
    $s \neq X$ and if either $t \in ce'(X)$ or $s\in ce'(X)$, then $Z$
    is in $ce(X)$ for each $Z \in \str(Y)$;
\item[$(W')$] If $Y = (t \lor *)$ (resp. $(* \lor t)$) is a node of $F$,
    $t \neq X$, and either $t \in ce'(X)$, then $Y$ is
    in $ce'(X)$;
\item [($\Par'$ 1)]If $Y$ is a switched node of $F$, and all the
  successors of $Y$ are in $ce'(X)$ and not equal to $X$, then $Y \in
  ce'(X)$.
\item[($\Par'$ 2)]Let $Y$ be a switched node of $F$.  If none of the
  successors of $Y$ are equal to $X$, and if one of the successors of
  $Y$ is $ce'(X)$, then: if there is a switching path $P$ from $X$ to
  $Y$ which does not pass through any of the successors of $Y$, then
  $Z \in ce'(X)$ for each $Z \in \str(W)$, $W \in P$.
  \end{itemize}
\end{lemma}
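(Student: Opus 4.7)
My plan is to prove the stated equality $ce(X) = ce'(X)$ by establishing both inclusions; the contiguity of $ce'(X)$ with respect to $X$ will then follow directly by inspection of the inductive definition, since each new node added to $ce'(X)$ comes equipped with a switching-path trail back through earlier-added material to $X$.

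For $ce(X) \subseteq ce'(X)$, I would invoke the defining minimality of $ce(X)$ by verifying that $ce'(X)$ is itself a substructure containing $X$ and closed under the unprimed conditions $(\ox), (W), (\Par 1), (\Par 2)$. Each primed rule is strictly stronger than its unprimed counterpart---it additionally guarantees that certain $\str(\cdot)$ lie in $ce'(X)$---so closure under the primed rules immediately entails closure under the unprimed ones; and the substructure property is maintained because every rule either supplies a full $\str(\cdot)$ or adds a node whose only missing descendants have already been forced in.

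For $ce'(X) \subseteq ce(X)$ I would induct on the construction of $ce'(X)$. The base case $\str(X) \subseteq ce(X)$ is immediate: $X \in ce(X)$ and $ce(X)$ is a substructure, so it contains the smallest substructure containing $X$. The cases $(\ox'), (W')$ and $(\Par' 1)$ reduce to their unprimed counterparts applied to $ce(X)$, upgraded via substructure closure of $ce(X)$ to turn $Y \in ce(X)$ into $\str(Y) \subseteq ce(X)$.

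The main obstacle is the $(\Par' 2)$ case, in which the unprimed $(\Par 2)$ supplies only $Y \in ce(X)$ but we additionally require $\str(W) \subseteq ce(X)$ for every $W$ on the witnessing switching path $P$. I would treat this by an inner induction along $P$: letting $W$ be the earliest node of $P$ not in $ce(X)$, with immediate predecessor $W'$ on $P$ (so $W' \in ce(X)$), I would case-analyse on the type of switching-graph edge $W'{-}W$. Downward forest edges and axiom-link edges place $W \in ce(X)$ at once by the subforest and axiom-link closure of $ce(X)$. The delicate sub-case is an upward forest edge into a switched node $W$ whose chosen successor is $W'$: since every other edge at $W$ has been removed by the switching, the continuation of $P$ past $W$ must exit through $\pr(W)$, and I would use this routing constraint to exhibit a switching walk from $X$ to $W$ entering through $\pr(W)$, supplying the ``from below'' hypothesis of $(\Par 2)$ and forcing $W \in ce(X)$. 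Every sub-case contradicts the choice of $W$, completing the induction and hence the lemma.
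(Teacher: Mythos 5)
Your high-level decomposition agrees with the paper's: the inclusion $ce(X)\subseteq ce'(X)$ is immediate, the rules $(\ox')$, $(W')$, $(\Par'\,1)$ reduce to their unprimed counterparts plus substructure closure, and everything hinges on $(\Par'\,2)$. The gap is in your handling of that one case, which is the entire content of the lemma. You take the earliest node $W$ of $P$ not in $ce(X)$ and, in the sub-case where $W$ is a switched node entered through its chosen successor $W'$, you propose to force $W\in ce(X)$ by exhibiting a switching path from $X$ into $W$ from below and applying $(\Par\,2)$. But the ``routing constraint'' you invoke -- that $P$ must leave $W$ through $\pr(W)$ -- does not yield such a path: it only says where $P$ goes \emph{after} $W$, whereas a from-below path must reach $\pr(W)$ from $X$ by a route avoiding $W$ and its successors. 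In the paper's own illustration (the net based on Bellin's example in the Remark following the inductive definition of $ce(X)$), the intermediate switched node $(\bar z)\lor(\bar y)$ is entered from above on the witnessing path, and the only from-below access to it runs through the \emph{opposite} branch of the $\ox$ -- a detour whose existence depends on the endpoint of the path being linked back to $X$, not on how $P$ continues past $W$. Proving that such a detour always exists, and that the concatenation of a path inside the already-constructed part with the reversed suffix of $P$ is a genuine switching path missing the successors of $W$, is a substantial argument you have not supplied.

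The paper avoids needing any such path by running the induction from the other end: it inducts on the length of $P$, first adds the endpoint $Y$ by $(\Par\,2)$, then absorbs the final segment of $P$ (from the penultimate switched node entered from below up to $Y$) in \emph{reverse} order, so that every switched node entered from above is obtained by substructure closure from its predecessor, which has already been added; the induction hypothesis then handles the shorter prefix ending at that penultimate node. The essential point your forward ordering misses is that membership of such a $W$ in $ce(X)$ is certified through $\pr(W)$, which occurs \emph{later} on $P$ than $W$ does -- so ``earliest node of $P$ not in $ce(X)$'' is the wrong minimal counterexample. To repair your argument you would either have to prove the existence of the from-below path in general (nontrivial, and not needed), or reorganize the induction to peel segments off the $Y$-end of the path as the paper does.
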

\begin{proof}
  We must prove that $ce'(X)$ is not larger than $ce(X)$ (it clearly
  contains $ce(X)$).  The difficult case is to show that a structure
  extended by one application of ($\Par'$ 2) can also be extended by
  multiple steps of $(\ox)$, $(W)$,($\Par$ 1), ($\Par$ 2), and closing
  under substructure, as in the definition of $ce(X)$.  We prove this
  by induction on the length of a switching path in the application of
  ( $\Par'$ 2).  Suppose that a single step of ($\Par'$ 2) can be
  carried out by multiple steps of the definition of $ce(X)$ when the
  switching path is of length $<n$. Now suppose ($\Par'$ 2) is applied
  to a structure $G$ and a path $P$ of length $n+1$, ending at a
  switched node $Y$.  By ($\Par$ 2), we may add $Y$ to $G$.  Recall
  that $P$ must enter $Y$ from below.  Counting from $X$, let $W$ be
  the penultimate switched node in $P$ entered from below on $P$ --
  that is, the switching path $Q$ traced from $W$ to $Y$ enters all
  pars in between from a successor). Seen from the opposite direction,
  that means that by applying $(\ox)$ and $(W)$, and closing under
  substructure, we can add $\str(V)$ for every $V$ on the path $Q$
  between $Y$ and $W$.  In particular, at least one of the successors
  of $W$ is a member of $ce(X)$, since $Q$ must leave $W$ by one of
  those successors.  The path $P$ restricted to be from $X$ to $W$
  does not pass through any successor of $W$, and thus by the
  induction hypothesis, we may add the rest of the switching path to
  $ce(X)$.
\end{proof}

\begin{proposition}
$ce(X)$ is contiguous with respect to $X$.
\end{proposition}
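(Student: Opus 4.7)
The plan is to exploit the alternative characterization $ce(X) = ce'(X)$ established in the preceding lemma and prove contiguity by induction on the stage at which a node enters $ce'(X)$. Because the rule $(\Par'\ 2)$ explicitly carries a switching path as part of its premise, this construction is tailor-made for producing, for each $Y \in ce'(X)$, a specific switching path from $X$ to $Y$ whose nodes all lie in $ce'(X)$.

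For the base case, $Y \in \str(X)$: every member of $\str(X)$ is reached by iteratively adding descendants of $X$ and then axiom-link duals of wire variables already placed (default-attachment closure is automatic because substructures are closed downward). A descendant $Y$ of $X$ is reached by descending through the tree and switching, at each switched ancestor on the route, to the successor lying on the path; a dual wire variable $\bar{x}$ is reached by extending the established path to $x$ across its axiom link. For the inductive cases, $(\ox')$ and $(W')$ introduce an unswitched $Y$ whose prerequisite successor $t$ is already covered, so I extend the path to $t$ by the single edge $t \to Y$; $(\Par'\ 1)$ picks any successor $t$ of the new switched $Y$, uses the inductive path to $t$, and sets the switching at $Y$ to $t$; and $(\Par'\ 2)$ hands us the switching path $P$ directly, whose initial subpaths also witness contiguity for each intermediate $W \in P$.

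The main obstacle, present throughout, is the substructure closure accompanying every rule application: whenever a node $U$ enters $ce'(X)$, all of $\str(U)$ enters too, and switching paths are required to each. For descendants reached by extending the path to $U$ straight down through the tree, the extension is routine. The genuinely delicate case arises in $(\Par'\ 2)$ when an intermediate $W \in P$ is switched, the switching at $W$ is fixed by $P$ to some successor $s$, and $\str(W)$ also contains descendants sitting in the subtree of a different successor of $W$. For such a $Z$ one produces a path by re-routing: either using the fact that $P$ enters $Y$ from below, so that the switching at $Y$ itself is entirely free and can descend in whichever direction is required, or going via an axiom link from $Z$'s subtree to the portion of $ce'(X)$ already known to be reachable from $X$. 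The $AC$ property of the ambient forest is what guarantees that the re-routed walk is acyclic and therefore a genuine switching path, and verifying that these re-routings always exist inside $ce'(X)$ is where the real content of the argument lies.
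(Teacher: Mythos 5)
Your strategy---deduce contiguity of $ce(X)$ from the alternative construction $ce'(X)$ of the preceding lemma, by induction on the stage at which a node enters---is exactly the route the paper takes: its entire proof reads ``By the previous lemma; it is clear that each stage of construction of $ce'(X)$ yields a contiguous substructure.'' Your base case and the easy inductive cases ($(W')$, $(\Par'\,1)$, the on-path nodes of $(\Par'\,2)$, and the unswitched node added by $(\ox')$, which one may traverse from one successor to the other because both successor edges survive every switching) are a faithful and considerably more explicit rendering of what the paper leaves implicit.

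Where your writeup stops short of a proof is precisely where you say it does: the nodes swept in by the $\str$-closure that live in the subtree of a \emph{non-chosen} successor of a switched node. Your first repair (``the switching at $Y$ is entirely free since $P$ enters $Y$ from below'') handles the endpoint $Y$, and the same truncate-and-descend trick handles any intermediate switched $W\in P$ that $P$ enters from its predecessor; but it says nothing about an intermediate switched $W$ that $P$ enters \emph{from above} through a successor $s$ and leaves through its predecessor, even though $\str(W)$ still adds the subtrees of $W$'s other successors. For such a $W$ the obvious concatenation is illegal: a switching graph keeps only one successor edge at $W$, so you cannot ride $P$ into $W$ via $s$ and then exit into $s'$. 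Your second repair (enter the orphaned subtree through an axiom link from territory already reachable) is the right instinct, but you give no argument that such a link exists, nor that the resulting walk avoids vertex repetitions and switching conflicts with the prefix of $P$; acyclicity of $F$ does not hand you this for free, and the difficulty can recur when the prefix itself strays into the relevant subtree. So, as written, the proposal correctly isolates the hard sub-case but does not discharge it. In fairness, the paper's ``it is clear'' conceals exactly the same sub-case, so you have matched the published argument and honestly marked where it is thin; a genuinely complete proof still needs the re-routing lemma you postpone.
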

\begin{proof}
  By the previous lemma; it is clear that each stage of construction
  of $ce'(X)$ yields a contiguous substructure.
\end{proof}

\begin{proposition}
  Let $F$ be an $AC$-correct structure, and $X$ a proper node of $F$.
  $ce(X)$ is a subnet of $F$.
\end{proposition}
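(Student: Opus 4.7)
The plan is to prove the subnet property by contradiction. Suppose there is a switching $\sigma$ and a switching path $P = W_0, W_1, \dots, W_n$ in $\sigma(F)$ between two roots $W_0, W_n$ of $ce(X)$ such that some intermediate $W_i \notin ce(X)$. Take $j$ minimal with $W_j \notin ce(X)$, so that $W_{j-1} \in ce(X)$, and consider the ``exit edge'' $(W_{j-1}, W_j)$ of $P$. Since $ce(X)$ is a substructure, hence closed downward under successors, $W_j$ cannot be a successor of $W_{j-1}$ in $F$; therefore $W_j = \pr(W_{j-1})$.

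Now I would case-split on the shape of $W_j$. If $W_j = t \ox s$ and neither $t$ nor $s$ equals $X$, then $W_{j-1} \in \{t,s\}$ is a member of $ce(X)$ and $(\ox)$ forces $W_j \in ce(X)$, a contradiction; the default weakening case is handled identically by $(W)$. If $W_j$ is switched, then the edge $(W_{j-1}, W_j)$ occurs in $\sigma(F)$ only if $\sigma(W_j) = W_{j-1}$, since all other edges from successors of a switched node are deleted by $\sigma$. To apply $(\Par 2)$ I need a switching path from $X$ to $W_j$ avoiding the successors of $W_j$: such a path is obtained by concatenating the switching path from $X$ to $W_0$ inside $ce(X)$ (which exists by contiguity of $ce(X)$ with respect to $X$) with the initial segment $W_0, \dots, W_{j-1}, W_j$ of $P$. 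One must verify that no successor of $W_j$ occurs as an interior node of this concatenation; the contiguity segment is entirely inside $ce(X)$, while simplicity of $P$ (as a switching path) guarantees the suffix meets $W_j$ only at its endpoint and enters it ``from below'' via $W_{j-1}$. Hence $(\Par 2)$ applies and $W_j \in ce(X)$, contradicting our choice.

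The delicate residual case is when $W_{j-1} = X$, so that $W_j = \pr(X)$: the clauses $(\ox)$, $(W)$, $(\Par 1)$ and $(\Par 2)$ all explicitly exclude adding the immediate predecessor of $X$ to $ce(X)$. Here I would argue by a symmetric examination of the re-entry edge $(W_k, W_{k+1})$, where $k$ is maximal with $W_k \notin ce(X)$: running the same case analysis from the other end, either one of the above conditions is forced, yielding a contradiction, or the re-entry also happens at $X$ itself. In the latter scenario the path from $W_0 = X$ back to $X$ together with the axiom/substructure links in $ce(X)$ produces a closed walk that can be refined to a cycle in $\sigma(F)$, contradicting AC-correctness of $F$.

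The main obstacle is this asymmetric treatment of $X$ in the closure clauses defining $ce(X)$: the clauses are crafted precisely to keep $X$ a root, which blocks the direct closure argument for the node $\pr(X)$. Overcoming this requires combining contiguity with the AC criterion to rule out a path that leaves $ce(X)$ through $\pr(X)$ and returns, rather than relying on the closure conditions alone.
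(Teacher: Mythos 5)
Your main case contains a genuine error. When the offending path exits $ce(X)$ through a switched node $W_j=\pr(W_{j-1})$, you invoke clause $(\Par\ 2)$ using the path obtained by concatenating a contiguity path from $X$ to $W_0$ with the initial segment $W_0,\dots,W_{j-1},W_j$ of $P$. That path reaches $W_j$ through $W_{j-1}$, which is a \emph{successor} of $W_j$; in the paper's terminology it enters $W_j$ \emph{from above}. Clause $(\Par\ 2)$ requires a switching path from $X$ that avoids all successors of $W_j$ and enters it from below, through $\pr(W_j)$, so its hypothesis is simply not satisfied, and your parenthetical claim that the suffix ``enters it from below via $W_{j-1}$'' has the orientation reversed. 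The mistake is not cosmetic: the ``from below'' side condition is exactly what permits $ce(X)$ to have roots sitting under switched nodes that lie outside $ce(X)$ (as happens, for instance, for the witness roots of the shaded kingdom in the paper's example, which sit under an expansion node outside it). A path is perfectly entitled to leave $ce(X)$ through such a switched predecessor, and no contradiction can be extracted at the exit point alone.

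The contradiction materialises only where the path \emph{re-enters} $ce(X)$, which is where the paper applies $(\Par\ 2)$ --- and which is precisely the manoeuvre you reserve for the special case $W_{j-1}=X$. In the paper's argument, if the offending path runs outside $ce(X)$ from a root $Y$ to a root $Z$ with $Y,Z\neq X$, then its final edge is $Z'\!-\!Z$ with $Z'=\pr(Z)$ switched, so the path must enter $Z'$ from below (only one successor edge of $Z'$ survives the switching, and it is already used for the final step to $Z$); concatenating the contiguity path from $X$ to $Y$ with the segment of the offending path from $Y$ to $Z'$ then legitimately triggers $(\Par\ 2)$ and forces $Z'\in ce(X)$, contradicting that $Z$ is a root of $ce(X)$. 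The case where one endpoint is $X$ itself is handled by concatenating the outside path with an inside path to produce a switching cycle, much as in your final paragraph. So your proposal assembles the right ingredients, but it deploys $(\Par\ 2)$ at the exit point, where its hypothesis fails, rather than at the re-entry point, where it holds.
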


\begin{proof}
  Suppose not.  Then there are roots $Y$, $Z$ of $ce(X)$ such that
  there is a switching path from $Y$ to $Z$ outside of $ce(X)$.  There
  are two cases to consider
  \begin{itemize}
  \item $X$ is $Y$ ($X$ is $Z$ is symmetric).  Then there is a path
    from $X$ to $Z$ inside $ce(X)$, and another outside $ce(X)$.  By
    concatenating these two paths we obtain a cycle, which contradicts
    $AC$-correctness of $F$.

  \item Neither $X$ nor $Y$ is $Z$.  By construction of $ce(X)$, both
    $Y$ and $Z$ are the successors of switched nodes in $F$.  The
    switching path from $Y$ to $Z$ passes through both of those
    switched nodes. In particular, there is a switching path from $Y$
    to $Z'$, the predecessor of $Z$ (a switched node), which enters
    $Z'$ from below.  There is also a path from $X$ to $Y$ within
    $ce(X)$, by contiguousness.  Concatenating these paths, we obtain
    a switching path from $X$ to $Z'$, not via a successor of $Z'$;
    thus $Z'$ is in $ce(X)$, contradicting the fact that $Z$ is a root
    of $ce(X)$.
  \end{itemize}
\end{proof}

\begin{corollary}
Let $F$ be a $AC$-correct structure, and $X$ a proper node of $F$.
  The kingdom $k(X)$ of $X$ exists, and is contiguous
  with respect to $X$.
\end{corollary}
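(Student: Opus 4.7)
Existence follows immediately from the preceding two results. Since $ce(X)$ has just been shown to be a subnet of $F$ with $X$ as a root, the family of subnets of $F$ containing $X$ as a root is nonempty. Lemma~\ref{lem:inserseckingdoms} then allows us to intersect all such subnets and obtain a smallest one; this intersection is, by definition, $k(X)$.

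For contiguity, my plan is to combine two facts already in hand: first, $k(X) \subseteq ce(X)$, since $ce(X)$ is itself a subnet with $X$ as a root and $k(X)$ is minimal among such; second, $ce(X)$ is contiguous with respect to $X$. Hence for any $Y \in k(X)$ there is a switching path $P$ from $X$ to $Y$ lying in $ce(X)$, and it suffices to show that $P$ in fact lies in $k(X)$.

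I would argue this by contradiction. Suppose $P$ visits a node outside $k(X)$; since $P$ begins and ends in $k(X)$, it must cross the boundary of $k(X)$ at least twice. Because $k(X)$ is a substructure---downward closed in $F$, closed under axiom links, and closed under default attachment---the only boundary-crossing edge available to a switching path is a forest edge at a root of $k(X)$, traversed upward from that root to its parent outside $k(X)$. (Axiom edges cannot cross by axiom closure; downward forest edges cannot leave $k(X)$ by downward closure; default-attachment edges are internalised for the same reason.) So let $R_1$ be the first root of $k(X)$ at which $P$ exits and $R_2$ the first root at which $P$ subsequently returns. The subpath of $P$ from $R_1$ to $R_2$, inheriting the switching of $P$, is a switching path in $F$ between two roots of $k(X)$ that passes through nodes outside $k(X)$---directly contradicting the defining property of the subnet $k(X)$, namely that every switching path between two of its roots lies inside.

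The only genuinely delicate step is the boundary analysis: seeing that a switching path can leave a substructure only by climbing past one of its roots via a forest edge. Once that is granted the subnet property of $k(X)$ does the rest for free, because the returning subpath carries over the switching of $P$ wholesale and so requires no path-rewriting or switching-adjustment. Incidentally the same argument handles the degenerate case $R_1 = R_2$: there the subpath would be a nontrivial closed walk in a single switching graph, contradicting AC-correctness of $F$.
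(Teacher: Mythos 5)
Your proof is correct, and the existence half coincides with the paper's (both observe that $ce(X)$ witnesses a subnet with $X$ as a root and then invoke Lemma~\ref{lem:inserseckingdoms} to intersect down to a smallest one). For contiguity, however, you take a genuinely different route. The paper re-applies the preceding proposition \emph{inside} $k(X)$: viewing $k(X)$ as an $AC$ typed forest in its own right, it constructs the contiguous empire of $X$ there, obtains a contiguous subnet of $k(X)$ with $X$ as a root, and concludes by minimality of the kingdom that this subnet is all of $k(X)$. You instead work in $F$: from $k(X)\subseteq ce(X)$ and contiguity of $ce(X)$ you get a switching path from $X$ to any $Y\in k(X)$, and then show the path cannot escape $k(X)$ because a switching path can only cross the boundary of a substructure along the forest edge joining one of its roots to that root's predecessor (axiom edges and internal forest edges being ruled out by closure), so an escape would produce either a switching path between two roots of $k(X)$ lying outside $k(X)$ --- contradicting Definition~\ref{def:subnet} --- or, in the degenerate case, a switching cycle. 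The paper's argument is shorter, but it silently relies on the fact that a subnet of $k(X)$ with $X$ as a root is also a subnet of $F$ with $X$ as a root (transitivity of the subnet relation), and making \emph{that} precise requires exactly the boundary-crossing analysis you carry out explicitly. So your version is more laborious but also more self-contained; it fills in the step the paper's one-line appeal to minimality glosses over. The only cosmetic quibble is your use of ``downward closed'' and ``upward,'' which clash with the paper's orientation conventions for forests (predecessors point toward roots), but the substance of the boundary analysis is right.
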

\begin{proof}
  For existence, note that we have demonstrated the existence of a
  subnet $ce(X)$ with $X$ as a root: the kingdom exists by Lemma~\ref{lem:inserseckingdoms}.
  Now consider the subnet $k(X)$ as a net in its own right, with $X$ as a
  root.  By the previous proposition, there is a subnet $ce(X)$ of $k(X)$
  with $X$ as a root; by minimality of the kingdom $ce(X)=k(X)$ and so
  $k(X)$ is contiguous with respect to $X$. 
\end{proof}

We will not need the fact that $ce(X)$ is the contiguous empire of $X$
but we include the proof of that fact here for the sake of completeness.
\begin{proposition}
  $ce(X)$ is the contiguous empire of $X$.
\end{proposition}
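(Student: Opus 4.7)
The plan is to identify $ce(X)$ with the contiguous empire of $X$ by showing it is the largest subnet of $F$ containing $X$ as a root and contiguous with respect to $X$. The two preceding propositions already establish $ce(X)$ as such a subnet, so the work is to prove maximality: given any subnet $G$ of $F$ with $X$ as a root and contiguous with respect to $X$, to show $G \subseteq ce(X)$.

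I plan to argue by minimal counterexample. Suppose $G \not\subseteq ce(X)$, and choose $Y \in G \setminus ce(X)$ minimising the shortest switching-path distance $d(Y)$ from $X$ in $G$ (which is finite by contiguousness); let $P = X_0, X_1, \ldots, X_n = Y$ realise this minimum. Minimality forces $X_i \in ce(X)$ for each $i < n$. I then case-split on the last edge of $P$, mirroring the structure of the proof that $ce(X)$ is a subnet: when $X_{n-1}$ is the forest-parent of $Y$ or its axiom-dual, substructure closure of $ce(X)$ gives $Y \in ce(X)$ immediately; when $X_{n-1}$ is a forest-successor of $Y$ and $Y$ is a tensor or a default weakening, rules $(\ox)$ and $(W)$ apply directly, using that $X$ being a root of $G$ cannot be a forest-successor of $Y$.

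The main obstacle is the case where $Y$ is a switched node and $P$ enters $Y$ through a forest-successor. I would further distinguish whether $Y$ is a root of $G$. If $Y$ is not a root of $G$, then $\pr(Y) \in G$, and by suitable adjustment of the switching at $\pr(Y)$ a contiguousness-supplied path to $\pr(Y)$ extends by the forest edge $\pr(Y)$--$Y$ into a switching path entering $Y$ from below, whereupon rule $(\Par 2)$ places $Y$ in $ce(X)$. If $Y$ is a root of $G$, the argument is subtler: every switching path into $Y$ must enter via a forest-successor, and rule $(\Par 1)$ must be invoked, which requires that each forest-successor $S$ of $Y$ lies in $ce(X)$. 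The key observation is that with $Y$ a switched root of $G$ no switching path in $G$ from $X$ to $S$ can traverse $Y$ itself (entering $Y$ through any successor leaves the imposed switching with no way to exit to another, and $\pr(Y) \notin G$); combined with a refinement of the minimal-counterexample selection that prioritises eliminating the successors of $Y$ before $Y$ itself via contiguousness applied to each $S$, this yields $S \in ce(X)$ and completes the contradiction.
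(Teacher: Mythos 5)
Your reduction of the problem to maximality is right, and your cases where the minimal path $P$ reaches $Y$ through its predecessor, through an axiom link, or through a successor of an unswitched node are all sound (including the observation that $X$, being a root of $G$, cannot be a successor of a node of $G$). The gap is exactly where you flag ``the main obstacle'', and it is not closed by what you propose. In sub-case 3a, note first that since $ce(X)$ is a substructure (hence successor-closed), $Y \notin ce(X)$ forces $\pr(Y) \in G \setminus ce(X)$, and $d(\pr(Y)) \geq d(Y)$, so minimality gives you nothing about $\pr(Y)$. Your plan is to take a contiguousness path $Q$ from $X$ to $\pr(Y)$ and append the forest edge $\pr(Y)$--$Y$ after ``suitable adjustment of the switching at $\pr(Y)$''. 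But $\pr(Y)$ may itself be a switched $\lor$-node, and $Q$ may enter it through a successor other than $Y$; re-switching $\pr(Y)$ to select $Y$ then deletes the last edge of $Q$, and you are left with no path entering $Y$ from below. (You also never address the side condition of $(\Par\ 2)$ that the path avoid \emph{all} successors of $Y$, which $Q$ need not do.) In sub-case 3b, ``a refinement of the minimal-counterexample selection that prioritises eliminating the successors of $Y$'' is not an argument: the successors of $Y$ other than $X_{n-1}$ may be strictly farther from $X$ than $Y$ is, and establishing that they lie in $ce(X)$ recreates the same problem one level down — worse, since you correctly observe that no switching path in $G$ can traverse the switched root $Y$, rule $(\Par\ 2)$ can never be invoked for a node below $Y$ via a path through $Y$, so there is no evident way to seed that recursion.

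The paper resolves precisely this difficulty by abandoning the local, node-by-node argument in favour of a global counting argument. If a contiguous subnet $G$ contains a node outside $ce(X)$, any path from $X$ to it must exit $ce(X)$ at a root $Y_0$ of $ce(X)$ lying under a switched node $W_0 \notin ce(X)$; contiguousness forces a path from $X$ to $W_0$, which (since $(\Par\ 2)$ would otherwise apply) must arrive through another successor $Z_0 \notin ce(X)$ of $W_0$; reaching $Z_0$ forces a further exit from $ce(X)$ at a root $Y_1 \neq Y_0$ (distinct by acyclicity), and iterating produces an infinite sequence of pairwise distinct roots of $ce(X)$, contradicting finiteness. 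Some version of this boundary-crossing count is what your proof is missing; without it, the assertion that $(\Par\ 1)$ or $(\Par\ 2)$ applies to your minimal counterexample is unsupported.
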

\begin{proof}
  Suppose for a contradiction that $G$, some contiguous subnet of $F$,
  contains a node $W_0$ not contained in $ce(X)$.  We may assume that
  this $W_0$ is a switched node $W_0$, which has a successor $Y_0$
  which is a root of $ce(X)$, and a successor $Z_0$ not in $ce(X)$;
  the path from $X$ to any node outside of $ce(X)$ must leave $ce(X)$
  through such a node.  Since $G$ is contiguous, there is a switching
  path from $X$ to $W_0$: since $W_0$ is not a member of $ce(X)$, that
  path must come via $Z_0$, and so $Z_0$ is also a member of $G$.
  Applying the same logic as before, the path from $X$ to $Z_0$ in $G$
  must leave $ce(X)$ at some root $Y_1$ (distinct from $Y_0$ by
  acyclicity).  This root is also the successor of a switched node
  $W_1$, and $W_1$ must also have a successor $Z_1$ not in $ce(X)$.
  Note that we now have $Y_0$, $Y_1$, distinct roots of $ce(X)$,
  successors of switched nodes $W_0, W_1$; those switched nodes each
  have another successor $Z_0$, $Z_1$ not in $ce(X)$. There is a
  switching path from $W_1$ to $W_0$ via $Z_0$, leaving $W_1$ through
  its predecessor.

  Now suppose that, repeating this line of thinking, we have found
  roots $Y_0 \dots Y_n$ of $ce(X)$, successors of switched nodes $W_0,
  \dots W_n$, such that each $W_i$ has another successor $Z_i$ not in
  $ce(X)$, and that there is a switching path from $X$ to each $Y_i$,
  for $i<n$, leaving $ce(X)$ at $Y_{i+1}$.  Suppose, further, that
  there is a switching path from $Z_n$ to $W_1$ which, tracing from
  $W_n$ to $W_1$, enters each $W_i$ via $Z_i$.  Since $W_n$ is in $G$,
  there is a switching path from $X$ to $Z_n$, leaving $ce(X)$ at
  $Y_{n+1}$, which has a predecessor $W_{n+1}$.  There thus a
  switching path from $W_{n+1}$ to $Z_n$, leaving $W_{n+1}$ through
  its predecessor.  By concatenating with the path from $Z_n$ to
  $W_0$, we obtain a switching path from $W_{n+1}$ to $W_1$, and
  consequently to each $W_i$ (to see that this concatenation is really
  a switching path, observe that if there is a switched node common to
  both paths, either there is a switching cycle or a switching path
  from $W_0$ to $W_{n+1}$, contradicting that $W_0$ and $W_{n+1}$ are
  not in $ce(X)$).

  Suppose $Y_{n+1} = Y_j$ for $j\leq n$; then there is a switching
  path from $W_{n+1}$ to itself: a switching cycle.  Thus $Y_{n+1}$ is
  a new root of $ce(X)$.  Since $ce(X)$ has only finitely many nodes,
  eventually $Y_{n+1}$ will be equal to $Y_i$ for some $i$, and we
  obtain a contradiction.
\end{proof}

\subsection{Splitting and sequentialization for expansion nets}
\label{sec:sequ-expans-nets}
Sequentialization for expansion nets is the following:
\begin{theorem}
\label{thm:seq}
  Let $F$ be an e-annotated sequent.  $F$ is an expansion-net
  (i.e. $F$ is derivable in $\LKE$) if and only if $F$ is
  $AC$-correct.
\end{theorem}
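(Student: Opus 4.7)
The plan is to prove the two directions separately. The ``only if'' direction is a routine induction on the $\LKE$ derivation: at each rule, the switching graphs of the conclusion are either subgraphs of, disjoint unions of, or extensions by a single degree-one vertex of the switching graphs of the premises, so no new cycle can arise. The axioms are immediate; $\mix$ amounts to disjoint union; the disjunction, weak-disjunction, and conjunction rules each introduce a single new root whose only fresh edges are to vertices already in the premise; and the contraction rules merge two expansions of the same type into one, which only reduces the number of switching choices at that node and hence cannot create cycles.

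For the ``if'' direction (sequentialization) I would proceed by strong induction on the number of proper nodes of $F$, case-analysing the root structure of $F$ to locate a last rule. If some root is $1:\top$, strip it with $\Ax_\top$ and $\mix$. If some root is a disjunction $t \lor s$, $(t \lor *)$, or $(* \lor t)$, apply the matching $\lor$, $\lor_0$, or $\lor_1$ rule in reverse; the premise remains $AC$-correct because replacing such a root by its successor(s) only removes a vertex together with some edges from each switching graph. If some root is a nontrivial expansion $(w_1 + \cdots + w_n)$ with $n \geq 2$, split off the last witness to obtain new roots $(w_1 + \cdots + w_{n-1})$ and $(w_n)$ and apply the appropriate contraction in reverse; this change adds only the new degree-one vertex $(w_n)$ to each switching graph, which cannot lie on any cycle. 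Otherwise every root is a trivial atomic or conjunctive expansion: if a dual atomic pair $(x):p, (\bar{x}):\bar{p}$ appears at the roots, strip it via $\Ax$ and $\mix$; otherwise, since every wire variable must be paired, at least one root must be a conjunctive expansion $(w \otimes v):A \land B$, and I invoke a splitting tensor lemma to extract a final $\land$ rule, producing two strictly smaller $AC$-correct premises to which the induction hypothesis applies.

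The main obstacle is the splitting tensor lemma in the presence of $\mix$: I must exhibit a conjunctive root $(w \otimes v):A \land B$ which is splitting, in the sense that $ce(w)$ and $ce(v)$ are disjoint subnets whose union, together with the tensor and its wrapping expansion, covers $F$. In $\MLLminus$ without $\mix$ one argues by selecting an empire-maximal tensor and exploiting the simultaneous empire property, but that property fails here, as Example~\ref{ex:badempires} illustrates. Instead, I would pick a conjunctive root whose tensor $w \otimes v$ is $\ll$-minimal among conjunctive roots, using the kingdom ordering of Proposition~\ref{prop:kingdomordering}; the contiguousness of $ce(w)$ and $ce(v)$ developed in Section~\ref{sec:contiguous-subnets}, combined with the exhaustion of all other structural moves in the preceding cases, should force any switching path from $ce(w)$ to $ce(v)$ to traverse the tensor itself, giving the required clean partition. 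This mirrors Bellin's treatment of $\MLLminus + \mix$~\cite{Bel97SubMix}, with extra care needed to handle the default weakenings and the trivial wrapping expansions idiosyncratic to expansion-nets.
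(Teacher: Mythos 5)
Your skeleton is the paper's: the forward direction is the routine rule-by-rule check (which the paper does not even write out), and the backward direction is a size induction that peels off $\top$, disjunction, default-weakening and non-trivial-expansion roots as gates, reducing everything to a splitting-tensor lemma (the paper's Lemma~\ref{lem:splitting}). The gate cases and the axiom case are fine. The trouble is concentrated in your treatment of the splitting lemma, which is the only genuinely hard step, and there it contains both an error and a gap.

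First, you have the extremum backwards. In the kingdom ordering of Proposition~\ref{prop:kingdomordering}, $X \ll Y$ means $X \in k(Y)$, so a root that can be introduced by the \emph{last} rule must be $\ll$-\emph{maximal}: if a tensor $T$ lies in the kingdom of another root $T'$, every sequentialization introduces $T$ strictly inside the subproof introducing $T'$, so $T$ cannot split. A conjunctive root that is $\ll$-minimal merely has a kingdom containing no other conjunctive root, which is perfectly compatible with its sitting inside someone else's kingdom. Concretely, in the $AC$-correct e-annotated sequent $((x)\ox(y)) : a\land b,\ (((\bar x)\lor(\bar y))\ox(z)) : (\bar a\lor\bar b)\land c,\ (\bar z):\bar c$, the tensor $(x)\ox(y)$ is $\ll$-minimal among the conjunctive roots but is not splitting: its two sides are joined by the path through $\bar x$, the $\lor$ node and $\bar y$, and indeed no partition of the context puts $x$ and $\bar x$ on opposite sides. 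The second tensor is the $\ll$-maximal one, and it is the one that splits.

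Second, even after correcting minimal to maximal, a single well-chosen tensor need not be splitting once $\mix$ is admitted --- this is precisely where the simultaneous-empire property fails --- and your final sentence (``should force any switching path from $ce(w)$ to $ce(v)$ to traverse the tensor itself'') asserts the conclusion instead of proving it. The paper's proof, like Bellin's, must \emph{iterate}: if a $\ll$-maximal tensor fails to split, the offending path leaves one of its contiguous empires through a root whose predecessor lies under a further $\ll$-maximal root; contiguousness of kingdoms is then used to concatenate switching paths linking the successive candidates, and acyclicity together with the finiteness of the set of roots forces some candidate in the resulting sequence to be splitting. Without this iteration, or an equivalent replacement for it, the splitting lemma --- and hence the ``if'' direction of the theorem --- is not established.
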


The proof of sequentialization for expansion nets is not so different
from sequentialization for \MLLminus plus \mix\ nets.  The
proof of sequentialization  we give in this paper is \emph{bottom-up},
and can be thought of as proof search in $\LKvar$, guided by the
information contained in an e-annotated sequent.
Given an $AC$-correct e-annotated sequent, we look for a
rule of $\LKE$ with $F$ as the conclusion and $AC$-correct e-annotated
sequents as premisses. We call such a root of $F$ a \emph{gate}.
\begin{definition}
  Let $F$ be an $AC$-correct e-annotated sequent.  A \emph{gate} of $F$ is a root $t:A$
  of $F$ which is the conclusion of a rule instance $\rho$ of $\LKE$, such
  that the premisses of $\rho$ are also $AC$-correct e-annotated sequents.
\end{definition}

As we will see below, disjunctions and non-trivial expansions are
always gates.  The major difficulty in proving sequentialization lies
in showing the existence of a gate in the absence of disjunctive and non-trivial
expansions.  In this case, the gate will be a ``splitting'' instance
of  $\ox$.

The proof of the existence of a splitting $\ox$ we give here is
slightly novel, in that we make use of the notion of contiguousness
(the previous proof of this lemma for \mix-nets, by
Bellin~\cite{Bel97SubMix}, is almost the same but less elementary).

\begin{lemma}
  Let $F$ be an $AC$-correct e-annotated sequent, and let the roots of
  $F$ be \emph{trivial} expansions (of the form $(x)$, $(\bar{x})$ or
  $(t \ox s$)).  If at least one root of $F$ is non-atomic, then $F$ has the
  form
  \[ F_1, F_2, ( t \ox s):A \land B,\]
  \noindent where $F_1, t:A$ and $F_2,s: B$ are $AC$-correct e-annotated
  sequents.
\label{lem:splitting}
\end{lemma}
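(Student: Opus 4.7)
The plan is to identify, among the $\ox$-witness roots of $F$, a ``splitting'' root $(t_0 \ox s_0)$ whose contiguous empires $ce(t_0)$ and $ce(s_0)$ provide the desired decomposition. Such a root exists by hypothesis: the roots of $F$ are trivial expansions of the forms $(x)$, $(\bar x)$, or $(t \ox s)$, and at least one is non-atomic. The first key observation is that for every $\ox$-witness root $(t \ox s)$, the empires $ce(t)$ and $ce(s)$ are automatically disjoint. The intermediate $\ox$-witness node is unswitched, so every switching graph contains the two-edge path $t - (t\ox s) - s$; a common node $Y \in ce(t)\cap ce(s)$ would, by contiguity of each empire, yield a switching path from $t$ through $Y$ to $s$ avoiding $(t\ox s)$, which concatenated with the direct $\ox$-path forms a switching cycle, contradicting $AC$-correctness.

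The main work is to choose a $\ox$-witness root $(t_0\ox s_0)$ for which $ce(t_0)\cup ce(s_0)$ exhausts all of $F$ except $(t_0\ox s_0)$ and its $\ox$-witness node. I plan to select $(t_0\ox s_0)$ to be $\ll$-maximal among the $\ox$-witness roots of $F$; such a root exists by Proposition~\ref{prop:kingdomordering}, which tells us that $\ll$ is a partial order on non-atomic proper nodes. If some root $r$ of $F$ lay outside $ce(t_0)\cup ce(s_0)\cup\{(t_0\ox s_0)\}$, then $r$ must be either an atomic root or another $\ox$-witness root. In the atomic case I would use closure of substructures under axiom links, together with contiguity, to derive a contradiction. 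In the $\ox$ case I would show, by tracing switching paths through the empires of $r$'s children, that the kingdom of some other $\ox$-witness root strictly contains $(t_0\ox s_0)$, violating maximality. This is the main obstacle, and it is essentially the classical ``splitting tensor'' argument for \MLL$+$\mix nets~\cite{Bel97SubMix} adapted to our setting; the notion of contiguous empire from Section~\ref{sec:contiguous-subnets} is what makes the adaptation go through cleanly in the presence of $\mix$.

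Having secured a splitting root $(t_0 \ox s_0)$, I set $F_1 := ce(t_0)\setminus\{t_0\}$ and $F_2 := ce(s_0)\setminus\{s_0\}$. Because $ce(t_0)$ and $ce(s_0)$ are subnets of $F$, both $F_1, t_0 : A$ and $F_2, s_0 : B$ inherit $AC$-correctness (any switching cycle would lift to $F$), and each is closed under axiom links since the wire variables of $F$ are partitioned between the two disjoint empires. The disjointness and exhaustiveness established above give $F = F_1,\ F_2,\ (t_0\ox s_0)$ as multisets of roots. Finally, $t_0$ and $s_0$ are themselves expansion trees (being the components of a $\ox$-witness), and the remaining roots of $F_1$ and $F_2$ are expansion roots inherited from $F$, so both sides are genuine e-annotated sequents, completing the decomposition asserted in the lemma.
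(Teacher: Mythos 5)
Your overall strategy (pick a $\ll$-maximal tensor root via Proposition~\ref{prop:kingdomordering} and use contiguous empires to produce the split) follows the same skeleton as the paper's proof, and your preliminary observation that $ce(t)$ and $ce(s)$ are disjoint for a tensor root $(t \ox s)$ is correct. But the heart of the lemma is precisely the step you defer with ``I would show'', and the intermediate claim you aim for is both stronger than what is needed and false as stated. First, the exhaustiveness claim --- that for the $\ll$-maximal root, $ce(t_0)\cup ce(s_0)$ covers all of $F$ outside the root itself --- fails whenever the graph of $F$ is disconnected; the lemma does not assume connectedness, and a \mix-derivable net such as $(x):a,\ (\bar{x}):\bar{a},\ ((y)\ox(z)):b\land c,\ (\bar{y}):\bar{b},\ (\bar{z}):\bar{c}$ satisfies its hypotheses. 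A contiguous empire contains only nodes reachable from its root by a switching path, so the component $(x),(\bar{x})$ lies in neither $ce((y))$ nor $ce((z))$, and your definitions $F_1:=ce(t_0)\setminus\{t_0\}$ and $F_2:=ce(s_0)\setminus\{s_0\}$ then do not partition $F$ (nor are the wire variables of $F$ partitioned between the two empires, so your axiom-closure argument also breaks). This defect is repairable by parking leftover components on either side, but it shows the decomposition cannot literally be the pair of contiguous empires.

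Second, and more seriously, the claim that a single $\ll$-maximal tensor root already splits is nowhere established, and the paper's own proof does not assert it. Since $\ll$ is only a partial order there may be several incomparable $\ll$-maximal tensor roots, and the paper explicitly allows the first chosen one to fail to split; it then uses that failure to locate a second $\ll$-maximal root joined to the first by a switching path, and iterates, building a chain of pairwise distinct $\ll$-maximal roots (distinctness coming from acyclicity) until finiteness of the set of roots forces one of them to split. Your plan --- to show that non-splitting of $(t_0 \ox s_0)$ places it strictly inside the kingdom of some other tensor root, contradicting maximality in one step --- would, if correct, make the paper's chain construction unnecessary; but you give no argument for it, and the natural attempt (follow the path that leaves $ce(t_0)$ down to the root $R$ of $F$ below its exit point and claim $(t_0\ox s_0)\in k(R)$) does not go through, since the exit point may be an atom whose kingdom contributes only an axiom pair to $k(R)$, which need not contain $(t_0\ox s_0)$. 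So the combinatorial core of the lemma --- the existence, rather than the verification, of a splitting tensor --- is missing from your proposal.
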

\begin{proof}
Since every root of $F$ is a trivial expansion, we have that
\begin{eqnarray*}
F = (x_1):a_1, \dots (x_n):a_l, (\bar{y}_1):b_1, \dots, (\bar{y}_m):b_m,\\
(t_0 \ox t'_0):A_1\land B_1, \dots (t_n \ox t'_n):A_n\land B_n. 
\end{eqnarray*}
Let $G$ be the typed forest consisting on the witnesses contained in
the roots of $F$: that is,
\begin{eqnarray*}
G = x_1:[a_1], \dots x_n:[a_l], \bar{y}_1:[b_1], \dots, \bar{y}_m:[b_m],\\
t_0 \ox t'_0:A_1\ox B_1, \dots t_n \ox t'_n:A_n\ox B_n. 
\end{eqnarray*}
We will show that there is a root $t\ox s$ of $G$ such that
\[ G = G_1, G_2,  t \ox s:A \ox B,\]
\noindent and such that every path from $G_1$ to $G_2$ in the graph of
$G$ passes through the node $t\ox s$.  In that case, $G_1, t:A$ and
$G_2, s:B$ are $AC$-correct typed forests, and $(t\ox
s):A\land B$ is clearly a gate of $F$.
 
By Proposition~\ref{prop:kingdomordering} $G$ has a $\ll$-maximal root
$X$; this node must be a tensor, without loss of generality $t_0 \ox
t'_0$.  If $X$ is splitting, we are done.  Suppose it is not
splitting; then we know that
  \begin{itemize}
  \item $ce(X)$ is not the whole of $F$, and in addition
  \item there is a (non-switching) path in the graph of $G$ from a root of
    $ce(t_0)$ to a root of $ce(t'_0)$ (if no such path exists, then
    $t_0 \ox t'_0$ is splitting).
  \end{itemize}
  The existence of the path means that there is a root $s$ of
  $ce(t_0)$ whose predecessor is not in $ce(t_0)$, through which this
  path leaves $ce(t_0)$.  This now allows us to discern the existence
  of another $\ll$-maximal node $Y$; if the root $Z$ below $s$ is
  $\ll$-maximal, then we are done; if $Z$ is not $\ll$-maximal, then
  there is an $\ll$-maximal node $Y$ such that $Z \ll Y$.  If $Y$ is
  splitting we are done.

  Now suppose that $t_0\ox t'_0 \dots t_n \ox t'_n$ are all
  $\ll$-maximal non-splitting roots of $G$, and that there is a
  switching path from $t_0$ to $t'_n$ passing through each $t_i$ and
  $t'_i$ in turn.  As above, since $t_n \ox t'_n$ is not splitting
  there is a root $s$ of $ce(t'_i)$ whose predecessor is not in
  $ce(t'_i)$.  This now allows us to discern the existence of another
  $\ll$-maximal node.  If the node is splitting we are done; otherwise
  it is a tensor $t_{n+1} \ox t'_{n+1}$.  Note that it must be
  distinct from the roots of $F$ already listed, otherwise $F$ would
  have a switching cycle: since $k(t_{n+1} \ox t'_{n+1})$ is
  contiguous with respect to $t_{n+1} \ox t'_{n+1}$, there is a
  switching path from, without loss of generality, $t_{n+1}$ to $s$.
  If that switching path enters $s$ from below, then we have a
  switching path from $t_0$ to $Y'_{n+1}$, by concatenation.
  Otherwise, the switching path from $t_{n+1}$ enters $s$ from above,
  and must pass into $ce(Y'_i)$ via some other root $r$; either way,
  we have a switching path from $t_0$ to $t'_{n+1}$.

  We have seen that, given $n$ non-splitting $\ll$-maximal roots of
  $G$, we can find another such root.  Since $G$ has only finitely
  many roots, we eventually find one, $t' \ox s'$, which is
  splitting. So $G = G_1, G_2, t' \ox s'$, where $G_1, t'$, $G_2, s'$
  are both $AC$-correct typed forests. \qed
\end{proof}

\begin{proof}[Of Theorem~\ref{thm:seq}]
By induction on the number of symbols in $F$.  The smallest possible
numbers of symbols in an $AC$-correct annotated sequent is one ($F =
1:\top$), which can easily seen to sequentialize.

Suppose now that $F$ contains more than one symbol.  First, suppose
that $F$ has a graph which is disconnected: let $F'$ be a connected
component of $F$.  Then $F=F', F''$, and we have
\[
\begin{prooftree}
  F' \qquad F'' \justifies F \using \mix
\end{prooftree}
\]
Both $F'$ and $F''$ have
fewer symbols than $F$; by the induction hypothesis there are $\LKE$
derivations of $F'$ and $F''$, and so also an $\LKE$ derivation of
$F$. 

Suppose first that $F$ is an $AC$-correct e-annotated sequent whose graph is
disconnected.  Then each connected component of the graph $F$ defines
an $AC$

We now show that every $AC$-correct e-annotated sequent $F$ whose
graph is connected either has a gate or is of the form $(x):a,
(\bar{x}):\bar{a}$ (i.e. a conclusion of the $\LKE$ axiom) by
induction on the number of nodes in $F$. 

  If $F = F', (t_1 + \cdots + t_n):A$, with $n>2$ then $(t_1 + \cdots
  + t_n):A$ is a gate: for example,$F', (t_1):A, (t_2+ \cdots +
  t_n):A$ is also $AC$-correct, and we have
  \[\begin{prooftree}
    F', (t_1):A, (t_2+ \cdots +
  t_n):A
\justifies
 F', (t_1 + \cdots + t_n):A
\using C
  \end{prooftree}\]
  Similarly, if $F = F', (t\lor s):A\lor B$,
  then $t\lor s$ is a gate: $F', t:A , s:B$ is $AC$-correct and 
 \[\begin{prooftree}
    F', t:A , s:B
\justifies
 F', (t\lor s):A\lor B
\using \lor
  \end{prooftree}
\]
A similar argument shows that roots of the form $(t \lor *)$ and $(*
\lor t)$ are gates.

If $F$ is connected, and no root of $F$ is a disjunction or nontrivial
expansion, then either all the roots of $F$ are of atomic type, or $F$
contains at least one root of conjunctive type.  We may, therefore,
apply Lemma~\ref{lem:splitting} to obtain $AC$-correct e-annotated
sequents $F_1, t':A$ and $F_2, s':B$ such that 
\[
\begin{prooftree}
   F_1, t' \qquad F_2, s'
\justifies
F
\using \land
\end{prooftree}
\]
\noindent is a correct application of the $\land$-rule.  Since the
premisses of this rule are smaller $AC$-correct e-annotated sequent,
they can be derived in $\LKE$, and therefore so can $F$.

Finally, suppose that all the roots of $F$ are trivial expansion of
atomic type, and that $F$ is connected.  Then $F$ must contain at
least one pair $(x):a, (\bar{x}):\bar{a}$, since wire variables occur
in pairs.  It cannot contain any more pairs, since otherwise it would
not be connected, so $F = (x):a, (\bar{x}):\bar{a}$, and is derivable
in $\LKE$.

\end{proof}

\section{Cut-elimination for expansion nets}
\label{sec:cut-elim-expans}
In this final technical section we define a weakly normalizing
cut-elimination procedure directly on expansion-nets which preserves
correctness.  This result did not appear in~\cite{McK10exnets}, and is
new to the current paper. In Propositions
\ref{def:logcutandor},\ref{def:logcutatomic},\ref{def:structcutcontr}
and \ref{def:structcutdefweak}, we show that any individual cut in an
expansion-net can be replaced by ``smaller'' cuts.  Then, in
Lemma~\ref{lem:prinlem}, we show that one cut can be removed
completely from an otherwise cut-free expansion-net.  Finally, in
Theorem~\ref{thm:cut-elim} we show, using the kingdom ordering defined
in the previous section, how to eliminate all cuts from an
expansion-net.

The primary difficulty in defining cut-elimination for classical nets
lies with the reductions involving weakening and contraction.  In the
original linear logic proof nets, deletion and duplication of
subproofs is mediated by boxes.  As we suggested above, in box-free
settings the role of boxes is taken on by \emph{subnets}.  This
means that cut-reduction in classical nets is not \emph{local}: the
subnet to be copied must be calculated, and this calculation can, in
general, consider the whole net.  


Cut-elimination for expansion nets is, of course, strongly
related to cut-elimination for the calculus $\LKvar$. This
calculus is cut-free complete, and so we already have a (semantic)
cut-elimination result, but since this calculus is only complete for
formulae, and not for sequents, the result has a somewhat nonstandard
form:

\begin{proposition}
  Let $\G$ be provable in $\LKvar$ plus $\cut$.  Then there is a 
  sub-multiset $\G'$ of $\G$ provable in $\LKvar$.
\end{proposition}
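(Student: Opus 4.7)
The plan is to argue semantically, rather than attempt a syntactic cut-elimination in $\LKvar$ directly (which would be awkward, since $\LKvar$ lacks weakening and has both additive and multiplicative $\lor$-rules, so the usual key cases -- e.g.\ pushing a cut past a contraction -- interact badly with the available rule set). The heavy lifting has already been done in the completeness proposition of Section~\ref{sec:vari-sequ-calc}, so the present statement should follow by assembling existing pieces.

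First, I would verify soundness of $\LKvar + \cut$ for classical validity of $\bigvee \G$. Every rule of $\LKvar$ is a restriction of a rule of $\LK$, and $\cut$ is of course sound, so if $\LKvar + \cut \turnstile \G$ then $\G$ is classically valid. Second, I would appeal to the completeness of the cut-free system $\LK$ of Figure~\ref{fig:LK}: any valid sequent $\G$ has a cut-free $\LK$-derivation. (Equivalently, one may invoke Gentzen's cut-elimination for $\LK$ applied to the $\LK + \cut$ derivation obtained by reading the given $\LKvar + \cut$ derivation inside $\LK$.)

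Finally, I would apply the proposition proved earlier, which states that whenever $\LK \turnstile \G$ (in the system of Figure~\ref{fig:LK}), the roots of $\G$ partition into \emph{strong} formulae $\G_s$ and \emph{weak} formulae $\G_w$ with $\LKvar \turnstile \G_s$. Setting $\G' := \G_s$ yields the required sub-multiset: $\G' \subseteq \G$ and $\LKvar \turnstile \G'$.

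There is no substantial obstacle here: the present proposition is essentially a repackaging of the earlier completeness-modulo-subsequents result, combined with the classical fact that cut-free $\LK$ is complete. The only mild subtlety is to remember that ``subsequent'' in the statement is interpreted in the forest/multiset sense explained in the remark of Section~\ref{sec:forests-sequents}, which is exactly the sense in which the earlier proposition produces $\G_s$, so the two usages match without extra bookkeeping.
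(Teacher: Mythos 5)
Your proposal is correct and matches the paper's intent: the paper offers no separate proof of this proposition, justifying it in the preceding sentence precisely as a ``(semantic) cut-elimination result'' following from the cut-free completeness of $\LKvar$ modulo subsequents. Your assembly of soundness of $\LKvar+\cut$, completeness of cut-free $\LK$, and the earlier strong/weak partition proposition is exactly that argument made explicit.
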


It is interesting to consider how one might prove this theorem
syntactically, within $\LKvar$.  A typical cut-reduction step in the
sequent calculus is to identify a sub-proof ending with a cut, and
replace it with a sub-proof in which no cuts appear.  Applying that
methodology to $\LKvar$, we take a subproof proving $\G$ and acquire a
subproof proving a subsequent $\G'$.  If we had access to weakening,
this would be unproblematic, but in our setting we can only ``weaken''
within a disjunction.  Thus any formula which ``becomes weak'' during
cut-elimination must be a conclusion of the whole derivation or an
immediate subformula of a disjunction: this is an unusual requirement
for a cut-reduction step; it adds another place in which commutative
conversions must be applied and it is not immediately clear that it
can lead to cut-elimination.  Fortunately, in a proof-net setting
commutative conversions are not needed, and it is easy to see that
such reductions can always be applied.

To begin, we need to introduce a notion of expansion-net with cut:

\begin{definition}
\begin{enumerate}
\item A \emph{cut tree} is an unordered pair of an expansion tree t of
  type $A$, and an expansion tree s of type $\bar{A}$, where $A$ is a
  formula of classical propositional logic not equal to $\top$ or
  $\bot$.  The \emph{positive term} in the cut is the term of type
  $\bar{a}$ or $A \land B$.  We write a cut tree $t \bowtie s$, where
  by convention the positive term is written on the left (when it is
  known which of $s$ and $t$ is the positive term).

\item A typed forest with cut is a finite forest of typed
  propositional expansion trees, witnesses and cut trees, such that a
  wire-variable $x$ occurs at most once, and occurs if and only if its
  dual $\bar{x}$ occurs.  The type of a typed forest with cut is the
  forest of types of its roots which are not cuts.

\item An \emph{expansion-net with cut} is a typed forest with cut,
  derivable in $\LKE$ plus the rule
\[
\begin{prooftree}
F, \ t:A \qquad G, \ s:\bar{A}
\justifies
F, \ G, \ t\bowtie s
\using \Cut
\end{prooftree}
\] 
\end{enumerate}
\end{definition}

Extending the correctness criterion to cuts is trivial, as usual in
proof-nets: we simply treat the cut $t \bowtie s$ as though it were a
conjunctive witness $t \ox s$.  (i.e. an unswitched binary node).  The
notions of subnet, kingdom etc. carry over in an obvious manner, as
does the sequentialization theorem.

\begin{example}
\label{ex:runningex}
The following is a correct expansion-net with cuts: it is derived by
cutting the expansion net in Example~\ref{ex:forestexample} with the expansion-net
witnessing the associativity of $\land$:
\[
    \begin{tikzpicture}[level distance=6.5mm, grow=up, sibling
    distance=10mm, edge from parent path= {[<-](\tikzparentnode) to
      (\tikzchildnode)}]]
  \node at (-1, 0) {$+$} 
    child [sibling
    distance=10mm]{node {$\bar{x}$}} 
    child [sibling
    distance=10mm] {node{$\bar{y}$}}
    child [sibling
    distance=10mm] {node{$\bar{z}$}};
  \node at (5,0) {$\bowtie$}
     child [sibling
    distance=30mm]{node {$\lor$} child [sibling
    distance=10mm]{node {$(\bar l)$}}
                                 child [sibling
    distance=10mm]{node (or) {$\lor$}
        child [sibling
    distance=10mm] {node(barm){$(\bar{m})$}}
        child [sibling
    distance=10mm] {node{$+$} child {node {$\ox$}
             child {node (barn){$(\bar{n})$}}
             child {node (baro){$(\bar{o})$}}}}}}
    child [sibling
    distance=30mm] {node {$+$} 
           child [sibling
    distance=15mm] {node{$\ox$} child {node {$(z)$}}
                                child {node {$+$} 
           child [sibling
    distance=15mm] {node{$\ox$} 
                               child [sibling
    distance=10mm]{node{$(w)$}}
                               child [sibling
    distance=10mm] {node {$\lor$} child {node{$*$}} child
                                 {node{$(y)$}}}}
                       child [sibling
    distance=15mm]{node{$\ox$} 
                               child [sibling
    distance=10mm] {node{$(v)$}}
                               child [sibling
    distance=10mm] {node {$\lor$} child {node{$(x)$}}
                                  child {node{$*$}}}}}}};
    \node at (1,0) {$+$}     child {node {$\ox$} 
      child {node{$(\bar{v})$}}
      child {node{$(\bar{w})$}}};
  \node at (9,0) {$+$}
        child {node {$\ox$}
                child {node {$+$}
                   child {node {$\ox$}
                           child {node {$(l)$}}
                           child {node {$(m)$}}}}
                child {node {$\lor$}
                   child {node{$(n)$}}
                   child {node{$(o)$}}}};
\end{tikzpicture}
\]
\end{example}

\subsection{The basic cut-reduction operations}
\label{sec:basic-cut-reduction}
As in Gentzen-style cut-elimination for sequent calculi,
cut-elimination in expansion nets is based on a number of basic
operations.  In general, the application of these rules may not
terminate, but we can find a \emph{strategy} for applying these rules
such that there is a measure on proofs which decreases. 
We define cut-reduction, not just on expansion nets, but on $AC$ typed
forests which might, in general, have witnesses as roots. This allows
us to perform cut-reduction on subnets of an expansion-net, just as
one eliminates cuts in a subproof in the sequent calculus. 

Unlike usual cut-elimination results, the cut-elimination we define in
this section does not in general preserve the type of derivations.
This is for two reasons.  The first has been mentioned above: namely
that expansion-nets with cut are sequent-complete (can prove all
sequents derivable in \LK) while expansion-nets without cut are only
formula complete.  The second reason concerns cut-elimination in a
general $AC$ typed forest.  Consider the following example:

\[ 
\begin{tikzpicture}[level distance=6.5mm, grow=up, sibling
    distance=10mm, edge from parent path= {[<-](\tikzparentnode) to
      (\tikzchildnode)}]]
  \node at (-2, 0) {$[p]$} child {node(x){$x$}};
\node at (0, 0) {$\bowtie$} child {node{$+$} child {node(z){$z$}} child {node(y){$y$}} }  
                            child {node{$+$} child {node(barx){$\bar x$}} };
\node at (2.5,0) {$\bar p \land \bar p$} child {node {$\ox$}
                  child {node (barz){$(\bar z)$}}
                  child {node (bary){$(\bar y)$}}};
\draw [->, in=50, out = 150] (barx) to (x);
\draw [->, in=30, out = 150] (bary) to (y);
\draw [->, in=30, out = 150] (barz) to (z);
\end{tikzpicture}
\]

\noindent Cuts of this form are reduced by ``yanking'': the axiom link
between the $x$ and $\bar x$, and the cut, disappear, leaving the
following net:
 
\[
\begin{tikzpicture}[level distance=6.5mm, grow=up, sibling
    distance=10mm, edge from parent path= {[<-](\tikzparentnode) to
      (\tikzchildnode)}]]
 
\node at (0, 0) {$p$} child {node {$+$} child {node{$z$}} child {node{$y$}}} ;  
                     
\node at (2.5,0) {$\bar p \land \bar p$} child {node {$\ox$}
                  child {node {$(\bar z)$}}
                  child {node {$(\bar y)$}}};
\end{tikzpicture}
\] 

However, the type of the net has changed: we have replaced a root of
type $[p]$ with a root of type $p$. We will use the term
$\emph{closure}$ to describe a sequent resulting from deleting some
formulae, and replacing others with their underlying type:

\begin{definition}
  Let $\G$ be a forest of types. A \emph{closure} of $\G$ is a
  forest $\G'$ of types, together with an injective function from 
  the roots of $\G'$ to the roots of $G$ which either preserves types
  or replaces a type with its underlying classical formula (see Definition~\ref{def:types}).
\end{definition}

\noindent Notice that if $\G$ does not contain any witness types, a
closure of $\G$ is just a sub-multiset of $\G$.

Our cut-elimination argument relies on isolating, in an expansion net
$F$, a subnet $G$ containing only one cut, and replacing it with a
cut-free AC typed forest $G'$ whose type is a closure of $G$: that is,
we replace each non-cut root $t$ of $G$ with $f^{-1}(G)$, where $f$
given to us by the closure of the type of $G$.  If $t$ has no
pre-image, and is a root, we can delete it.  If $t$ has no pre-image,
and is a root, we would like to replace it by $*$ (representing that
the formula previously introduced by $t$ is now introduced by
weakening) -- however, we must be careful to ensure that the $*$
occurs within a default weakening. For example, consider the following
expansion-net, with marked subnet:

\[
\begin{tikzpicture}[level distance=6.5mm, grow=up, sibling
    distance=10mm, edge from parent path= {[<-](\tikzparentnode) to
      (\tikzchildnode)}]]
    \node at (-2.5, 0) {$\bar{p}\lor \bar p$} child {node  {$\lor$}
      child {node(bary){$\bar y$}} 
child {node(barx){$\bar x$}}};
\node at (0, 0) (cut) {$\bowtie$} child[sibling distance = 25mm] {node{$+$} child[sibling distance = 10mm]  {node {$\ox$} child
    {node{$+$} child {node(barv){$\bar v$}}}
                                             child {node{$+$} child {node {$\bar w$}} child {node
                                  {$\bar z$}}}}}
                            child[sibling distance = 25mm]  {node {$\lor$} child[sibling distance = 10mm]  {node {$+$}
                                child {node{$y$}} child {node{$x$}}}
                                                 child [sibling distance = 10mm] {node (star){$*$}}};
\node at (2.5,0)  {$q$} child {node (plus){$+$} child {node(z) {$z$}}};
\node at (3.5,0) {$p \lor p$} child {node {$\lor$} child {node {$+$} child {node (v){$v$}}}
                          child {node {$+$} child {node (w){$w$}}}};
 \begin{pgfonlayer}{background}
       \node [fill=black!20,fit=(cut) (star) (barv) ] {};
 \node [fill=black!20,fit=(barx) (bary)] {};
\node [fill=black!20,fit=(w)(v)] {};
\node [fill=black!20,fit=(plus)(z)] {};
     \end{pgfonlayer}
\end{tikzpicture}
\]
\noindent The marked subnet $G$ has type $[\bar p], [\bar p], q, [p], [p]$.  The AC typed forest
$G' = \bar{x}:[p], \bar{y}:[q], (x+y): p$ has type which is a completion of
the marked subnet, via an injection $f$ which does not have a preimage
for $q$ or the first copy of $[p]$.  The trees missing a pre-image are
$(z)$ (which is a root) and $w$ (which is in a disjunction), so the
result of replacing $G$ by $G'$ is an expansion net:
\[
\begin{tikzpicture}[level distance=6.5mm, grow=up, sibling
    distance=10mm, edge from parent path= {[<-](\tikzparentnode) to
      (\tikzchildnode)}]]
    \node at (0, 0) {$\bar{p}\lor \bar p$} child {node  {$\lor$}
      child {node(bary){$\bar y$}} 
child {node(barx){$\bar x$}}};

\node at (2,0) {$p \lor p$} child {node {$\lor$} child {node {$+$} child {node (v){$y$}}child {node (v){$x$}}}
                          child {node {$*$}}};
\end{tikzpicture}
\]

The following proposition shows that we can replace a subnet $G$ by
another $AC$ typed forest whose type is a closure of $G$, provided the
deleted roots fall inside disjunctions or expansions:
\begin{proposition}
\label{prop:subnetfornetcomp}
\item Let $G$ be a subnet of an $AC$ typed forest $F$, and let $G'$ be
  an $AC$ typed forest whose type is a closure $G$: that is, there is
  an injective function $f$ from the non-cut roots of $G'$ to those of
  $G$ such that $f$ either preserves types or maps a term of witness
  type to a term of its underlying type. Call a node $t$ of $F$ weak
  if it is a root of $G$ but has no $f$-preimage. Suppose further that
  each weak node of $F$ is either a root of $F$, or the sucessor of a
  switched node $Y$ (a disjunction or expansion) such that at least
  one other successor of $Y$ is not weak.  Then, replacing $G$ by $G'$
  in $F$ (by replacing $t$ by $f^{-1}(t)$ if $t$ has an $f$-preimage,
  deleting $t$ if it is weak and a root or a successor of an
  expansion, and replacing $t$ by $*$ if it is weak and the successor
  of a disjunctive node, and replacing the cuts of $G$ by the cuts of
  $G'$) yields an $AC$ typed forest whose type is a closure of the
  type of $F$.
\end{proposition}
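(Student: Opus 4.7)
My approach is to prove three properties: the substitution yields a well-formed typed forest, the resulting root types form a closure of the root types of $F$, and the result is $AC$-correct. The first two are routine case checks; the acyclicity is the substance of the proof.

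For well-formedness, each substitution either preserves types (a non-weak root of $G$ is replaced by its $f$-preimage in $G'$, whose type is either identical or the underlying classical formula) or handles a weak node via the side conditions. A weak root of $F$ is dropped; a weak node under a switched parent $Y$ becomes $*$ (if $Y$ is a disjunction) or is deleted (if $Y$ is an expansion). The hypothesis that each such weak node has a non-weak sibling is used here: the disjunction becomes a legitimate default weakening $(s \lor *)$ or $(* \lor s)$ where $s$ is a proper tree, and the expansion retains at least one summand. Axiom links lie entirely within $G$ or within $F \setminus G$ because substructures are axiom-closed, so swapping $G$ for $G'$ (with fresh wire variables) preserves the global pairing of dual variables. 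The closure injection from non-cut roots of $F'$ to non-cut roots of $F$ is then immediate: retain each root of $F \setminus G$ (same type), send each non-cut root of $G'$ via $f$ to the non-weak root of $G$ it replaces (same or underlying type), and drop weak roots of $F$.

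For $AC$-correctness I argue by contradiction: suppose some switching $\sigma$ of $F'$ admits a cycle $C$ in $\sigma(F')$. Edges between $G'$ and the rest of $F'$ pass only through the \emph{boundary nodes} --- the non-cut roots of $G'$, which correspond via $f$ to non-weak roots of $G$ in $F$ --- since axiom links never cross the boundary, thanks to axiom-closure. Hence $C$ decomposes into arcs alternating between $G'$ and $F \setminus G$, joined at boundary nodes. If $C$ lies entirely in $G'$ it contradicts $AC$-correctness of $G'$ under $\sigma$ restricted to $G'$; if entirely in $F \setminus G$, extend $\sigma$ to a switching $\tilde{\sigma}$ of $F$ by arbitrary choices on switched nodes of $G$, yielding a cycle in $\tilde{\sigma}(F)$, contradicting $AC$-correctness of $F$. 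Otherwise, pick any arc of $C$ lying in $F \setminus G$: its endpoints are boundary nodes, corresponding via $f$ to roots $X, Y$ of $G$ in $F$; under $\tilde{\sigma}$ that arc is a switching path (or a loop, if $X = Y$) between $X$ and $Y$ lying outside $G$, violating the subnet property of $G$ in $F$.

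The main obstacle is this final subcase: one must cleanly decompose $C$ at boundary nodes, lift $\sigma$ consistently to a switching of the whole of $F$ (trivial on $F \setminus G$, arbitrary on $G$), and apply the subnet definition allowing $X = Y$ --- which is admitted by reading ``every switching path between $X$ and $Y$'' quantified over all pairs of roots. The remaining bookkeeping --- tracking where axiom links live, checking that the default weakenings created are well-formed, and verifying the closure injection --- is routine given the hypotheses.
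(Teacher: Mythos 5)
Your proof is correct and is essentially the detailed version of the argument the paper leaves implicit: the paper's own proof of this proposition is the single line ``An easy examination of the correctness criterion,'' and the examination you carry out --- decomposing a hypothetical switching cycle at the boundary roots of $G'$ (the only places where edges can cross between $G'$ and $F\setminus G$, by axiom-closure) and invoking the subnet condition that switching paths between roots of $G$ stay inside $G$ --- is exactly the intended one. The one point deserving a word more of care is the lifting of a switching of the new forest back to $F$ at those disjunctions of $F\setminus G$ that become default weakenings (hence unswitched) once a weak successor is replaced by $*$: there the lift must select the surviving successor rather than being ``trivial on $F\setminus G$,'' but this does not affect your argument.
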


\begin{proof}
  An easy examination of the correctness criterion.
\end{proof}

We now introduce the basic reductions of expansion-nets, and show that
they preserve AC correctness.  We illustrate the reductions by showing
how to reduce the net in Example~\ref{ex:runningex} to cut-free form.

We will define a \emph{logical cut} to be one in which the positive
cut term is an expansion consisting of a single witness.  In case the 
cut has non-atomic type, the definition of cut-reduction is easy: 
\begin{proposition}[Logical cut -- $\land/\lor$]
\label{def:logcutandor}
Let $G = F, (s_1 \ox s_2)\bowtie (t_1 \lor t_2)$ be an AC typed
forest, such that $t_1, t_2 \neq *$.  Then $G' = F, s_1 \bowtie t_1,
s_2 \bowtie t_2 $ is an AC typed forest with the same type as $G$.
\end{proposition}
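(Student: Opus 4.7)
\medskip

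\noindent\textbf{Proof plan.} First I would verify that $G$ and $G'$ have the same type. Since cut trees contribute nothing to the type of a typed forest with cut, the type of $G$ is the type of $F$. The witness $s_1 \ox s_2$ has witness type $A\ox B$ with underlying formula $A\land B$, so the cut dual $t_1\lor t_2$ has type $\bar A\lor\bar B$ with $t_1:\bar A$ and $t_2:\bar B$. Consequently, $s_1:A$ and $t_1:\bar A$ form a well-formed cut, as do $s_2:B$ and $t_2:\bar B$, and since these two cuts also contribute nothing, the type of $G'$ is again the type of $F$.

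For AC-correctness the strategy is the usual contrapositive: assume some switching $\sigma'$ of $G'$ produces a cycle in $\sigma'(G')$, and exhibit a switching $\sigma$ of $G$ whose graph $\sigma(G)$ contains a cycle, contradicting AC-correctness of $G$. Every switched node of $G'$ is a switched node of $G$, so $\sigma'$ determines $\sigma$ on all nodes except the single additional switched node $t_1\lor t_2$ of $G$ (switched because neither $t_1$ nor $t_2$ is $*$); I will fix $\sigma(t_1\lor t_2)$ case-by-case. Outside the ``central structures'' (the cut-gadgets and the nodes $s_1\ox s_2$, $t_1\lor t_2$), the switching graphs of $G$ and $G'$ coincide, so any cycle in $\sigma'(G')$ disjoint from the central structures would already be a cycle in $\sigma(G)$.

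Thus assume the cycle meets the central structures of $G'$. A simple cycle visits each cut at most once, entering through one of its two attachment points and leaving by the other. I would do case analysis on which of the two cuts of $G'$ the cycle traverses:
\begin{itemize}
\item Cycle uses only one cut, say $s_1\bowtie t_1$. Then there is a path in the common part joining $s_1$ to $t_1$. Setting $\sigma(t_1\lor t_2)=t_1$, the central structure of $G$ gives a path $s_1 - (s_1\ox s_2) - \mathrm{cut} - (t_1\lor t_2) - t_1$, which closes with the common-part path into a cycle in $\sigma(G)$. The case of $s_2\bowtie t_2$ alone is symmetric.
\item Cycle uses both cuts. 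Tracing the cycle, the common-part paths it uses must connect the outer nodes $\{s_1,s_2,t_1,t_2\}$ in one of two patterns: either (i) a common-part path $s_1{-}s_2$ together with a common-part path $t_1{-}t_2$, or (ii) common-part paths $s_1{-}t_2$ and $t_1{-}s_2$. In pattern (i), the $s_1{-}s_2$ common path alone closes with the unswitched tensor-path $s_1 - (s_1\ox s_2) - s_2$ to form a cycle in $\sigma(G)$ for any choice of $\sigma(t_1\lor t_2)$. In pattern (ii), choose $\sigma(t_1\lor t_2)=t_2$; the common-part path $s_1{-}t_2$ closes with the central-structure path $t_2 - (t_1\lor t_2) - \mathrm{cut} - (s_1\ox s_2) - s_1$.
\end{itemize}

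In each case a cycle in $\sigma(G)$ is produced, contradicting AC-correctness of $G$; hence every switching of $G'$ is acyclic. I expect the main obstacle to be the bookkeeping in the second case: ensuring the claimed common-part connections genuinely fall out of tracing the cycle, and that the closure through $G$'s central tree does not reuse any vertex of the common-part path used to close it. Both points are routine once one observes that the central structures of $G$ and $G'$ are internally vertex-disjoint from the common part and that traversing an unswitched binary node in a simple cycle forces distinct entry and exit attachments.
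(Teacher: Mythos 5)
Your proof is correct. Note that the paper itself gives no argument for this proposition at all: it is stated and immediately used, the preservation of AC-correctness under the multiplicative $\ox/\Par$ cut step being treated as a routine check of the criterion (in the spirit of the ``easy examination of the correctness criterion'' offered for Proposition~\ref{prop:subnetfornetcomp}, and of the classical argument for \MLLminus{}+\mix{} nets). What you supply is exactly the standard switching-transfer argument, and you handle the one point that actually requires care: the node $t_1\lor t_2$ is switched in $G$ but absent from $G'$, so a putative cycle in a switching of $G'$ must be converted into a cycle in a switching of $G$ by choosing the switch position at $t_1\lor t_2$ after seeing how the cycle meets the cut gadgets. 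Your three-way case split (one cut used; both cuts used with common paths $s_1{-}s_2$ and $t_1{-}t_2$; both cuts used with common paths $s_1{-}t_2$ and $t_1{-}s_2$) is exhaustive because each cut node has degree two and a simple cycle entering it must leave by the other attachment, and in each case the chosen closure through the central tree of $G$ is vertex-disjoint from the common-part path, as you observe. The type computation is also right, since cut roots do not contribute to the type of a typed forest with cut. So the proposal fills a gap the paper leaves implicit rather than diverging from a proof it contains.
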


Reducing the logical cut in Example~\ref{ex:runningex}, we obtain the
following net:
\[
    \begin{tikzpicture}[level distance=6.5mm, grow=up, sibling
    distance=8mm, edge from parent path= {[<-](\tikzparentnode) to
      (\tikzchildnode)}]]
  \node at (-1, 0) {$+$} 
    child [sibling
    distance=10mm]{node {$\bar{x}$}} 
    child [sibling
    distance=10mm] {node{$\bar{y}$}}
    child [sibling
    distance=10mm] {node{$\bar{z}$}};
\node at (7.5,0) {$\bowtie$} child {node {$+$} child {node {$\bar l$}}}
                           child {node {$+$} child {node {$z$}}};
  \node at (5,0) {$\bowtie$}
                                 child [sibling
    distance=20mm]{node (or) {$\lor$}
        child [sibling
    distance=10mm] {node(barm){$(\bar{m})$}}
        child [sibling
    distance=10mm] {node{$+$} child {node {$\ox$}
             child {node (barn){$(\bar{n})$}}
             child {node (baro){$(\bar{o})$}}}}}
    child [sibling
    distance=30mm] {node {$+$} 
           child [sibling
    distance=15mm] {node{$\ox$} 
                               child [sibling
    distance=10mm]{node{$(w)$}}
                               child [sibling
    distance=10mm] {node {$\lor$} child {node{$*$}} child
                                 {node{$(y)$}}}}
                       child [sibling
    distance=15mm]{node{$\ox$} 
                               child [sibling
    distance=10mm] {node{$(v)$}}
                               child [sibling
    distance=10mm] {node {$\lor$} child {node{$(x)$}}
                                  child {node{$*$}}}}};
    \node at (1,0) {$+$}     child {node {$\ox$} 
      child {node{$(\bar{v})$}}
      child {node{$(\bar{w})$}}};
  \node at (9,0) {$+$}
        child {node {$\ox$}
                child {node {$+$}
                   child {node {$\ox$}
                           child {node {$(l)$}}
                           child {node {$(m)$}}}}
                child {node {$\lor$}
                   child {node{$(n)$}}
                   child {node{$(o)$}}}};
\end{tikzpicture}
\]

As discussed above, in case of an atomic logical cut, the whole forest has the form $G = F, (x)\bowtie
t$, where $t$ is a possibly nontrivial sum of witnesses.  We want to
reduce this cut, as in usual proof-nets, by ``yanking'':

\begin{proposition}[Logical cut -- atomic]
\label{def:logcutatomic}
Let $G = F, (x)\bowtie t$ be an AC typed forest.  Then $F[\bar{x} :=
t]$ is an AC typed forest with type $G'$ a closure of $G$.
\end{proposition}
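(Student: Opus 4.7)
The plan is to make the substitution precise, verify the typed-forest conditions, and then establish AC-correctness by lifting any putative switching cycle in $F[\bar{x}:=t]$ to one in $G$.

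First I would describe the substitution explicitly. Since $\bar{x}$ has witness type $[\bar{p}]$, the typing rules of Figure~\ref{fig:typing-derivations} force it to occur as a summand of some expansion $E=(\bar{x}+v_1+\cdots+v_n)$ of type $\bar{p}$ in $F$. Writing $t=(u_1+\cdots+u_m)$ with each $u_i:[\bar{p}]$, I read $F[\bar{x}:=t]$ as the forest obtained from $F$ by replacing $E$ with the merged expansion $E'=(u_1+\cdots+u_m+v_1+\cdots+v_n)$, still of type $\bar{p}$, and by deleting the cut root $(x)\bowtie t$. Typing is then immediate; wire variables remain dually paired since $x$ and $\bar{x}$ are both removed while the internal variables of $t$ migrate into the ambient forest; and the types of all roots are preserved, so $G'$ is a closure (in fact the trivial one) of the type of $G$.

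Next I would prove AC-correctness by constructing, from any switching $\sigma'$ of $F[\bar{x}:=t]$, a matching switching $\sigma$ of $G$. Let $\sigma$ agree with $\sigma'$ on every switched node common to both forests, and split on $\sigma'(E')$: if $\sigma'(E')=v_j$, set $\sigma(E)=v_j$ and choose $\sigma(t)$ arbitrarily; if $\sigma'(E')=u_i$, set $\sigma(E)=\bar{x}$ and $\sigma(t)=u_i$. Given a cycle $C$ in $\sigma'(F[\bar{x}:=t])$, I would lift each edge of $C$ to an edge or path in $\sigma(G)$: edges not incident to $E'$ lift verbatim; the two edges of $C$ at $E'$ lift directly to edges at $E$ in the $v_j$ case, and otherwise get replaced by the detour $u_i \to t \to ((x)\bowtie t) \to (x) \to x \to \bar{x} \to E$ through the cut and the yanked axiom link. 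Because $(x)\bowtie t$ is an unswitched binary node, $(x)$ is a trivial expansion, and $\sigma$ was defined so that both $\sigma(t)=u_i$ and $\sigma(E)=\bar{x}$, every edge of the detour is live under $\sigma$. The resulting closed walk in $\sigma(G)$ has pairwise distinct edges, hence contains a genuine cycle, contradicting AC-correctness of $G$.

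The main obstacle is verifying that the detour really grafts onto the rest of the lifted walk to yield a cycle. The delicate point is that $C$ may visit several summands $u_{i'}$ (for $i'\neq i$) via axiom links entering their internal leaves, and one must check that the detour inserted at the single crossing of $E'$ does not re-use any node touched by those visits. This is handled by observing that the detour's interior nodes lie entirely in the cut region — the cut itself, $(x)$, $x$, $\bar{x}$, and the single top edge of $u_i$ — all of which are absent from the rest of $C$'s translation, since $C$ visits any $u_{i'}$ only through its internal subtree and its axiom links.
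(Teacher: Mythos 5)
The paper itself states this proposition without proof (it is presented as routine ``yanking''), so there is no official argument to compare against; your switching-based verification of $AC$-correctness is the natural one and, as far as it goes, it is sound: the detour $u_i - t - ((x)\bowtie t) - (x) - x - \bar{x} - E$ is indeed live under the switching you construct, its interior vertices do not survive into $F[\bar{x}:=t]$, and so a cycle downstairs lifts to a cycle upstairs, contradicting $AC$-correctness of $G$.

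There is, however, a genuine gap in your typing analysis. You assert that the typing rules force $\bar{x}$ to occur as a summand of an expansion $E=(\bar{x}+v_1+\cdots+v_n)$ inside $F$, and you conclude that the closure is ``in fact the trivial one.'' Both claims are false: a typed forest (unlike an e-annotated sequent) is explicitly allowed to have bare \emph{witnesses} as roots, so $\bar{x}$ may occur as a root of $F$ of witness type $[\bar{p}]$. This is not a corner case the author forgot to exclude --- it is precisely the situation illustrated in the example immediately preceding the proposition, where the paper remarks ``we have replaced a root of type $[p]$ with a root of type $p$,'' and it is the entire reason the statement is phrased in terms of a \emph{closure} rather than type preservation. (Such witness roots arise unavoidably during cut-elimination, e.g.\ from the duplicated kingdoms in Proposition~\ref{def:structcutcontr}.) The missing case is easy to repair: if $\bar{x}$ is a root, then $F[\bar{x}:=t]$ replaces the root $\bar{x}:[\bar{p}]$ by the expansion $t:\bar{p}$, the injection on roots sends this new root to the old one while passing to the underlying formula, and the acyclicity argument is even simpler (the new root $t$ has degree one in every switching graph, hence lies on no cycle, and everything else lifts verbatim). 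One further degenerate possibility worth a sentence: $\bar{x}$ cannot be a summand of $t$ itself, since that would already create a switching cycle through the cut node in $G$, contradicting the hypothesis. With the case split completed and the claim about triviality of the closure retracted, your proof is correct.
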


\noindent Reducing the atomic logical cut in our example, we obtain:

\[
    \begin{tikzpicture}[level distance=6.5mm, grow=up, sibling
    distance=8mm, edge from parent path= {[<-](\tikzparentnode) to
      (\tikzchildnode)}]]
  \node at (-1, 0) {$+$} 
    child [sibling
    distance=10mm]{node {$\bar{x}$}} 
    child [sibling
    distance=10mm] {node{$\bar{y}$}}
    child [sibling
    distance=10mm] {node{$\bar{z}$}};
  \node at (4.5,0) {$\bowtie$}
                                 child [sibling
    distance=30mm]{node (or) {$\lor$}
        child [sibling
    distance=10mm] {node(barm){$(\bar{m})$}}
        child [sibling
    distance=10mm] {node{$+$} child {node {$\ox$}
             child {node {$(\bar{n})$}}
             child {node (baro){$(\bar{o})$}}}}}
    child [sibling
    distance=30mm] {node {$+$} 
           child [sibling
    distance=15mm] {node{$\ox$} 
                               child [sibling
    distance=10mm]{node{$(w)$}}
                               child [sibling
    distance=10mm] {node {$\lor$} child {node{$*$}} child
                                 {node{$(y)$}}}}
                       child [sibling
    distance=15mm]{node{$\ox$} 
                               child [sibling
    distance=10mm] {node{$(v)$}}
                               child [sibling
    distance=10mm] {node {$\lor$} child {node{$(x)$}}
                                  child {node{$*$}}}}};
    \node at (1,0) {$+$}     child {node {$\ox$} 
      child {node{$(\bar{v})$}}
      child {node{$(\bar{w})$}}};
  \node at (8,0) {$+$}
        child {node {$\ox$}
                child [sibling
    distance=20mm]{node {$+$}
                   child [sibling
    distance=8mm] {node {$\ox$}
                           child {node {$(z)$}}
                           child {node {$+$} child {node(m){$m$}}}}}
                child [sibling
    distance=10mm]{node {$\lor$}
                   child [sibling
    distance=10mm]{node {$+$} child{ node(n){$n$}}}
                   child {node {$+$} child {node(o){$o$}}}}};
 \begin{pgfonlayer}{background}
       \node [fill=black!20,fit=(or) (barm) (barn) (baro)] {};
 \node [fill=black!20,fit=(o) (n)] {};
\node [fill=black!20,fit=(m)] {};
     \end{pgfonlayer}
\end{tikzpicture}
\]

Cut against contraction is dealt with by reducing the \emph{positive
  width} of the cut: the number of witnesses appearing in the positive
cut term.  This is achieved by duplicating a subnet with the positive
cut term as a root (the equivalent of duplicating a subproof with the
positive cut-formula in the conclusion).  In duplicating a subnet we
must make sure to rename the wire-variables so that no variable occurs
more than once: given a term $t$, we use the notation $t^L$, $t^R$ to
denote two copies of $t$ where each wire variable $x$ has been
replaced by fresh variables $x^L$, $x^R$, and each wire variable
$\bar{x}$ has been replaced by fresh variables $\bar{x}^L$,
$\bar{x}^R$, such that $\bar{x}^L$ is dual to $x^L$, and so on.

\begin{proposition}[Structural cut -- contraction]
\label{def:structcutcontr}
Let 
\[G = F, (s_1+ \dots + s_n) \bowtie_X t\] be an AC typed forest, where
$s = (s_1+ \dots + s_n)$ is nontrivial expansion.  Let $s= s_1 + s_2$
(that is, $s_1$ and $s_2$ partition $s$) and let $w_1\dots, w_n, c_1,
\dots c_m, t$ be a subnet of $G$, whose roots other than $t$ are all
either witnesses (the $w_i$), or cuts (the $c_i$).  Let $F'$ be the
forest defined by replacing each $w_i$ by $(w^L_i+w^R_i)$.  Then \[G'
= F', s_1 \bowtie t^L, s_2 \bowtie t^R\] is an AC typed forest, and
the type of $G'$ is a closure of the type of $G$.
\end{proposition}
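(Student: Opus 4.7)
The plan is to verify that $G'$ is a well-formed typed forest with cut whose type is a closure of that of $G$, and then that $G'$ is AC-correct. For well-formedness and type: the only wires being renamed are the \emph{internal} wires of $H$ (these become $L$- and $R$-labelled in the two copies $H^L, H^R$), while wires in $s$ and in $F\setminus H$ are untouched; axiom-closure of $H$ guarantees no wire-pair crosses its boundary, so the duplicated pairings remain consistent. Each witness root $w_i$ of $G$ (with witness type $\tau_i$) is replaced by the expansion root $(w_i^L+w_i^R)$ of $G'$ (with type the underlying classical formula of $\tau_i$), while every other non-cut root of $G$ is preserved; cuts contribute nothing to the type. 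The evident injection --- the identity on unchanged roots and $(w_i^L+w_i^R)\mapsto w_i$ otherwise --- witnesses the closure.

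For AC-correctness I would argue by contradiction. Suppose some switching $\sigma$ of $G'$ produces a simple cycle $C$. Partition the vertex set of $G'$ into the preserved part $A=F\setminus H$, the two duplicated subnets $H^L$ and $H^R$, and the new material consisting of the root-expansions $E_i=(w_i^L+w_i^R)$, the replacement cut-roots $K_1=s_1\bowtie t^L$ and $K_2=s_2\bowtie t^R$, and the subtrees of $s_1,s_2$. Two observations restrict where $C$ can live. First, each $E_i$ is a root with exactly two children, so has $\sigma$-degree $1$ and cannot lie on $C$; in particular no $H^L$-to-$H^R$ crossing occurs via any $E_i$. Second, axiom-closure of $H$ implies no axiom edge links $H^L$ (or $H^R$) to anything outside, so the unique external edge of $H^L$ is $t^L-K_1$ (and symmetrically for $H^R$); since $C$ is a simple cycle it cannot both enter and leave $H^L$ through this single bridge, so $C$ either lies wholly inside $H^L$ --- yielding after undoing the renaming a cycle in the subgraph $H\subseteq G$ under a corresponding switching of $G$, contradicting AC of $G$ --- or avoids $H^L$ entirely, and symmetrically for $H^R$.

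The remaining case, where $C$ avoids $H^L\cup H^R$, is the main obstacle. Here each $K_j$ has its $t^?$-neighbour as a dead end, so $C$ cannot contain $K_j$; and $s_j$'s only parent is $K_j$, so $C$ cannot contain $s_j$ either. Hence $C$ uses only nodes of $A$, subtree-internal nodes beneath the witnesses of $s_1,s_2$, and the axiom links between these --- all of which are inherited unchanged from $G$. The subtlety is that $G'$ permits independent switchings at $s_1$ and $s_2$ whereas $G$ couples them via the single expansion $s$; but since $C$ contains neither $s_1$ nor $s_2$ as a vertex, no edge of $C$ is incident on these expansions, so the switching choice at $s$ in $G$ is irrelevant to the existence of $C$. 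Thus $C$ is a simple cycle in $G$ under any switching extending $\sigma$'s choices on the other switched nodes, the final contradiction with AC of $G$.
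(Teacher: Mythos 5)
Your proof is correct in substance but follows a genuinely different route from the paper's. The paper's proof is local and modular: it forms the subnet $J = H, w_1,\dots,w_n,c_1,\dots,c_m,(s_1+s_2)\bowtie t$, where $H,(s_1+s_2)$ is the \emph{kingdom} of the positive cut term, asserts (without detail) that the duplicated forest $J'$ is AC with type a closure of that of $J$, and then invokes Proposition~\ref{prop:subnetfornetcomp} to transplant $J'$ back into $F$. You instead argue globally, by analysing where a hypothetical switching cycle of $G'$ could live relative to the two copies of the duplicated subnet. What your approach buys is precisely the content the paper elides behind ``it is easy to see'': the observation that each new root expansion $(w_i^L+w_i^R)$ has degree one in every switching graph, so each copy of the duplicated subnet hangs off the rest of $G'$ by the single bridge $t^L$--$K_1$ (resp.\ $t^R$--$K_2$), and your treatment of the decoupled switchings at $s_1$ and $s_2$ is exactly the point at which the paper instead leans on the kingdom of $(s_1+s_2)$. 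What the paper's route buys is reusability: Proposition~\ref{prop:subnetfornetcomp} does the bookkeeping for all four cut reductions at once.

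One caveat. Your degree-one argument for the $E_i$ presupposes that each $w_i$ is a root of $G$, so that $(w_i^L+w_i^R)$ really is a root of $G'$. This is the reading forced by the literal phrase ``replacing each $w_i$ by $(w_i^L+w_i^R)$'', but a witness root of the subnet could in principle sit inside a pre-existing expansion of $G$, in which case the two copies are absorbed into that expansion, which is switched, may have a predecessor, and hence can lie on a cycle; the claimed ``unique external edge'' of $H^L$ then fails. The repair is to use the subnet property you already have: a cycle leaving $H^L$ at some $w_i^L$ and re-entering $H^L\cup H^R$ anywhere else would project to a switching path in $G$ between two roots of the duplicated subnet that travels outside it, contradicting Definition~\ref{def:subnet}. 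With that one additional appeal your argument covers the general case.
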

\begin{proof}
  Let $H,(s_1+s_2)$ be the kingdom of $(s_1 + s_2)$: then
  \[J = H, w_1, \dots, w_n, c_1, \dots c_m, (s_1+s_2)\bowtie t\] is a
  subnet of $G$.  It is easy to see that \[J' = H, (w^L_1+w^R_1),
  \dots, (w^L_n+w^R_n), c^L_1, c^R_1 \dots c^L_m, c^R_m, s_1\bowtie
  t^L, s_2\bowtie t^R\] is an $AC$ typed forest with type a closure of
  the type of $J$.  Since $J$ was a subnet, the result of replacing $J$
  in $F$ by $J'$ also $AC$; the result follows.
\end{proof}

\begin{remark}
  The restriction that the duplicated subnet have only witnesses and
  cuts as roots ensures that we can ``contract'' the duplicated
  conclusions by adding expansions.  The \emph{kingdom} of the positive
  cut-term always yields such a subnet; if a root $s$ of the kingdom
  of $t$ were a disjunction or expansion, we could remove that node to
  yield a smaller subnet with $t$ as a root.
\end{remark}

In the last step of our running example, the kingdom of the negative
branch of the cut is shaded.  Notice that, apart from the root taking
part in the cut, all the roots of the kingdom are witnesses.  Thus, we
can duplicate the kingdom, cutting each copy against one of the
witnesses on the positive branch of the cut:
\[
    \begin{tikzpicture}[level distance=6.5mm, grow=up, sibling
    distance=10mm, edge from parent path= {[<-](\tikzparentnode) to
      (\tikzchildnode)}]]
  \node at (0, 3.5) {$+$} 
    child [sibling
    distance=10mm]{node {$\bar{x}$}} 
    child [sibling
    distance=10mm] {node{$\bar{y}$}}
    child [sibling
    distance=10mm] {node{$\bar{z}$}};
  \node at (1,0) {$\bowtie$}
                                 child [sibling
    distance=40mm]{node (or) {$\lor$}
        child [sibling
    distance=10mm] {node(barm){$(\bar{m}_0)$}}
        child [sibling
    distance=10mm] {node{$+$} child {node {$\ox$}
             child {node {$(\bar{n}_0)$}}
             child {node (baro){$(\bar{o}_0)$}}}}}
    child [sibling
    distance=40mm] {node {$+$} 
           child [sibling
    distance=15mm] {node{$\ox$} 
                               child [sibling
    distance=10mm]{node{$(w)$}}
                               child [sibling
    distance=10mm] {node {$\lor$} child {node{$*$}} child
                                 {node{$(y)$}}}}};
  \node at (-1,0) {$\bowtie$}
                                 child [sibling
    distance=40mm]{node (or) {$\lor$}
        child [sibling
    distance=10mm] {node(barm){$(\bar{m}_1)$}}
        child [sibling
    distance=10mm] {node{$+$} child {node {$\ox$}
             child {node {$(\bar{n}_1)$}}
             child {node (baro){$(\bar{o}_1)$}}}}}
                       child [sibling
    distance=40mm]{node {$+$} child {node{$\ox$} 
                               child [sibling
    distance=10mm] {node{$(v)$}}
                               child [sibling
    distance=10mm] {node {$\lor$} child {node{$(x)$}}
                                  child {node{$*$}}}}};
    \node at (3,3.5) {$+$}     child {node {$\ox$} 
      child {node{$(\bar{v})$}}
      child {node{$(\bar{w})$}}};
  \node at (6.5,0) {$+$}
        child {node {$\ox$}
                child [sibling
    distance=25mm]{node {$+$}
                   child [sibling
    distance=10mm] {node {$\ox$}
                           child {node {$(z)$}}
                           child [sibling
    distance=10mm] {node {$+$} child {node(m){$m_0$}}child {node(m){$m_1$}}}}}
                child [sibling
    distance=25mm]{node {$\lor$}
                   child [sibling
    distance=15mm]{node {$+$} child [sibling
    distance=10mm]{ node(n){$n_0$}}child [sibling
    distance=10mm]{node(m){$n_1$}}}
                   child [sibling
    distance=15mm] {node {$+$} child [sibling
    distance=10mm] {node(o){$o_0$}} child [sibling
    distance=10mm]{node(m){$o_1$}}}}};
\end{tikzpicture}
\]

After some logical cuts, we arrive at the following net:
\[
    \begin{tikzpicture}[level distance=6.5mm, grow=up, sibling
    distance=10mm, edge from parent path= {[<-](\tikzparentnode) to
      (\tikzchildnode)}]]
  \node at (3.5, 3.5) {$+$} 
    child [sibling
    distance=10mm]{node {$\bar{x}$}} 
    child [sibling
    distance=10mm] {node{$\bar{y}$}}
    child [sibling
    distance=10mm] {node{$\bar{z}$}};
  \node at (1,0) {$\bowtie$}
  child {node{$+$} child {node {$\ox$}
             child {node {$(\bar{n}_0)$}}
             child {node (baro){$(\bar{o}_0)$}}}}
    child [sibling
    distance=15mm]{node {$\lor$} child {node{$*$}} child
                                 {node{$(y)$}}};
  \node at (4,0) {$\bowtie$}
                                 child [sibling
    distance=15mm] {node{$+$} child {node {$\ox$}
             child {node {$(\bar{n}_1)$}}
             child {node (baro){$(\bar{o}_1)$}}}}
                       child [sibling
    distance=15mm]{node {$\lor$} child {node{$(x)$}}
                                  child {node{$*$}}};
    \node at (6,3.5) {$+$}     child {node {$\ox$} 
      child {node{$(\bar{v})$}}
      child {node{$(\bar{w})$}}};
  \node at (9,0) {$+$}
        child {node {$\ox$}
                child [sibling
    distance=25mm]{node {$+$}
                   child [sibling
    distance=10mm] {node {$\ox$}
                           child {node {$(z)$}}
                           child [sibling
    distance=10mm] {node {$+$} child {node(m){$v$}}child {node(m){$w$}}}}}
                child [sibling
    distance=25mm]{node {$\lor$}
                   child [sibling
    distance=15mm]{node {$+$} child [sibling
    distance=10mm]{ node(n){$n_0$}}child [sibling
    distance=10mm]{node(m){$n_1$}}}
                   child [sibling
    distance=15mm] {node {$+$} child [sibling
    distance=10mm] {node(o){$o_0$}} child [sibling
    distance=10mm]{node(m){$o_1$}}}}};
\end{tikzpicture}
\]
This net contains two examples of our final kind of cut: a cut against
default weakening.  This situation superficially resembles the
a cut between the \emph{additive} propositional connectives in sequent
calculus.  In common with the reduction for such a cut, we delete a subproof (here
subnet) of the proof.  Unlike the additive reduction, we must replace
the conclusions of the deleted subnet by weakenings: the catch here is
that, to ensure that each weakening thus created is a default
weakening, each weakened subtree must either be a component of a
nontrivial expansion or of a disjunctive term which is not already a
default weakening.
\begin{proposition}[Structural cut -- default weakening]
\label{def:structcutdefweak}
Let \[G = F, (s_1 \ox s_2)\bowtie (t \lor *)\] be an AC typed forest.
Let $E = u_1, \dots, u_n, s_2$ be a subnet of $G$, such that each tree
$u_i$ is either a root of $G$, a successor of an expansion containing
at least one term not in $E$, or is the successor of a disjunction
node the other successor of which is neither an instance of $*$ nor in
$E$.  Let $F'$ be the forest derived from $F$ as follows: if $u_i$ is
a root of $F$, delete it: otherwise, replace it by $*$.  Then
\[G' = F', s_1 \bowtie 
t\] 
is a default attached AC forest, and the type of $G'$ is a
subsequent of the type of $G$.
\end{proposition}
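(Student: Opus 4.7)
The plan is to realize this cut-reduction as a subnet replacement via Proposition~\ref{prop:subnetfornetcomp}. I would first isolate the subnet to be replaced: let
\[ H = E \cup \{\, (s_1 \ox s_2),\ (t \lor *),\ *,\ (s_1 \ox s_2)\bowtie(t \lor *)\,\}, \]
so that the non-cut roots of $H$ are exactly the $u_i$ and its single cut root is the given cut. The first nontrivial task is to show $H$ is a subnet of $G$. Switching paths between two $u_i$'s stay in $E \subseteq H$ by the subnet property of $E$. A path from some $u_i$ to the cut can enter the cut only through its two unswitched neighbours $(s_1 \ox s_2)$ and $(t \lor *)$; entry via $s_2$ is immediate from the subnet property of $E$, while any alternative entry route (via $s_1$ or via $t$), combined with the canonical internal route $u_i \leadsto s_2 \to (s_1 \ox s_2) \to \mathrm{cut} \to (t \lor *)$ already present in $H$, yields a switching cycle in $G$ by a standard concatenation argument, contradicting $AC$-correctness.

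Next I would apply Proposition~\ref{prop:subnetfornetcomp} with the replacement $J' = \{\, s_1 \bowtie t\,\}$. Since $J'$ has no non-cut roots, the injection $f$ required by the proposition is empty, and every $u_i$ is a weak node. The hypothesis on the $u_i$'s in our statement matches precisely the weak-node condition of Proposition~\ref{prop:subnetfornetcomp}: each $u_i$ is either a root of $F$, or the successor of a switched node (expansion or disjunction) with at least one other successor not in $E$, hence non-weak. That $J'$ is itself $AC$ follows because the new connection $s_1 \to \mathrm{cut} \to t$ parallels the unswitched path $s_1 \to (s_1 \ox s_2) \to \mathrm{cut} \to (t \lor *) \to t$ already present in $G$, so no new switching cycle can arise. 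Applying the proposition, the result $G'$ of replacing $H$ by $J'$ is an $AC$ typed forest whose type is a closure of the type of $G$.

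Finally I would verify default attachment and the subsequent claim. The only new instances of $*$ in $G'$ arise from those $u_i$ that are successors of a disjunction node whose other successor is neither $*$ nor in $E$; that other successor is therefore retained in $G'$ and is not itself $*$, so the new $*$ sits in a default-weakening position, and default attachment is preserved. Since the non-cut roots of $G$ are the roots of $F$ and the transformation only deletes root-level $u_i$'s (modifications to $u_i$'s inside expansions or disjunctions are internal to the relevant tree and do not alter its root-level type), the type of $G'$ is a sub-multiset of that of $G$, i.e.\ a subsequent. The main obstacle is the first step: ruling out the spurious switching paths that exit $E$ and re-enter $H$ via $s_1$ or via $t$, for which the cycle-by-concatenation argument using $AC$-correctness of $G$ is essential.
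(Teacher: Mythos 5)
Your overall strategy --- isolate a subnet containing the cut and invoke Proposition~\ref{prop:subnetfornetcomp} --- is the same as the paper's, but your choice of subnet has a genuine defect. The set $H = E \cup \{(s_1 \ox s_2), (t \lor *), *, (s_1 \ox s_2)\bowtie(t\lor *)\}$ is forced, by the subforest condition, to contain the entire trees $s_1$ and $t$; but nothing forces the duals of the wire variables occurring in $s_1$ and $t$ to lie in $H$. For instance, if $s_1 = (x)$ and $\bar{x}$ sits in some unrelated root of $F$, then $H$ is not closed under axiom links, hence is not a substructure, and the question of whether it is a subnet does not even arise --- so Proposition~\ref{prop:subnetfornetcomp} cannot be applied to it. The same defect infects your replacement: $J' = s_1 \bowtie t$ in isolation is not a well-formed typed forest, since its wire variables do not occur in dual pairs. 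The paper's proof avoids exactly this by letting $L, s_1$ and $M, t$ be the \emph{kingdoms} of $s_1$ and $t$, taking the subnet to be $L, M, u_1, \dots, u_n, (s_1\ox s_2)\bowtie(t\lor *)$, and replacing it by $L, M, s_1\bowtie t$ --- the kingdoms are carried along on both sides, which restores closure under axiom links and makes the replacement a legitimate $AC$ typed forest.

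A secondary weak point: your cycle-by-concatenation argument for ruling out spurious entries into the cut presumes a ``canonical internal route'' $u_i \leadsto s_2$ inside $E$. Since $E$ is a subnet of a net with $\mix$, it may be disconnected, and the subnet property only says that switching paths between roots of $E$, \emph{if they exist}, stay inside $E$; it does not supply one. Your verification of default attachment and of the subsequent claim at the end is fine and matches the paper's, but the proof needs to be rebuilt around the kingdom-augmented subnet before those parts can stand.
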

\begin{proof}
  Let $L, s_1$ and $M, t$ be the kingdoms of $s_1$ and $t$
  respectively.  Then \[L, M, u_1, \dots, u_n, (s_1 \ox s_2)\bowtie (t
  \lor *)\] is a subnet of $G$, and $L, M, s_2 \bowtie t$ is an $AC$
  forest.  The forest $F', s_1 \bowtie t$ is therefore $AC$ correct:
  it is default-attached, since every non-root term replaced by $*$ is
  either in a non-trivial expansion or forms an attached weakening.
\end{proof}

\begin{remark}
  There are two nets we can canonically choose to delete which satisfy
  the conditions on $E$ above: namely the empire and the contiguous
  empire of $s_2$.  This follows immediately from the definitions of
  (contiguous) empire.  Our strategy for cut-elimination will always
  delete the contiguous empire, for the following reason. Consider the
  following rule instance in $\LKE$:
\[
\begin{prooftree}
F, (s_1 \ox s_2)\bowtie (t \lor *) \qquad G
\justifies
F, G, (s_1 \ox s_2)\bowtie (t \lor *)
\using \mix
\end{prooftree}
\]
The empire of $s_2$ changes after application of the rule, while the
contiguous empire stays the same.  If we delete the empire of $s_2$,
then it matters in which subproof we perform the reduction, while
deleting the contiguous empire is independent of that choice. Thus,
deleting $ce(s_2)$ is more \emph{compositional} than deleting
$e(s_2)$, since the result depends less on the context in which the
reduction takes place.

The reduction thus defined is not, however, entirely compositional: If
$u:A$ is in the contiguous empire of $s_2$ in $F, u:A, (s_1 \ox
s_2)\bowtie (t \lor *)$, then before cut reduction we can form a
conjunction on $A$, and afterwards we cannot.  This problem is,
however, not so drastic; if instead we reduce the cut after
introducing the conjunction, in addition to what was deleted before,
we also delete the contiguous empire of the other conjunct, which
becomes part of the contiguous empire of $s_2$.  In other words, the
part of the proof which could not be introduced via \mix\ will in any
case be deleted after cut-reduction.
\end{remark}

In our running example, the contiguous empire of $(\bar{n}_0)$ is the 
forest $(\bar{n}_0), n_0$, and the  contiguous empire of $(\bar{o}_1)$ is the 
forest $(\bar{o}_1), o_1$.  The resulting cut-free net, after the
structural reductions and a number of logical reductions, is

\[
    \begin{tikzpicture}[level distance=6.5mm, grow=up, sibling
    distance=10mm, edge from parent path= {[<-](\tikzparentnode) to
      (\tikzchildnode)}]]
  \node at (-4, 0) {$+$} 
    child [sibling
    distance=10mm]{node {$\bar{x}$}} 
    child [sibling
    distance=10mm] {node{$\bar{y}$}}
    child [sibling
    distance=10mm] {node{$\bar{z}$}};

    \node at (-2,0) {$+$}     child {node {$\ox$} 
      child {node{$(\bar{v})$}}
      child {node{$(\bar{w})$}}};
  \node at (1,0) {$+$}
        child {node {$\ox$}
                child [sibling
    distance=25mm]{node {$+$}
                   child [sibling
    distance=10mm] {node {$\ox$}
                           child {node {$(z)$}}
                           child [sibling
    distance=10mm] {node {$+$} child {node(m){$v$}}child {node(m){$w$}}}}}
                child [sibling
    distance=25mm]{node {$\lor$}
                   child [sibling
    distance=15mm]{node {$+$} child [sibling
    distance=10mm]{ node(n){$x$}}}
                   child [sibling
    distance=15mm] {node {$+$} child [sibling
    distance=10mm] {node(o){$y$}}}}};
\end{tikzpicture}
\]

\subsection{Cut-elimination theorem for expansion nets}
\label{sec:cut-elim-theor}
The core of cut-elimination is the following lemma, which states that
a single ``topmost'' cut can be removed from an expansion-net. Topmost
is here defined using the relation $\ll$: given two cuts $X$ and $Y$,
if $X \ll Y$ then $X$ is in the kingdom of $Y$: thus, a cut $Z$ which
is minimal among the cuts of $F$ with respect to $\ll$ is not in the
kingdom of any other cut, and so there is at least one
sequentialization of the net such that the proof above $Z$ is
cut-free. Furthermore, the lemma states that this topmost cut can be removed
in such a way that duplications happen only within the kingdom of the
cut: that is, the cut is eliminated by replacing the kingdom of the
cut with a cut free $AC$ forest, plus some supplementary deletions.
\begin{lemma}[Principal lemma for default-attached nets]
\label{lem:prinlem}
  Let $G = F, t\bowtie_X s$ be an AC forest containing $n+1$ cuts, and
  let the cut $\bowtie_X$ be $\ll$-minimal among the cuts in $G$. By
  applying the transformations in Propositions~\ref{def:logcutatomic},
  \ref{def:logcutandor},~\ref{def:structcutcontr} and
  \ref{def:structcutdefweak} to $G$, we can obtain an $AC$ forest
  $G'$, containing $n$ cuts, such that
  \begin{enumerate}
  \item $G$ and $G'$ only differ on the part of $G$ disjoint from the
    contiguous empire of $X$ in $G$.
  \item Outside of the kingdom of $X$ in $G$, $G$ and $G'$ only differ
    by the deletion of subtrees or their replacement with $*$.
    \end{enumerate}
The type of $G'$ is a closure of the type of $G$.
\end{lemma}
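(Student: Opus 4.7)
The plan is to eliminate the cut $\bowtie_X$ by strong induction on the lexicographic pair (rank of the cut-formula, positive width of the cut). The crucial hypothesis at every step is that $\bowtie_X$ is $\ll$-minimal, so $k(X)$ contains no cuts other than $\bowtie_X$ itself; in particular, the kingdom $k(t)$ of the negative cut-term has only witnesses as non-$t$ roots, which is exactly the side-condition demanded by Proposition~\ref{def:structcutcontr}.

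If the positive term has positive width greater than one, I write it as $s_1 + s_2$ (with $s_1$ a single witness) and apply the contraction reduction of Proposition~\ref{def:structcutcontr} with duplicated subnet $k(t)$. The result is two cuts $s_1 \bowtie t^L$ and $s_2 \bowtie t^R$ of strictly smaller positive width. Since all structural changes happen inside (a copy of) the old $k(X)$, both new cuts remain $\ll$-minimal in the new forest, and two applications of the induction hypothesis remove them in turn, bringing the total number of cuts from $n+1$ down to $n$. For the width-one logical cuts I argue by case on the cut-formula: an atomic cut is dispatched immediately by yanking (Proposition~\ref{def:logcutatomic}); a conjunctive cut against a genuine disjunction is split by Proposition~\ref{def:logcutandor} into two cuts of strictly smaller rank, each handled by the induction hypothesis; and a conjunctive cut against a default weakening $(u \lor *)$ or $(* \lor u)$ is reduced by Proposition~\ref{def:structcutdefweak} (deleting the contiguous empire of the non-starred component) to a single cut of strictly smaller rank, again handled inductively. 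In every case the freshly created cuts sit inside the old $k(X)$, so $\ll$-minimality is preserved and the induction is legitimate.

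The main obstacle, and the step I would spend most care on, is verifying the locality conditions~(1) and~(2) across the \emph{iterated} reductions. For a single elementary step they follow from the shape of the reduction itself: yanking and the $\land/\lor$ logical reduction only rearrange material inside $k(X)$; the contraction reduction replaces the subnet $J \subseteq k(X)$ by a new subnet $J'$ of the same external interface; and the default-weakening reduction removes (or $*$-replaces) subtrees of $ce(X)$ lying outside $k(X)$, which is exactly the kind of modification that~(2) permits. To close the argument I would run a secondary induction on the reduction sequence, carrying the invariant that at every intermediate stage the current forest differs from $G$ only inside $ce(X)$ of $G$, with the stronger restriction (deletion or $*$-replacement) holding outside $k(X)$ of $G$. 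Since every new cut produced along the way lies inside the original $ce(X)$, and since the contiguous empire cannot grow under these reductions, this invariant is maintained; together with the fact that the type of each intermediate forest is a closure of the type of $G$ (established locally by each of the four cited propositions), this yields both the promised reduction in cut count and the claimed locality conditions.
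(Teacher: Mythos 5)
Your overall strategy coincides with the paper's: the same double induction on (rank, positive width), the same case split, the same use of $\ll$-minimality to guarantee that the duplicated subnet has only witnesses (and no cuts) as extra roots. But there is a genuine gap at the heart of the argument, namely the justification of the \emph{second} application of the induction hypothesis after a reduction step has produced two cuts. When you eliminate the first of the two new cuts, the induction hypothesis only guarantees that material \emph{outside its kingdom} is merely deleted or $*$-replaced; material \emph{inside} its kingdom may be duplicated. So you must rule out that the second cut, or the witnesses of its positive term, lie in the kingdom of the first: otherwise eliminating the first cut can duplicate the second (so the cut count never reaches $n$) or increase its positive width (so the measure $(\mathrm{rank},\mathrm{width})$ fails to decrease and the inner induction is illegitimate). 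Your invariant about everything staying inside the original $ce(X)$ does not address this, since a duplication confined to $ce(X)$ still breaks the count.

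The paper closes this gap with two devices you omit. In the contraction case it does not split off an arbitrary witness but a $\ll$-\emph{maximal} one, $w_1$, and then eliminates the wide cut $Z = t^R \bowtie (w_2+\cdots+w_m)$ \emph{first}; the maximality of $w_1$ guarantees $w_1 \notin k(Z)$, so the width-one cut against $(w_1)$ is not duplicated and its positive width is unchanged when the induction hypothesis is next applied to it. In the $\land/\lor$ case the paper explicitly argues $Z \notin k(Y)$ and $Y \notin k(Z)$ (so both remain $\ll$-minimal, and eliminating $Y$ can delete but not duplicate $Z$). Your proposal needs both the specific choice of witness and the specific order of elimination, together with the kingdom-disjointness argument, to make ``two applications of the induction hypothesis'' legitimate.
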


\begin{proof}
  By induction on the rank of the cut-formula, with a sub-induction on
  the positive width of the cut.  Suppose first that the cut-formula
  is atomic, and that the cut has the form $x \bowtie_X t$.  The
  kingdom of $X$ is $H, \bar{x}, x \bowtie_X t$, and the atomic
  cut reduction replaces this subnet by $H, t$. Nothing outside the
  kingdom of $X$ is changed.

  Now assume, as an induction hypothesis, that the lemma holds for a
  $\ll$-minimal cut of rank $n$ and positive width $m$.  Suppose first
  that the cut has the form $(t \lor *) \bowtie (s_1 \otimes s_2)$.
  To reduce this cut, we delete $ce(s_2)$, the contiguous empire
  of $s_2$.  After one step of reduction, we obtain an $AC$ forest $F'
  = G', t \bowtie_Y s_1$; the roots of $k(Y)$/$ce(Y)$ are contained
  within the roots of $k(X)$/$ce(X)$.  Apply the induction hypothesis
  to $F'$ to obtain a cut-free expansion net with the required
  properties.

  Now, suppose that the cut has the form $(t_1 \lor t_2) \bowtie_X
  (s_1 \otimes s_2)$.  After one step of cut-reduction, we obtain the
  $AC$ forest $F, t_1 \bowtie_Y s_1, t_2\bowtie_Z s_2$.  Note that $Z
  \notin k(Y)$ and $Y \notin k(Z)$, and so both $Z$ and $Y$ are
  $\ll$-minimal; also note that if $u$ is a root of $k(Y)$ not equal
  to $Y$ (or of $k(Z)$ not equal to $Z$), then $u$ is contained in a
  root of $k(X)$.  Apply the induction hypothesis to one of the cuts,
  without loss of generality $Y$.  The important thing to note is that, since $Z$ is not in
  $k(Y)$, it is not duplicated by eliminating $Y$, though it may be
  deleted, since it is in $ce(Y)$.  If it is deleted, we are done:
  otherwise, apply the induction hypothesis a second time to $Z$.

  Finally, suppose that the cut has the form $t \bowtie_X s$, where
  $s=(w_1 + w_2 + \cdots + w_m)$.  Since $\ll$ is a partial order on
  the nodes $F$, at least one of these witnesses will be
  $\ll$-maximal.  Suppose, without loss of generality, that $w_1$ is
  $\ll$-maximal among the $w_i$'s.  Then apply the duplication
  reduction to the kingdom of $X$, with the decomposition $s=
  (w_1)+(w_2+\dots +w_m)$; we obtain an $AC$ forest $G', t^L \bowtie_Y
  (w_1), t^R \bowtie_Z (w_2 + \dots+w_m)$. Now apply the induction
  hypothesis to $Z$, to obtain an $AC$ forest $G'', t' \bowtie
  (w'_1)$: crucially, since $w_1$ was not in $k(Z)$, the positive
  width of this cut does not change after $Z$ is eliminated.  We may
  thus apply the induction hypothesis again to complete the proof.
\end{proof}

\begin{theorem}[Cut elimination]
\label{thm:cut-elim}
  If $F$ is an expansion net with type $\G$, there is a cut-free
  expansion net $F'$, reachable by the cut-reduction operations from
  $F$, such that the type $\D$ of $F'$ is a subsequent of $\G$.
\end{theorem}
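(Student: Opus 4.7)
The plan is to iterate the Principal Lemma (Lemma~\ref{lem:prinlem}), which eliminates a single $\ll$-minimal cut, by induction on the number of cuts in $F$.

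The base case is immediate: if $F$ already has no cuts, then $F' = F$ and $\D = \G$ does the job. For the inductive step I assume $F$ has $n+1 \geq 1$ cuts. Each cut is a non-atomic proper node of $F$ (a binary node $t\bowtie s$), so by Proposition~\ref{prop:kingdomordering} the relation $\ll$ restricts to a genuine partial order on the finite set of cuts of $F$, and hence a $\ll$-minimal cut $X$ exists. I then apply the Principal Lemma at $X$: it supplies a sequence of applications of the reductions in Propositions~\ref{def:logcutandor}, \ref{def:logcutatomic}, \ref{def:structcutcontr} and \ref{def:structcutdefweak} producing an $AC$-correct typed forest $F_1$ with exactly $n$ cuts, whose type is a closure of $\G$.

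The subtle point is that the induction hypothesis requires $F_1$ to be an expansion net with cut (not merely an $AC$ typed forest) and its type to be a \emph{subsequent} of $\G$ rather than only a closure. I plan to handle both requirements simultaneously by inspecting the four basic reductions and observing that none of them can introduce a non-cut root of witness type. The $\land/\lor$ reduction simply replaces one cut root by two cut roots; the atomic yanking splices witnesses inside containing expansions and leaves the non-cut roots otherwise intact; the contraction reduction duplicates material strictly inside the kingdom of the cut, amounting to splitting witnesses inside already-existing expansions; and the default-weakening reduction only deletes non-cut roots or replaces subtrees with $*$ inside disjunctions or expansions. Since $F$ is an e-annotated sequent (no non-cut root of witness type), neither is $F_1$, so combined with $AC$-correctness $F_1$ is an expansion net with cut. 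Moreover, because $\G$ itself contains no witness types, any closure of $\G$ is simply a sub-multiset of $\G$ (see the remark following the definition of closure), so the type of $F_1$ is a subsequent of $\G$.

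Applying the induction hypothesis to $F_1$ now yields a cut-free expansion net $F'$ whose type $\D$ is a subsequent of the type of $F_1$, and hence of $\G$. The hard part of this argument is the invariant sketched in the previous paragraph — that the class of expansion nets with cut is stable under the four basic reductions, and that the ``closure'' relation therefore specialises to ``subsequent'' at the top level — after which iteration of the Principal Lemma is routine.
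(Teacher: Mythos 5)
Your proposal is correct and follows essentially the same route as the paper: iterate the Principal Lemma on a $\ll$-minimal cut until no cuts remain, then observe that since the conclusion type $\G$ contains no witness types, every closure of it is just a subsequent, so the resulting cut-free $AC$-correct e-annotated sequent is an expansion net. The paper's own proof is terser (it does not spell out the invariant that the intermediate forests remain e-annotated sequents), but the content is the same.
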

\begin{proof}
  By successive applications of the principal lemma, we can remove all
  the cuts from $F$, the result being an AC typed forest whose type
  $\Delta$ is a closure of a subsequent of the type of $F$: since
  the closure of a classical sequent is just the sequent itself,
  $\Delta$ is a subsequent of the type of $F$, and so $F'$ is an
  expansion-net.
\end{proof}

\section{Conclusion}
\label{sec:conclusion}

Expansion-nets provide a class of abstract proof objects for classical
propositional logic which satisfy our checklist of good
properties. There is a sequent calculus ($\LKvar$) with a canonical
function from proofs in that calculus to expansion-nets (given in
Definition~\ref{def:exnets}) There is a correctness criterion
(Definition~\ref{def:correctness}) which can be checked in polynomial
time, such that the correct proof structures are precisely the
expansion nets.  We have sequentialization into $\LKvar$
(Theorem~\ref{thm:seq}), and weakly normalizing cut-elimination
directly on expansion-nets (Theorem~\ref{thm:cut-elim}).  The last two
of these results are new to the paper (although the former was
sketched in~\cite{McK10exnets}); their proofs rely on the
characterization of subnets of expansion nets, including the new
notion of contiguous subnet defined in this paper. In addition to these
properties, expansion-nets also identify a more natural set of sequent
derivations than do the previously existing notions of abstract proof.

We mention some further directions:

\subsubsection*{Beyond propositional logic}

The terminology $expansion$ deliberately recalls
Miller~\cite{Mil87ComRep}, whose \emph{expansion tree proofs} can be
seen as a prototype notion of proof-net for classical logic.  The
paper~\cite{McK11ProNeHer} makes this connection explicit in the case of
first-order prenex formulae; the paper introduces a notion of
\emph{Herbrand net} using Girard's notion of a quantifier jump, in
which provability at the propositional level is treated as trivial ---
propositional axioms are replaced by arbitrary propositional
tautologies. Expansion-tree proofs themselves do not provide a good
notion of proof-net when we move beyond sequents of prenex formulae:
they lack the fine-grained propositional structure of expansion-nets
and so do not seem to have well-behaved cut-elimination.  However, we
foresee no major obstacles in combining Herbrand nets with the results
of the current paper to capture nets for first- or higher-order
classical quantifiers, including cut-elimination.

\subsubsection*{Nets for additively formulated classical logic}
The correctness/sequentialization results for our nets are heavily
tied to the multiplicatively formulated sequent calculus.  It is, of
course, possible to extract an ed-net from a proof in an additively
formulated calculus, but there are natural identities in those calculi
which are not validated by our nets. Taking the view that the additive
classical connectives are essentially different operations (that happen
to coincide at the level of provability), we look for natural notions
of proof net for additively formulated classical logic.

\bibliography{nets}

\end{document}